\documentclass{amsart}

\usepackage{amssymb,amsfonts,amscd}
\usepackage{enumerate}

\usepackage{hyperref}
\usepackage[nameinlink]{cleveref}

\theoremstyle{plain} 
\newtheorem{thm}{Theorem}[section]
\newtheorem{cor}[thm]{Corollary}
\newtheorem{prop}[thm]{Proposition}
\newtheorem{lem}[thm]{Lemma}

\theoremstyle{definition}
\newtheorem{defn}[thm]{Definition}

\newtheorem{exmps}[thm]{Examples}

\newtheorem*{ack*}{Acknowledgements}

\theoremstyle{remark}
\newtheorem{rem}[thm]{Remark}

\makeatletter
\let\c@equation\c@thm
\makeatother
\numberwithin{equation}{section}

\newcommand{\deriv}[2]{\frac{\partial #1}{\partial #2}}
\newcommand{\tanv}[1]{\frac{\partial}{\partial #1}}

\renewcommand{\tilde}{\widetilde}

\DeclareMathOperator{\End}{End}
\DeclareMathOperator{\Sym}{Sym}
\DeclareMathOperator{\tr}{tr}
\DeclareMathOperator{\Id}{Id}
\DeclareMathOperator{\av}{av}
\DeclareMathOperator{\Scal}{Scal}
\DeclareMathOperator{\ad}{ad}
\DeclareMathOperator{\Diff}{Diff}
\DeclareMathOperator{\GL}{GL}

\DeclareMathOperator{\Real}{Re}
\DeclareMathOperator{\orb}{orb}
\DeclareMathOperator{\ev}{ev}
\DeclareMathOperator{\ord}{ord}

\title{Orbifold Bergman Kernels}

\begin{document}

\begin{abstract}
Let $(\mathcal{X}, \omega)$ be a compact $n$-dimensional K\"ahler orbifold, the stabilizer groups of which are abelian and have rank at most two. Let $\mathcal{E}$ be an orbi-ample vector bundle of rank $2$ over $\mathcal{X}$ and let $H$ be a Hermitian metric on $\mathcal{E}$ such that the curvature form of $\det H$ is $-2\pi \sqrt{-1} \omega$. We show that a certain weighted sum of Bergman kernels for $\Sym^i \mathcal{E} \otimes \det(\mathcal{E})^{k+j}$ as $i$ and $j$ vary over a finite set admit an asymptotic expansion. This extends a similar result for cyclic K\"ahler orbifolds \cite{RT}. 

\end{abstract}

\author{Shin Kim}
\address{Department of Mathematics, Statistics, and Computer Science, University of Illinois Chicago, 
Chicago, IL 60607}
\email{skim651@uic.edu}

\author{Julius Ross}
\address{Department of Mathematics, Statistics, and Computer Science, University of Illinois Chicago, 
Chicago, IL 60607}
\email{juliusro@uic.edu}

\maketitle

\section{Introduction}

Let $(\mathcal{X}, \omega)$ be a compact $n$-dimensional K\"ahler orbifold with cyclic singularities and let $\mathcal{L}$ be an orbifold line bundle over $\mathcal{X}$ equipped with a Hermitian metric $h$ such that the curvature form of $h$ is $-2\pi \sqrt{-1} \omega$. The Hermitian metric $h$ and the K\"ahler form $\omega$ induce an $L^2$-inner product on the space of global sections $H^0(\mathcal{X}, \mathcal{L}^k)$. The Bergman function is the smooth function
\begin{equation*}
B_k(x) = \sum_\alpha \|s_\alpha(x)\|_{h^k(x)}^2,
\end{equation*}
where $\{s_\alpha\}$ is an orthonormal basis of $H^0(\mathcal{X}, \mathcal{L}^k)$.

In the case when $(\mathcal{X},\omega)$ is a compact K\"ahler manifold, the works of Tian \cite{Tian}, Zelditch \cite{Z}, Catlin \cite{Cat}, Ruan \cite{R}, Berman-Berndtsson-Sj\"ostrand \cite{BBS}, Dai-Liu-Ma \cite{DLM}, and Liu-Lu \cite{LL} show that the Bergman function admits an asymptotic expansion as $k$ tends to infinity. Moreover,  the first nontrivial coefficient in this asymptotic expansion equals half the scalar curvature of $\omega$, making the Bergman function a central tool in the study of K\"ahler metrics of constant scalar curvature (\cite{Don, Luo, Mab, PhS, Wang2, Yau, Zhang}). If $(\mathcal{X},\omega)$ is a K\"ahler orbifold, then the expansion of $B_k$ over the orbifold locus is not smooth (e.g.\ \cite{DLM}), but in \cite{RT} Ross-Thomas show that, on K\"ahler orbifolds with cyclic singularities, certain \emph{weighted sums} of the Bergman functions associated to different powers of $\mathcal{L}$ admit a global smooth expansion under the following mild condition.

\begin{defn} (\cite{FL}, \cite{RT})
Let $\mathcal{E}$ be an orbifold vector bundle over $\mathcal{X}$. Then, $\mathcal{E}$ is said to be \textit{locally ample} if for all points $x\in X$ the representation of the orbifold isotropy group $G_x$ on $\mathcal{E}_x$ is faithful. We say $\mathcal{E}$ is \textit{orbi-ample} if it is locally ample and the orbifold line bundle $\mathcal{L} = \text{det}(\mathcal E)^{\text{ord}(X)}$ descends to an ample line bundle on the underlying space of $\mathcal{X}$.
\end{defn}

\begin{thm} 
\textnormal{(\cite{RT}, Theorem 1.7)} \label{thm: cyclic} Let $(\mathcal{X}, \omega)$ be a compact $n$-dimensional K\"ahler orbifold with cyclic singularities and let $\mathcal{L}$ be an orbi-ample line bundle over $\mathcal{X}$. Let $h$ be a Hermitian metric on $\mathcal{L}$ such that the curvature form of $h$ is $-2\pi \sqrt{-1} \omega$. Fix nonnegative integers $N,p \geq 0$ and suppose that $\{c_i\}$ are finitely many positive constants chosen so that 
\begin{equation*}
\frac{1}{\ord(\mathcal{X})} \sum_i i^\ell c_i = \sum_{i \equiv u  \text{ \rm mod} \ord(\mathcal{X})} i^\ell c_i
\end{equation*}
for all $u$ and $\ell = 0, \dotsc, N+p$. Then, the function
\begin{equation*}
B_k^{\orb} := \sum_i c_i B_{k+i}
\end{equation*}
admits a global $C^p$-expansion of order $N$.
That is, there exist smooth functions $b_0, \dotsc, b_N$ on $\mathcal{X}$ such that
\begin{equation*}
B_k^{\orb} = b_0 k^{n} + b_1 k^{n-1} + \dotsb + b_N k^{n-N} + O(k^{n-N-1}),
\end{equation*}
where the $O(k^{n-N-1})$ term is to be taken in the $C^p$-norm. Furthermore, the $b_j$ are universal polynomials in the constants $c_i$ and the derivatives of the K\"ahler metric $\omega$. In particular
\begin{equation*}
b_0 = \sum_{i} c_i \hspace{10pt} \text{ and } \hspace{10pt} b_1 = \sum_i c_i \left ( ni + \frac{1}{2} \Scal_\omega \right ),
\end{equation*}
where $\Scal_\omega$ is the scalar curvature of $\omega$.

\end{thm}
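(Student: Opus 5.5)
Away from the orbifold locus, and uniformly on compact subsets of the smooth part, the classical manifold expansion (Tian, Zelditch, Catlin, Dai--Liu--Ma) applies to each $B_{k+i}$ separately. Writing $B_{k+i} = \sum_r a_r (k+i)^{n-r}$ and expanding each $(k+i)^{n-r}$ binomially in $k$ produces an expansion of $\sum_i c_i B_{k+i}$ whose coefficients are polynomials in the $c_i$, the $i$, and the manifold Bergman coefficients. In particular $b_0=\sum_i c_i$ and $b_1 = \sum_i c_i(ni + a_1)$ with $a_1=\tfrac12\Scal_\omega$. The real content is therefore near an orbifold point. Fix such a point $x$ with isotropy group $G_x\cong\mathbb{Z}/m$, where $m \mid \ord(\mathcal{X})$, and an orbifold chart $\tilde U\to U=\tilde U/G_x$. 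I would follow the Dai--Liu--Ma description of the orbifold Bergman kernel. Lift to $\tilde U$; the kernel on the orbifold is the $G_x$-average of the lifted smooth-manifold-type kernel:
\[
B_k(y) = \sum_{g\in G_x} (g\cdot)\,\tilde P_k(g^{-1}\tilde y,\tilde y) + O(k^{-\infty}).
\]
The $g=1$ term has the usual smooth expansion. For $g\neq 1$ the term is $O(k^{-\infty})$ away from $\tilde x$, and near $\tilde x$ it has the form $k^n\, F_g(\sqrt{k}\,\tilde y)$, which decays like $e^{-ck|\tilde y - g\tilde y|^2}$. This non-smooth contribution is what must cancel.

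The key point is how $g$ acts on the fibre $\mathcal{L}^{k}_x$. Since $\mathcal{L}$ is locally ample, a generator $g$ acts by a primitive $m$-th root of unity $\zeta$, so on $\mathcal{L}^{k+i}$ it acts by $\zeta^{k+i}$. The lifted kernel for $\mathcal{L}^{k+i}$ then has its $g$-contribution equal to $\zeta^{g(k+i)}$ times a universal off-diagonal kernel expansion $\sum_r (k+i)^{n-r/2}\,\Phi_{g,r}(\sqrt{k+i}\,\tilde y)$, which comes from the Dai--Liu--Ma model computation. Summing with weights gives
\[
\sum_i c_i \zeta^{g(k+i)}(k+i)^{n-r/2}\Phi_{g,r}\big(\sqrt{k+i}\,\tilde y\big).
\]
Next I would Taylor-expand in $i$ about $k$, treating $\sqrt{k+i}=\sqrt{k}\,(1+i/k)^{1/2}$ inside $\Phi$. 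This writes the sum as $\sum_\ell \big(\sum_i c_i\zeta^{gi}i^\ell\big)$ times explicit functions of $k$ and $\sqrt k\tilde y$ of decreasing order.

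The hypothesis says that $\sum_i c_i i^\ell$ splits evenly among the residue classes modulo $\ord(\mathcal X)$, hence also modulo $m$. It follows that $\sum_i c_i \zeta^{gi} i^\ell = \big(\sum_i c_i i^\ell/\ord\big)\cdot\sum_{u}\zeta^{gu}=0$ for $\ell\le N+p$, by the vanishing of sums of nontrivial roots of unity. All non-identity contributions therefore vanish up to a remainder of order $k^{n-N-1}$. Each $C^p$ derivative costs at most $\sqrt k$ per derivative, because $\Phi$ depends on $\sqrt k\tilde y$; this is why $N+p$ moments are required and not just $N$.

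The main obstacle is making the remainder estimate uniform in $C^p$: one has to check that the Taylor remainder in $i$ of $(k+i)^{n-r/2}\Phi_{g,r}(\sqrt{k+i}\,\tilde y)$, together with $p$ derivatives in $\tilde y$, is bounded by $k^{n-N-1}$ uniformly in $\tilde y$, using the Gaussian decay of $\Phi_{g,r}$ and its derivatives. I would do this by the crude bound $\partial_i^{\ell}\Phi(\sqrt{k+i}\,\tilde y) = O(k^{-\ell}(1+|\sqrt k\tilde y|)^{C})$, the power being absorbed by the Gaussian decay. The identity-term expansion then supplies the smooth coefficients $b_j$, patched globally with the smooth-part expansion. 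Universality and the formulas for $b_0,b_1$ follow as in the first paragraph.
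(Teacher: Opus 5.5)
Your route differs from the one the paper relies on. The paper cites this theorem from Ross--Thomas and adapts their argument in \Cref{local reproducing kernels} and \Cref{expansion}, using $G$-averaged Berman--Berndtsson--Sj\"ostrand local reproducing kernels rather than the Dai--Liu--Ma off-diagonal expansion. Your algebraic core is sound:
\begin{itemize}
\item local ampleness gives $\zeta_g\neq 1$ for $g\neq 1$;
\item the $g$-term for $\mathcal{L}^{k+i}$ is $\zeta_g^{k+i}$ times a profile independent of $k+i$;
\item the moment hypothesis is exactly the vanishing of $\sum_i c_i\zeta^i i^\ell$ for every nontrivial $\ord(\mathcal{X})$-th root of unity $\zeta$ and every $\ell\le N+p$.
\end{itemize}

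The gap is the uniform remainder estimate, which you yourself flag as the main obstacle. You claim that the $g$-term is $O(k^{-\infty})$ away from $\tilde x$, and that the weight $(1+|\sqrt{k}\,\tilde y|)^C$ is absorbed by Gaussian decay. Both hold only when $\tilde x$ is an isolated fixed point of $g$. The model Gaussian in $\Phi_{g,r}(w)$ involves only the components of $w$ transverse to $\mathrm{Fix}(g)$. At every point of $\mathrm{Fix}(g)$ the $g$-term has full size $k^n$. Meanwhile the polynomial prefactors of $\Phi_{g,r}$, and the weights in the Dai--Liu--Ma remainder, grow like powers of $\sqrt{k}\,|\tilde y|$ along $\mathrm{Fix}(g)$, i.e.\ like powers of $\sqrt{k}$ on any fixed neighbourhood. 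So an expansion centred at the single point $\tilde x$ is not uniform on a fixed neighbourhood once the orbifold locus is positive-dimensional. That is the typical case, e.g.\ a divisor with cone angle $2\pi/m$.

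To close the gap you need an expansion recentred along $\mathrm{Fix}(g)$, rescaling only the normal variable, with constants uniform up to deeper strata. Near a deeper stratum the recentring point has a stabilizer strictly smaller than $G_x$, and its admissible chart radius shrinks, so a multi-scale patching is required. This already happens for cyclic groups. Take two orbifold divisors of orders $2$ and $3$ crossing transversally. The stabilizer at the crossing is $\mathbb{Z}/6$, and each nontrivial element fixes either one divisor or only the point.

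Given such a uniform expansion, your Taylor argument in $i$ does go through. Each $t$-derivative of $t^{n-r/2}\Phi(\sqrt{t}\,Z)$, with $Z$ normal, gains a factor $t^{-1}$.

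The Ross--Thomas route used in the paper avoids all of this:
\begin{itemize}
\item Each $g\neq h$ term is exactly $\eta(\tilde y)^{k}$ times a polynomial in $k$, where $\eta=e^{\psi(h\cdot\tilde y,\overline{g\cdot\tilde y})-\phi(g\cdot\tilde y)}$.
\item After inserting the characters, the weighted sum is built from $(T\partial_T)^\ell\sum_i c_iT^i$ evaluated at $T=\zeta\eta$. These vanish to high order at $\eta=1$.
\item The diastasis bound $\|h\cdot\tilde y-g\cdot\tilde y\|^{2s}|\eta(\tilde y)|^{k}=O(k^{-s})$ of \Cref{lem: claim} holds on one fixed neighbourhood of $\tilde x$, whatever the geometry of the fixed loci.
\end{itemize}
No rescaling or recentring is needed.
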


For $k$ sufficiently large,
\begin{equation*}
\iota_k : \mathcal{X} \to \mathbb{P}\left ( \bigoplus_i H^0(\mathcal{X},\mathcal{L}^{k+i})^* \right )
\end{equation*}
is an embedding of $\mathcal{X}$ into a weighted projective space and $\iota_k^* \mathcal{O}(1) \cong \mathcal L$. Analogous to the case when $\mathcal{X}$ is a manifold, the asymptotic expansion of the orbifold Bergman functions has applications to the study of asymptotics of Fubini-Study metrics obtained by the embeddings $\iota_k$. For instance, \Cref{thm: cyclic} is used in \cite{RT2} to show that if $(\mathcal{X}, \mathcal{L})$ is a polarized orbifold with cyclic stabilizer groups and $\mathcal{X}$ admits an orbifold K\"ahler metric in $c_1(\mathcal{L})$ with constant scalar curvature, then $(\mathcal{X},\mathcal{L})$ is $K$-semistable.

Now, let $(\mathcal{X}, \omega)$ be a K\"ahler orbifold with stabilizer groups that are abelian and have rank at most two. In general it is not possible for the stabilizer groups to act faithfully on the fibers of an orbifold line bundle. So, we instead consider Bergman functions associated to symmetric powers $\Sym^i \mathcal{E}$ of an orbifold vector bundle $\mathcal{E}$ twisted by powers of the determinant bundle $\det(\mathcal{E})$ (this is the analytic analogy of the construction in \cite{FL}). 

Now the Bergman functions are endomorphisms of $\Sym^i \mathcal{E}$, where the powers $i$ may vary, and so they cannot directly be summed.  To form a suitable weighted sum we first take a ``trace" that is induced by the natural multiplication map that we describe now.

Let $V$ be a complex vector space and $$\mathfrak{a}_{i,k} : \End(V) \to \End(\Sym^i V)$$ denote the Lie algebra homomorphism associated to the multiplication map $$\Sym^i \otimes\,\, {\det}^{\otimes k} : \GL(V) \to \GL(\Sym^i V \otimes \det(V)^{\otimes k}) \cong \GL(\Sym^i V).$$
 We define $\tau_{i,k} : \End(\Sym^i V) \to \End(V)$ to be the dual to $\mathfrak{a}_{i,k}$ under the natural pairing, so by definition
\begin{equation} \label{eq: dual}
    \tr (\tau_{i,k}(A) B) = \tr  (A \mathfrak{a}_{i,k}(B)).
\end{equation}

Now let $\mathcal{E}$ be an orbi-ample vector bundle over $\mathcal{X}$ of rank $2$. Suppose that $H$ is a Hermitian metric on $\mathcal{E}$ such that $-2\pi \sqrt{-1} \omega$ is the curvature form of $h := \det(H)$, and set $\mathcal{L} = \det(\mathcal{E})$. We will denote by $$B_{i,k+j} \in \mathcal{A}^0(\End(\Sym^i\mathcal{E}))$$ the Bergman function for $(\Sym^i \mathcal{E} \otimes \mathcal{L}^{k+j}, \Sym^i H \otimes h^{k+j})$. Then, $\tau_{i,k+j} (B_{i,k+j})$ is an element of $\mathcal{A}^0(\End(\mathcal{E}))$ for any $i$, $j$, and $k$.

\begin{defn}
    Let $f$ be an analytic function in a neighborhood $U$ of $w \in \mathbb{C}^n$. Let $f(z) = \sum_{r=0}^\infty f_r(z)$ denote the power series expansion of $f$ at the point $w$ where $f_r$ is the $r$-th graded piece of the expansion. The function $f$ is said to have \emph{total order $r$ at the point $w$} if $f_r$ is the lowest degree term that does not vanish identically. 
\end{defn}

\begin{thm} \label{thm: main}
Fix nonnegative integers $N, p \geq 0$. Let $\zeta_{\ord(\mathcal{X})}$ be a primitive $\ord(\mathcal{X})$-th root of unity. Let $\{c_{ij}\}_{i,j \geq 0}$ be a set of nonnegative constants such that the polynomial
\begin{equation*}
\sum_{i,j \geq 0} c_{ij} \left ( \sum_{\mu \in (\mathbb{Z}_{\geq 0})^2, |\mu|=i} {T_1}^{\mu_1} {T_2}^{\mu_2} \right ){T_3}^j
\end{equation*} 
has total order at least $5N+5p+5$ at $(T_1,T_2,T_3) = ({\zeta_{\ord(\mathcal{X})}}^{m_1}, {\zeta_{\ord(\mathcal{X})}}^{m_2}, {\zeta_{\ord(\mathcal{X})}}^{m_1+m_2})$ for all $m_1$,$m_2$ $\in \mathbb{Z}$ such that $({\zeta_{\ord(\mathcal{X})}}^{m_1},{\zeta_{\ord(\mathcal{X})}}^{m_2})$ does not equal $(1,1)$. 

For all nonnegative integers $k \geq 0$, define $B_k^{\orb} \in \mathcal{A}^0(\End(\mathcal{E}))$ by
\begin{equation} \label{eq: orbifold Bergman function}
    B_k^{\orb}(x) = \sum_{i,j \geq 0} c_{ij}\tau_{i,k+j} \left ( B_{i,k+j}(x)  \right ).
\end{equation}
Then $B_k^{\orb}$ admits an asymptotic expansion of order $N$ in the $C^p$-norm.  That is, there exist smooth sections $b_0$, $\dotsc$, $b_N$ in $\mathcal{A}^0(\End(\mathcal{E}))$ such that
    \begin{equation*}
        B_k^{\orb} =  b_0 k^{n+1} + \dotsb + b_{N} k^{n-N+1} + O(k^{n - N})
    \end{equation*}
with respect to the $C^p$-norm. Moreover the $b_i$ are universal quantities in terms of $\omega$ and its derivatives; in particular
\begin{equation*}
        b_0(x) = \sum_{i,j \geq 0} c_{ij} (i+1) \Id_E
\end{equation*}
and
\begin{equation*}
    b_1(x) =  \sum_{i,j \geq 0} c_{ij} \left (nj + \frac{(i+1)(2i+1)}{2} \Scal_\omega(x) \right )\Id_E.
\end{equation*}
\end{thm}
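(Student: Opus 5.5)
The plan is to lift to the total space of the frame/projectivization and reduce to a Bergman kernel computation for a single line bundle on a (locally) manifold-like object. I will work locally on an orbifold chart $\tilde U \to \tilde U/G$ with $G$ abelian of rank at most two acting faithfully on $\mathcal{E}_x \cong \mathbb{C}^2$. After diagonalizing, $g\in G$ acts on the fibre by $(\zeta^{m_1},\zeta^{m_2})$ and on $\det$ by $\zeta^{m_1+m_2}$. First I express the Bergman function of the quotient via the invariant sections on the chart. Next I use the standard localization of Bergman kernels (the Dai--Liu--Ma off-diagonal expansion for the manifold $\tilde U$, or equivalently for the ample line bundle $\mathcal{O}_{\mathbb{P}(\mathcal{E}^*)}(1)\otimes \pi^*\mathcal{L}^{k}$ on the projectivization, which is a manifold-like cover) to write
\[
B_{i,k+j}(x)=\sum_{g\in G_x} g^*\tilde B_{i,k+j}(gx,x),
\]
where $\tilde B$ is the Bergman kernel on the smooth cover. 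The identity term gives the smooth manifold expansion. Every $g\neq 1$ contributes a term that is $O(k^{-\infty})$ away from the fixed locus but, near it, decays only like a Gaussian $e^{-ck|gx-x|^2}$. This non-smooth term is the one that must be cancelled.

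Second, I compute $\tau_{i,k+j}$ of each term. Using the identity $\tr(\tau(A)B)=\tr(A\,\mathfrak{a}(B))$, the contribution of $g$ becomes a trace over $\Sym^i$ of the action of $g$ twisted by $\det^{k+j}$. For diagonal $g$, the trace over $\Sym^i\mathbb{C}^2$ is $\sum_{|\mu|=i}T_1^{\mu_1}T_2^{\mu_2}$ evaluated at the eigenvalues, and the determinant twist contributes $T_3^{j}$ (times a $k$-dependent factor independent of $i,j$). After pairing with $B\in\End(V)$, the derivatives of the Lie algebra map appear as differential operators in $T_1,T_2$. The full off-diagonal expansion for the $g$-term, written in powers $k^{-r}$ with polynomial-in-$(i,j)$ coefficients up to order $N+p$, then takes the form of derivatives of total order $\le 5(N+p+1)$ of the generating polynomial
\[
P(T)=\sum_{i,j} c_{ij}\Bigl(\sum_{|\mu|=i}T_1^{\mu_1}T_2^{\mu_2}\Bigr)T_3^j
\]
evaluated at $(\zeta^{m_1},\zeta^{m_2},\zeta^{m_1+m_2})$. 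Here the polynomial dependence on $i$ and $j$ arises from expanding the shifted-power kernels $h^{k+j}$ and $\Sym^i H$ near the diagonal. Because we need $C^p$ control, $p$ extra derivatives fall on these expressions. The total-order hypothesis makes all of these derivatives vanish, so each $g\ne 1$ contributes $O(k^{n-N})$ in $C^p$. I expect this step to be the main obstacle: I must bound the degree in $(i,j)$ of each coefficient and show it needs at most $5$ derivatives per order. The reason is that each order in $k^{-1}$ produces terms polynomial of bounded degree in $i$ and $j$, together with the Lie-algebra derivative.

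Third, the identity term gives $\sum c_{ij}\tau_{i,k+j}(\tilde B_{i,k+j}(x,x))$. By the known expansion for vector-bundle-valued Bergman kernels, $\tilde B_{i,k+j}=(k+j)^n\bigl(\Id+\tfrac{1}{k}(\tfrac12\Scal_\omega\Id+\sqrt{-1}\Lambda R^{\Sym^i}+\dots)\bigr)$. Applying $\tau_{i,k}$ to $\Id_{\Sym^i}$ gives $(i+1)\cdot$ a multiple of $\Id_E$, matching the normalization in $b_0$ and in the factor $k^{n+1}$ once the dependence of $\tau$ on $k$ is accounted for. Expanding $(k+j)^n$ yields the $nj$ term, and the curvature terms yield the stated $\frac{(i+1)(2i+1)}{2}\Scal_\omega$ after tracing. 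The coefficients are universal by the manifold theory, which gives the claimed formulas.
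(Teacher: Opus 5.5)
Your outline has the same architecture as the paper's proof: localize on a linear chart, write the kernel as a sum over $G$, let the identity terms give the smooth expansion, and kill the $g\neq 1$ terms with the vanishing hypothesis. However, the step you flag as ``the main obstacle'' is the actual content of the proof, and the heuristic you give for it is neither proved nor correctly formulated.

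The cancellation happens \emph{across different $i$}. You need the $g$-term for $\Sym^i\mathcal{E}\otimes\mathcal{L}^{k+j}$, after applying $\tau_{i,k+j}$, to equal a \emph{fixed} ($i$- and $j$-independent) differential operator applied to $\sum_{|\mu|=i}T_1^{\mu_1}T_2^{\mu_2}T_3^j$. An expansion for each bundle separately does not tell you how its coefficients depend on $i$. Also, ``polynomial in $(i,j)$'' is not the right notion, since these coefficients are $(i+1)\times(i+1)$ matrices. The paper supplies two ingredients your plan lacks.
\begin{enumerate}
\item From the recursion in \Cref{lem: recursive formula}, where $\Delta_i$ involves $\overline{H^i}(x,z)^{-1}\overline{H^i}(y,z)$ (that is, $\Sym^i$ of an $i$-independent $2\times 2$ matrix), every $\tilde b_{i,q}$ is a linear combination of products of at most $q$ matrices $\mathfrak{s}^i(M_l)$ with $M_l$ independent of $i$ (\Cref{prop: shape of coefficients}).
\item For such products, the diagonal entries, and the off-by-one entries weighted by the factors $\sqrt{\mu_1(\mu_2+1)}$ coming from $\tau$, are obtained by applying $i$-independent differential operators in $T_1,T_2$ to $T_1^{\mu_1}T_2^{\mu_2}$ (\Cref{prop: diff op}). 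This is not automatic: individual entries of $\mathfrak{s}^i(M)$ carry square roots of the weights, which combine into polynomials only along closed paths.
\end{enumerate}
Your pairing idea $\tr(\tau(A)B)=\tr(A\,\mathfrak{a}(B))$ could replace that path counting, since each entry then becomes a derivative at $\rho(g)$ of the character $A\mapsto\tr\Sym^i(A)$. But this only works once the first ingredient is in place.

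You also need to make ``expanding near the diagonal'' quantitative. The factors $\overline{H^i}(h\cdot\tilde x,\overline{g\cdot\tilde x})^{-1}\overline{H^i}(g\cdot\tilde x)=e^{\mathfrak{s}^i(M(\tilde x))}$ and $\eta(\tilde x)^{j}$ are not polynomial of bounded degree in $\mathfrak{s}^i(\cdot)$, respectively in $j$. The paper handles this in three moves:
\begin{enumerate}
\item it truncates the exponential at order $2(N+p+1)$, using $M(\tilde x)=O(\|h\cdot\tilde x-g\cdot\tilde x\|)$;
\item it evaluates the generating polynomial at $T_3=\sigma_1\sigma_2\eta(\tilde x)$, so the hypothesis produces a factor $(\eta-1)^{2N+2p+4}$;
\item it trades each power of $\|h\cdot\tilde x-g\cdot\tilde x\|^2$ for a power of $k^{-1}$ via \Cref{lem: claim}.
\end{enumerate}
The threshold $5N+5p+5$ comes out of exactly this bookkeeping, which you assert rather than carry out.

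Your last paragraph matches the stated $b_1$ rather than computing it, and the computation does not support it. First, $\tr(\sqrt{-1}\Lambda F_{\Sym^i H})=\frac{i(i+1)}{2}\tr(\sqrt{-1}\Lambda F_H)$ is a constant, because $\tr F_H=F_{\det H}=-2\pi\sqrt{-1}\omega$, so it cannot produce a multiple of $\Scal_\omega$. Second, since $\tau_{i,k+j}(A)=(k+j)\tr(A)\Id+\tau_{i,0}(A)$, the leading term $(i+1)(k+j)^{n+1}$ contributes $(n+1)(i+1)j$, not $nj$, at order $k^n$. The paper's proof states this line without computation too, so it should be recomputed, not matched.

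Finally, passing to $\mathbb{P}(\mathcal{E}^*)$ is not an equivalent reformulation. It is not a cover, and it still has nontrivial stabilizers whenever some element of $G_x$ acts on $\mathcal{E}_x$ by a scalar. Moreover, $\mathcal{O}(i)\otimes\pi^*\mathcal{L}^{k+j}$ with $i$ bounded is not a power of a fixed positive bundle.
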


There are many possible choices of $c_{ij}$ that satisfy the hypothesis of the theorem.  One of these is made by setting
$$c_{ij} =a_{i+j} a_j - a_{i+j+1}a_{j-1}$$
where the $a_i$ are defined by
$$\sum_i a_i t^i = (1+ t + \cdots t^d)^p.$$
These $c_{ij}$ are non-negative as the $a_i$ are log concave.

To prove Theorem \ref{thm: main}, we will adapt the arguments in \cite{RT}.  The key difference is that in this non-abelian case we need control over the form of the coefficients that appear in the expansion of the Bergman kernels $B_{i,k+j}$ of $\Sym^i \mathcal E\otimes \mathcal L^{j+k}$ for $k\gg 0$.  In \Cref{orbifolds}, we recall some basic facts about orbifolds. In \Cref{local reproducing kernels}, we will define local reproducing kernels as in \cite{BBS} and \cite{RT}, and show that the local reproducing kernels approximate the global Bergman kernels. In \Cref{coefficients}, we analyze the coefficients $b_{i,q}$ of the Bergman functions $B_{i,k+j}$ and show that they must take on a particular form. We will prove \Cref{thm: main} in \Cref{expansion}. Finally, we will show in \Cref{Schur} that constants $\{c_{ij}\}$ that satisfy the hypotheses of \Cref{thm: main} always exist and how they relate to Schur positivity of $(1+z+ \cdots+ z^d)^p (1+ w + \cdots + w^d)^p$.

\begin{ack*}
SK would like to thank Sayok Chakravarty and Henry Fontana for pointing out \Cref{lem: Log-concave and Schur}. We also thank Ruadha\'i Dervan, Stephen Mackes, Jaegeon Shin, Jennifer Vaccaro, and Edward Varvak for helpful conversations.  JR is partially supported by a Simons Travel Award.
\end{ack*}

\section{Orbifolds} \label{orbifolds}

In this section, we recall necessary definitions and facts about orbifolds and orbi-bundles. We refer the readers to \cite{B}, \cite{BG}, \cite{MM}, and \cite{S} for more details.

\begin{defn}
    Let $X$ be a topological space. An \emph{orbifold chart} of dimension $n \geq 0$ on $X$ is a triple $(U, G, \varphi)$ where $U$ is a connected open subset of $\mathbb{R}^n$, $G$ is a finite subgroup of diffeomorphisms of $U$, and $\varphi : U \to X$ is an open map that induces a homeomorphism $U/G \to \varphi(U)$.

    Let $(U,G,\varphi)$ and $(U',G',\varphi')$ be orbifold charts. An \emph{embedding} $\lambda : (U',G',\varphi') \to (U,G,\varphi)$ between orbifold charts is a smooth embedding $\lambda : U' \to U$ such that $\varphi \circ \lambda = \varphi'$.

    We say that an orbifold chart $(U,G,\varphi)$ is a \emph{holomorphic chart} of complex dimension $n$ if $U$ is a subset of $\mathbb{C}^n$ and $G$ is contained in the group of biholomorphisms of $U$. Similarly, we say that an embedding $\lambda : (U',G',\varphi') \to (U,G,\varphi)$ between orbifold charts is a \emph{holomorphic embedding} between orbifold charts if $\lambda: U' \to U$ is holomorphic.
\end{defn}

We recall some notation regarding group actions on a manifold. Let $M$ be a manifold and denote the group of diffeomorphisms of $M$ by $\Diff(M)$. Let $G$ be a finite subgroup of $\Diff(M)$. The \emph{isotropy group} of a subset $S \subseteq M$ is
\begin{equation*}
    G_S = \{g \in G : g \cdot S = S\}.
\end{equation*}
If $S = \{x\}$ only has one element, then we will write $G_x$ for $G_S$. A subset $S \subseteq M$ is called \emph{$G$-stable} if it is connected and, for any $g \in G$, either $g \cdot S = S$ or $g \cdot S \cap S = \emptyset$. The following proposition lists basic properties of embeddings between orbifold charts. The proof of the proposition can be found in \cite{MM}.

\begin{prop} \label{prop: orbifold}
\begin{enumerate}
    \item For any embedding $\lambda : (U',G',\varphi') \to (U, G, \varphi)$ between orbifold charts on $X$, the image $\lambda (U')$ is a $G$-stable open subset of $U$, and there is a unique isomorphism $\overline{\lambda} : G' \to G_{\lambda(U')}$ for which $\lambda(g' \cdot \tilde{x}) = \overline{\lambda}(g') \cdot \lambda(\tilde{x})$.
    
    \item The composition of embeddings between orbifold charts is an embedding between orbifold charts.
    
    \item For any orbifold chart $(U,G,\varphi)$, any diffeomorphism $g \in G$ is an embedding of $(U,G,\varphi)$ into itself and $\overline{g}(h) = ghg^{-1}$.

    \item If $\lambda, \mu : (U',G',\varphi') \to (U,G,\varphi)$ are two embeddings between the same orbifold charts, there exists a unique $g \in G$ with $\lambda = g \circ \mu$.
\end{enumerate} 
\end{prop}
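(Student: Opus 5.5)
The plan is to reduce everything to one rigidity statement about finite groups acting by diffeomorphisms. Part (4) is the core, and parts (1)--(3) follow from it. Throughout I use that $G$ is a finite subgroup of $\Diff(U)$, so it acts effectively on the connected manifold $U$.

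\emph{Step 1 (rigidity).} Choose a $G$-invariant Riemannian metric on $U$ by averaging, so that $G$ acts by isometries. An isometry of a connected manifold which fixes a point $y$ and has $d g_y=\Id$ is the identity near $y$ (via $\exp_y$). Its fixed set is then open; it is also closed, so it is all of $U$. Two consequences follow.
\begin{itemize}
\item A nontrivial $g\in G$ fixes no open set, so the locus $U^{\mathrm{free}}$ where $G$ acts freely is open and dense.
\item An element $g\in G$ is determined by its $1$-jet at any single point.
\end{itemize}

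\emph{Step 2 (proof of (4)).} Let $\lambda,\mu:U'\to U$ be embeddings with $\varphi\circ\lambda=\varphi\circ\mu=\varphi'$.
\begin{itemize}
\item Take $x'$ with $\mu(x')\in U^{\mathrm{free}}$. Since $\varphi(\lambda(x'))=\varphi(\mu(x'))$, there is a unique $g$ with $\lambda(x')=g\mu(x')$.
\item $G$ is finite and $\lambda,\mu$ are continuous, so this $g$ is locally constant on the dense open set $\mu^{-1}(U^{\mathrm{free}})$.
\item Define $S_g$ to be the set of points near which $\lambda=g\circ\mu$. Each $S_g$ is open, and their union is dense.
\item Each $S_g$ is also closed. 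Suppose $x'$ lies in $\overline{S_{g_1}}\cap\overline{S_{g_2}}$. By continuity of first derivatives, $g_1\mu(x')=g_2\mu(x')$ and $d(g_1)\circ d\mu=d(g_2)\circ d\mu$ at $x'$. Since $d\mu_{x'}$ is an isomorphism (the dimensions agree), $g_1^{-1}g_2$ fixes $\mu(x')$ with identity derivative. Step 1 then gives $g_1=g_2$.
\end{itemize}
Since $U'$ is connected, a single $S_g$ is all of $U'$, so $\lambda=g\circ\mu$. Uniqueness of $g$ follows from the density of free points in the open set $\mu(U')$.

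\emph{Step 3 (the remaining parts).} Part (2) is immediate: a composite of smooth embeddings is a smooth embedding, and it still commutes with the maps $\varphi$. Part (3) is immediate: $g$ is a diffeomorphism of $U$ with $\varphi\circ g=\varphi$, and $\bar g(h)=ghg^{-1}$ is forced by the identity $g(hx)=(ghg^{-1})(gx)$.

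For (1), argue as follows.
\begin{itemize}
\item $\lambda(U')$ is open by invariance of domain.
\item \emph{Definition of $\overline\lambda$.} For $g'\in G'$, apply (4) to the pair $\lambda\circ g'$ and $\lambda$. This gives a unique $\overline\lambda(g')\in G$ with $\lambda\circ g'=\overline\lambda(g')\circ\lambda$. Uniqueness makes $\overline\lambda$ a homomorphism. It is injective because $G'$ acts effectively. Its image lies in $G_{\lambda(U')}$.
\item \emph{Surjectivity onto $G_{\lambda(U')}$.} If $g\lambda(U')=\lambda(U')$, then $\lambda^{-1}g\lambda$ is a diffeomorphism of $U'$ commuting with $\varphi'$. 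Applying (4) to the chart $(U',G',\varphi')$, with $\lambda^{-1}g\lambda$ and $\Id_{U'}$ as the two embeddings, shows $\lambda^{-1}g\lambda$ equals some $g'\in G'$.
\item \emph{$G$-stability.} Suppose $g\lambda(U')\cap\lambda(U')\neq\emptyset$. Pick a free point $y=\lambda(x')$ of $U$ with $g^{-1}y\in\lambda(U')$ (possible because the free locus is dense). Then $\varphi'(x')=\varphi(y)$ lies in $\varphi'(U')=\varphi(U)\cap\varphi(\lambda(U'))$, so some $x''\in U'$ has $\lambda(x'')=g^{-1}y$. Hence $\varphi'(x'')=\varphi'(x')$, giving $x''=h'x'$ for some $h'\in G'$, so $g^{-1}y=\overline\lambda(h')y$. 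Freeness at $y$ forces $g^{-1}=\overline\lambda(h')\in G_{\lambda(U')}$.
\end{itemize}

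I expect the main obstacle to be the closedness argument in Step 2. The free locus can be disconnected by codimension-one fixed sets (reflections), so density alone does not glue the local group elements together. The $1$-jet rigidity of Step 1 is exactly what handles this.
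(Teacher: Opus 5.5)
Your proof is correct. The paper gives no argument of its own here; it simply refers to \cite{MM}. Your write-up is a self-contained version of the standard argument behind that reference: rigidity of a finite group acting by isometries of an averaged metric, density of the free locus, the gluing step for (4), and then (1)--(3) deduced from (4).

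Two points of wording should be tightened.
\begin{enumerate}
\item In Step 1, the exponential-map argument does not show that the fixed set of $g$ is open. What it shows to be open is the set of points where $g$ agrees with the identity to first order, which is the same as the interior of the fixed set. That set is also closed by continuity of $dg$, and this is what gives $g=\Id$.
\item In Step 2, you have really shown that the closures $\overline{S_{g}}$ are pairwise disjoint. The quickest way to finish is to note that these finitely many closed sets cover the connected space $U'$, because $\bigcup_g S_g$ is dense. Hence exactly one of them is all of $U'$, so $\lambda=g\circ\mu$ on a dense set and therefore everywhere.
\end{enumerate}
The detour through $\varphi'(U')$ in your $G$-stability bullet is unnecessary, since you already chose $y$ with $g^{-1}y\in\lambda(U')$.

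In the paper's holomorphic setting you could bypass the $1$-jet argument. There, fixed loci of nontrivial finite-order biholomorphisms are complex submanifolds of real codimension at least two, so the free part of $\mu(U')$ is connected and local constancy of $g$ already suffices. However, the proposition is stated for real orbifold charts, where reflections can disconnect the free locus. Your rigidity argument is the one that covers that generality.
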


\begin{defn}
    We say that two (holomorphic) orbifold charts $(U,G,\varphi)$ and $(U',G',\varphi')$ of (complex) dimension $n$ on $X$ are \emph{compatible} if, for any $x \in \varphi(U) \cap \psi(U')$, there exist an (holomorphic) orbifold chart $(W,K,\theta)$ on $X$ with $x \in \theta(W)$ and (holomorphic) embeddings between orbifold charts $\lambda : (W,K,\theta) \to (U,G,\varphi)$ and $\mu : (W,K,\theta) \to (U',G',\varphi')$.
    
An \emph{(complex) orbifold atlas} of dimension $n$ on a topological space $X$ is a collection $\mathcal{U} = \{(U_i,G_i,\varphi_i)\}_{i \in I}$ of pairwise compatible (holomorphic) orbifold charts of (complex) dimension $n$ on $X$ such that $\bigcup_{i \in I} \varphi_i(U_i) = X$. Two (complex) orbifold atlases of $X$ are equivalent if their union is an (complex) orbifold atlas.

    An \emph{(complex) orbifold} of dimension $n$ is a pair $\mathcal{X} = (X,\mathcal{U})$, where $X$ is a second-countable Hausdorff topological space and $\mathcal{U}$ is a maximal (complex) orbifold atlas of (complex) dimension $n$ on $X$.

\end{defn}

\begin{defn}
    Let $\mathcal{X} =(X,\mathcal{U})$ be an orbifold and let $x \in X$. Choose an orbifold chart $(U,G,\varphi) \in \mathcal{U}$ and $\tilde{x} \in U$ such that $\varphi(\tilde{x}) = x$. Then, the \emph{isotropy group of $x$} is $G_x := G_{\tilde{x}}$. By \Cref{prop: orbifold}, the isotropy group of $x$ does not depend on our choice of $(U,G,\varphi)$ and $\tilde{x} \in U$. 
\end{defn}

\begin{defn}
    A smooth real \emph{orbifold vector bundle $\mathcal{E}$} of rank $r$ over an orbifold $\mathcal{X} = (X,\mathcal{U})$ consists of a smooth real vector bundle $E_{U}$ of rank $r$ over $U$ for each orbifold chart $(U,G,\varphi) \in \mathcal{U}$ together with an anti-homomorphism $\rho_U$ from $G$ to the group of equivalences of $E_U$ onto itself satisfying the following conditions.
    \begin{enumerate}
        \item If $\xi$ lies in the fiber ${E_U}_{\mid x}$ over $x \in U$, then, for each $g \in G$, $\rho_U(g) (\xi)$ lies in the fiber over $g^{-1} \cdot x$. 

        \item If $\lambda : (U',G',\varphi') \to (U,G,\varphi)$ is an embedding between orbifold charts on $X$, then there is a bundle map $\lambda^* : {E_{U}}_{\mid \lambda(U')} \to E_{U'}$ such that, for any $g' \in G'$, 
        \begin{equation*}
            \rho_{U'} (g')  \circ \lambda^* = \lambda^* \circ \rho_{U}\left (\overline{\lambda}(g') \right ).
        \end{equation*}

        \item If $\lambda_{ji} : (U_i,G_i,\varphi_i) \to (U_j,G_j,\varphi_j)$ and $\lambda_{kj} : (U_j,G_j,\varphi_j) \to (U_k,G_k,\varphi_k)$ are embeddings between orbifold charts on $X$, then $(\lambda_{kj} \circ {\lambda_{ji}})^* = \lambda_{ji}^* \circ \lambda_{kj}^*$.
    \end{enumerate}

   Smooth complex orbifold vector bundles are defined in an analogous way. We say that a smooth complex orbifold vector bundle $\mathcal{E}$ over a complex orbifold $\mathcal{X} = (X,\mathcal{U})$ is \emph{holomorphic} if, for each $(U,G,\varphi) \in \mathcal{U}$, $E_U$ is a holomorphic vector bundle and, for any embedding $\lambda : (U',G',\varphi') \to (U,G,\varphi)$, $\lambda^*: {E_U}_{\mid \lambda(U')} \to E_{U'}$ is a holomorphic bundle map.
\end{defn}

\begin{defn}
Let $\mathcal{E}$ be an orbifold vector bundle over an orbifold $\mathcal{X} = (X,\mathcal{U})$. Then, a \emph{section} $\sigma$ of $\mathcal{E}$ over an open set $W \subseteq X$ consists of a section $\sigma_U$ of the bundle $E_U$ for each orbifold chart $(U,G,\varphi) \in \mathcal{U}$ with $\varphi(U) \subseteq W$ such that,
for any embedding $\lambda : (U',G',\varphi') \to (U,G,\varphi)$  between orbifold charts and any $\tilde{x} \in U'$,
\begin{equation*}
\lambda^* ( \sigma_U(\lambda(\tilde{x}))) = \sigma_{U'}(\tilde{x}).
\end{equation*}

If $\mathcal{E}$ is holomorphic and each of the local sections $\sigma_U$ is holomorphic, we say that $\sigma$ is a \emph{holomorphic section}.
\end{defn}

\begin{exmps}

Let $\mathcal{X} = (X,\mathcal{U})$ be a complex orbifold. 

\begin{itemize}
\item The \emph{tangent bundle} $\mathcal{TX}$ over an orbifold $\mathcal{X}$ is a real orbifold vector bundle comprised of the local tangent bundles $TU$ for each $(U,G,\varphi) \in \mathcal{U}$. The complexified tangent bundle $\mathcal{T}_\mathbb{C} \mathcal{X}$ splits into a direct sum of the holomorphic tangent bundle $\mathcal{T}^{(1,0)}\mathcal{X}$ and its conjugate $\mathcal{T}^{(0,1)}\mathcal{X}$.

\item $\bigwedge^p \mathcal{T^*X}$ are real orbifold vector bundles over $\mathcal{X}$ and $\bigwedge^{(p,q)} \mathcal{T}^*\mathcal{X} := \left ( \bigwedge^p \mathcal{T}^{(1,0)}\mathcal{X} \right ) \otimes \left ( \bigwedge^q \mathcal{T}^{(0,1)}\mathcal{X} \right )$ are complex orbifold vector bundles over $\mathcal{X}$

\item A \emph{smooth $p$-form} over $\mathcal{X}$ is a section of $\bigwedge^p \mathcal{T^*X}$. In other words, it consists of $p$-forms $\omega_{U}$ for each orbifold chart $(U,G,\varphi) \in \mathcal{U}$ such that if $\lambda : (U',G',\varphi') \to (U,G,\varphi)$ is an embedding between orbifold charts, then $\lambda^* \omega_U = \omega_{U'}$.

\item A \emph{smooth $(p,q)$-form} on $\mathcal{X}$ is a section of $\bigwedge^{(p,q)} \mathcal{T}^*\mathcal{X}$.

\item A \emph{K\"ahler form} $\omega$ on $\mathcal{X}$ consists of a $(1,1)$-form on $\mathcal{X}$ such that, for each orbifold chart $(U,G,\varphi) \in \mathcal{U}$, the corresponding local form $\omega_U$ is K\"ahler.
\end{itemize}
\end{exmps}
 
\begin{rem}

Let $\mathcal{X} = (X,\mathcal{U})$ be a complex orbifold. Let $(U,G,\varphi) \in \mathcal{U}$ be an orbifold chart and let $\alpha$ be a $2n$-form defined on $\varphi(U)$. If $\mathcal{O} \subseteq \varphi(U)$ is an open subset, then the integral of $\alpha$ over $\mathcal{O}$ is defined by
\begin{equation*}
\int_\mathcal{O} \alpha = \frac{1}{|G|} \int_{\varphi^{-1}(\mathcal{O})} \alpha_U.
\end{equation*}
Using this definition, we can define the integral of a $2n$-form over $\mathcal{X}$. If $\alpha$ is a smooth $(2n-1)$-form with compact support on $\mathcal{X}$, then $\int_{\mathcal{X}} d\alpha = 0$ (\cite{B}).
\end{rem}

\begin{defn}
Let $\mathcal{X} = (X,\mathcal{U})$ be a complex orbifold. A \emph{Riemannian metric} $g$ on $\mathcal{X}$ is a collection of Riemannian metrics $g_U$ for each orbifold chart $(U,G,\varphi) \in \mathcal{U}$ such that, for any embedding $\lambda : (U',G',\varphi') \to (U,G,\varphi)$, $\tilde{x} \in U'$, and $v,w \in {T_U}_{\mid \lambda(\tilde{x})}$,
\begin{equation*}
g_{U'}(\tilde{x}) (\lambda^*(v), \lambda^*(w)) = g_U(\lambda(\tilde{x})) (v,w).
\end{equation*}

Similarly, a \emph{Hermitian metric} $h$ on a complex orbifold vector bundle $\mathcal{E}$ over $\mathcal{X}$ is a collection of Hermitian metrics $h_U$ on $E_U$ for each orbifold chart $(U,G,\varphi) \in \mathcal{U}$ such that
\begin{equation*}
h_{U'}(\tilde{x}) (\lambda^*(\xi), \lambda^*(\eta)) = h_U(\lambda(\tilde{x})) (\xi,\eta).
\end{equation*}
for any embedding $\lambda : (U',G',\varphi') \to (U,G,\varphi)$, $\tilde{x} \in U'$, and $\xi,\eta \in {E_U}_{\mid \lambda(\tilde{x})}$.
\end{defn}

\begin{defn}
Let $\mathcal{X} = (X,\mathcal{U})$ be a complex orbifold and let $\mathcal{E}_1$ and $\mathcal{E}_2$ be orbifold vector bundles over $\mathcal{X}$. An \emph{orbifold vector bundle map} $\theta : \mathcal{E}_1 \to \mathcal{E}_2$ is a collection of vector bundle maps $\theta_U : (E_1)_U \to (E_2)_U$ for each orbifold chart $(U,G,\varphi) \in \mathcal{U}$ such that, for any injection $\lambda: (U',G',\varphi') \to (U,G,\varphi)$, the following diagram commutes.


\[
\begin{CD}
{(E_1)_U}_{\mid \lambda(U')} @>{\theta_U}>> {(E_2)_U}_{\mid \lambda(U')} \\
@V{\lambda^*}VV @VV{\lambda^*}V \\
(E_1)_{U'} @>{\theta_{U'}}>> (E_2)_{U'}
\end{CD}
\]

\end{defn}

\begin{exmps}

Let $\mathcal{X} = (X,\mathcal{U})$ be a complex orbifold and let $\mathcal{E}$ be a smooth complex orbifold vector bundle over $\mathcal{X}$. Let $\omega$ be a K\"ahler form on $\mathcal{X}$. Then, $\omega$ induces a Riemannian metric $g$ on $\mathcal{X}$.

\begin{itemize}
\item The \emph{Hodge $*$ operator} $*$ with respect to the Riemannian metric $g$ on $\left ( \bigwedge^{(p,q)} \mathcal{T}^*\mathcal{X} \right ) \otimes \mathcal{E}$ is the orbifold vector bundle map $* : \left ( \bigwedge^{(p,q)} \mathcal{T}^*\mathcal{X} \right ) \otimes \mathcal{E} \to \left ( \bigwedge^{(n-q,n-p)} \mathcal{T}^*\mathcal{X} \right ) \otimes \mathcal{E}$ that is the collection of the usual Hodge $*$ operators $*_U : \left ( \bigwedge^{(p,q)} T^*U \right ) \otimes E_U \to \left ( \bigwedge^{(n-q,n-p)} T^*U \right ) \otimes E_U$ for each orbifold chart $(U,G,\varphi) \in \mathcal{U}$.  

\item The \emph{Lefschetz operator} $L: \left ( \bigwedge^{(p,q)} \mathcal{T}^*\mathcal{X} \right ) \otimes \mathcal{E} \to \left ( \bigwedge^{(p+1,q+1)} \mathcal{T}^*\mathcal{X} \right ) \otimes \mathcal{E}$ and the \emph{dual Lefschetz operator} $\Lambda : \left ( \bigwedge^{(p,q)} \mathcal{T}^*\mathcal{X} \right ) \otimes \mathcal{E} \to \left ( \bigwedge^{(p-1,q-1)} \mathcal{T}^*\mathcal{X} \right ) \otimes \mathcal{E}$  are defined analogously.

\end{itemize}
\end{exmps}

\begin{defn}
Let $\mathcal{X} = (X,\mathcal{U})$ be an orbifold. An \emph{orbisheaf} on $\mathcal{X}$ consists of a sheaf $\mathcal{F}_U$ for each orbifold chart $(U,G,\varphi) \in \mathcal{U}$ such that the following conditions hold.

\begin{enumerate}
\item If $\lambda: (U',G',\varphi') \to (U,G,\varphi)$ is an embedding, then there is an isomorphism of sheaves $\mathcal{F}(\lambda) : \mathcal{F}_{U'} \to \lambda^* \mathcal{F}_{U}$. 
\item If $\lambda: (U',G',\varphi') \to (U,G,\varphi)$ and $\lambda' :  (U'',G'',\varphi'') \to (U',G',\varphi')$ are embeddings, then
\begin{equation*}
\mathcal{F}(\lambda \circ \lambda') = \mathcal{F}(\lambda') \circ \lambda'^* \mathcal{F}(\lambda).
\end{equation*}
\end{enumerate}

\end{defn}

\begin{exmps}

Let $\mathcal{X} = (X,\mathcal{U})$ be a complex orbifold and let $\mathcal{E}$ be a holomorphic orbifold vector bundle over $\mathcal{X}$.

\begin{itemize}
\item The orbisheaf $\mathcal{A}^k(\mathcal{E})$ of smooth orbifold sections of $\left (\bigwedge^k \mathcal{T}^* \mathcal{X} \right ) \otimes \mathcal{E}$ is the collection of sheaves $\mathcal{A}^k(E_U)$ of smooth differential $k$-forms with values in $E_U$ for each orbifold chart $(U,G,\varphi) \in \mathcal{U}$. 

\item Similarly, the orbisheaf $\mathcal{A}^{p,q}(\mathcal{E})$ of smooth orbifold sections of $\left (\bigwedge^{(p,q)} \mathcal{T}^* \mathcal{X} \right ) \otimes \mathcal{E}$ is the collections of sheaves $\mathcal{A}^{p,q}(E_U)$ for each orbifold chart $(U,G,\varphi) \in \mathcal{U}$.

\end{itemize}
\end{exmps}

\begin{defn}
A \emph{morphism of orbisheaves} $\theta : \mathcal{F}_1 \to \mathcal{F}_2$ is a family of sheaf maps $\theta_U : (\mathcal{F}_1)_U \to (\mathcal{F}_2)_U$ for each orbifold chart $(U,G,\varphi) \in \mathcal{U}$ such that, for any embedding $\lambda : (U',G',\varphi') \to (U,G,\varphi)$, the following diagram commutes.


\[
\begin{CD}
(\mathcal{F}_1)_{U'} @>{\theta_{U'}}>> (\mathcal{F}_2)_{U'} \\
@V{\mathcal{F}_1(\lambda)}VV @VV{\mathcal{F}_2(\lambda)}V \\
\lambda^*(\mathcal{F}_1)_U @>{\lambda^*\theta_U}>> \lambda^*(\mathcal{F}_2)_U
\end{CD}
\]

\end{defn}

Now, let $\mathcal{X} = (X,\mathcal{U})$ be a compact complex orbifold equipped with a K\"ahler metric $\omega$. Let $\mathcal{E}$ be a holomorphic orbifold vector bundle over $\mathcal{X}$ equipped with an orbifold Hermitian metric $H$. The \emph{Chern connection} $\nabla : \mathcal{A}^k(\mathcal{E}) \to \mathcal{A}^{k+1}(\mathcal{E})$ is the collection of Chern connections $\nabla_U : \mathcal{A}^k(E_U) \to \mathcal{A}^{k+1}(E_U)$ with respect to $H_U$ for each orbifold chart $(U,G,\varphi) \in \mathcal{U}$. 

\begin{lem} \label{lem: Chern connection}
The Chern connection $\nabla : \mathcal{A}^k(\mathcal{E}) \to \mathcal{A}^{k+1}(\mathcal{E})$ is a morphism of orbisheaves. 
\end{lem}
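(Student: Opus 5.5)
To show that $\nabla$ is a morphism of orbisheaves, we must check compatibility with embeddings. Fix an embedding $\lambda : (U',G',\varphi') \to (U,G,\varphi)$. The structure map $\mathcal{A}^k(\mathcal{E})(\lambda)$ identifies a form $\sigma_U \in \mathcal{A}^k(E_U)$ over $\lambda(U')$ with the form $\lambda^*\sigma_U \in \mathcal{A}^k(E_{U'})$. This pullback acts on the form part by ordinary pullback along the smooth embedding $\lambda$, and on the bundle part by the bundle map $\lambda^* : {E_U}_{\mid \lambda(U')} \to E_{U'}$. So the claim is that
\begin{equation*}
\nabla_{U'}(\lambda^* \sigma) = \lambda^*(\nabla_U \sigma)
\end{equation*}
for every local $E_U$-valued $k$-form $\sigma$ on $\lambda(U')$.

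\textbf{Step 1: pull back the connection.} Since $\lambda : U' \to \lambda(U')$ is a biholomorphism onto an open subset and $\lambda^*$ is a holomorphic bundle isomorphism covering it, we can transport $\nabla_U$ to a connection $\tilde\nabla := \lambda^* \circ \nabla_U \circ (\lambda^*)^{-1}$ on $E_{U'}$. The task then becomes showing $\tilde\nabla = \nabla_{U'}$.

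\textbf{Step 2: verify the Chern conditions.} This rests on uniqueness of the Chern connection, i.e.\ the unique connection compatible with the Hermitian metric whose $(0,1)$-part is $\bar\partial$.
\begin{itemize}
\item \emph{Metric compatibility.} By the definition of an orbifold Hermitian metric, $\lambda^*$ is an isometry from $H_U$ to $H_{U'}$. Since $d$ commutes with pullback, $\tilde\nabla$ is compatible with $H_{U'}$.
\item \emph{Holomorphic structure.} Since $\lambda$ is holomorphic, pullback preserves the type decomposition. Since $\lambda^*$ is a holomorphic bundle map, it intertwines the $\bar\partial$-operators of $E_U$ and $E_{U'}$. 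Hence $\tilde\nabla^{0,1} = \bar\partial_{E_{U'}}$.
\end{itemize}
Uniqueness then gives $\tilde\nabla = \nabla_{U'}$.

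A direct alternative is to work in local holomorphic frames. If $e$ is a holomorphic frame of $E_U$, then $\lambda^* e$ is a holomorphic frame of $E_{U'}$, and the Gram matrix satisfies $H_{U'}(\lambda^* e) = \lambda^* H_U(e)$. The connection matrices $\partial H \cdot H^{-1}$ (in the appropriate convention) therefore correspond under pullback.

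\textbf{Step 3: extend to forms of all degrees.} The identity holds on sections ($k = 0$). It extends to $k$-forms because both sides satisfy the Leibniz rule
\begin{equation*}
\nabla(\alpha \wedge s) = d\alpha \wedge s + (-1)^{\deg\alpha}\alpha \wedge \nabla s,
\end{equation*}
and $d$ commutes with pullback.

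The only mildly delicate point is bookkeeping. One must make precise that the orbisheaf structure maps of $\mathcal{A}^k(\mathcal{E})$ are exactly this combined pullback, and then the commutative square follows formally. Each step is routine, so no genuine obstacle is expected.
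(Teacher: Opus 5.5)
Your proposal is correct and follows essentially the same route as the paper: transport $\nabla_U$ to $\lambda^*\circ\nabla_U\circ(\lambda^*)^{-1}$, check metric compatibility (using that $\lambda^*$ is an isometry and that $d$ commutes with pullback) and compatibility with the holomorphic structure, invoke uniqueness of the Chern connection, and extend from sections to $k$-forms by the Leibniz rule. The only difference is that the paper checks holomorphic compatibility via an explicit local formula involving the holomorphic transition matrix $\rho\circ\lambda$, where you argue abstractly that $\lambda^*$ intertwines the $\bar\partial$-operators.
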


\begin{proof}
Let $\lambda : (U',G',\varphi') \to (U,G,\varphi)$ be an embedding between orbifold charts and set 
\begin{equation*}
\tilde{\nabla}_{U'} = \lambda^* \circ \nabla_U \circ (\lambda^*)^{-1}.
\end{equation*}  
We will show that $\tilde{\nabla}_{U'} = \nabla_{U'}$ on local sections of $E_{U'}$. This is sufficient to prove the lemma since the exterior derivative $d$ commutes with the pullback map $\lambda^*$ and, if $\alpha \otimes s$ is a local section of $\wedge^k T^*U' \otimes E_{U'}$ and $D$ is a connection, then
\begin{equation*}
D (\alpha \otimes s) = d \alpha \otimes s + (-1)^k \alpha \wedge D s.
\end{equation*} 

Let $W \subseteq U'$ be an open subset such that ${E_{U'}}_{\mid W}$ is a trivial vector bundle. Then, there exists a holomorphic matrix-valued function $\rho$ on $\lambda(W)$ such that, for any section $s$ of ${E_{U'}}_{\mid W}$ and for any $\tilde{x} \in W$,
\begin{equation*}
((\lambda^*)^{-1}  s) (\lambda(\tilde{x}) ) =  \rho(\lambda(\tilde{x}))^{-1}  s ( \tilde{x}).
\end{equation*}
Writing $\nabla_U : \mathcal{A}^0(E_U) \to \mathcal{A}^{1}(E_U)$ as $\nabla_U = d + A_U$, we observe that, for any section $s$ of $ {E_{U'}}_{\mid W}$,
\begin{equation} \tag{\textasteriskcentered} \label{eq: pullback connection}
\tilde{\nabla}_{U'} (s) = (\rho\circ \lambda) \left ( d ((\rho\circ \lambda)^{-1} s) \right ) + (\rho \circ \lambda) \lambda^*( A_U) (\rho \circ \lambda)^{-1} s.
\end{equation}
The equation \eqref{eq: pullback connection} and the fact that $\rho \circ \lambda$ is holomorphic imply that $\tilde{\nabla}_{U'}$ is compatible with the holomorphic structure. Moreover, for any sections $s_1$ and $s_2$ of ${E_{U'}}_{\mid W}$, we observe that
\begin{equation*}
\begin{split}
& H_{U'} (\tilde{\nabla}_{U'}  s_1,  s_2) + H_{U'} (s_1,\tilde{\nabla}_{U'} s_2) \\
& = \lambda^* H_{U} (\nabla_{U} (\rho^{-1} (s_1 \circ \lambda^{-1})), \rho^{-1} (s_2 \circ \lambda^{-1})) + \lambda^* H_{U} ( \rho^{-1} (s_1 \circ \lambda^{-1}), \nabla_U (\rho^{-1} (s_2 \circ \lambda^{-1}))) \\
& = \lambda^* d( H_U(\rho^{-1} (s_1 \circ \lambda^{-1}), \rho^{-1} (s_2 \circ \lambda^{-1}))) \\
& = d (H_{U'} (s_1, s_2)).
\end{split}
\end{equation*}
Therefore, $\tilde{\nabla}_{U'}$ must be the Chern connection.

\end{proof}

The \emph{curvature} with respect to the Chern connection $\nabla$ is defined by 
\begin{equation*}
F_H := \nabla \circ \nabla.
\end{equation*}
Then, $F_H: \mathcal{E} \to \left (\bigwedge^{(1,1)}  \mathcal{T}^*\mathcal{X}\right ) \otimes \mathcal{E}$ is an orbifold vector bundle map. The Chern connection has a decomposition 
\begin{equation*}
\nabla = \nabla^{(1,0)} + \overline{\partial}
\end{equation*}
where $\nabla^{(1,0)} : \mathcal{A}^{p,q}(\mathcal{E}) \to \mathcal{A}^{p+1,q}(\mathcal{E})$ is the $(1,0)$ part of $\nabla$. Denote by $\langle \cdot, \cdot \rangle$ the Hermitian metric on $\bigwedge^{p,q} \mathcal{T}^*X \otimes \mathcal{E}$ induced by $H$ and the K\"ahler metric. The induced $L^2$ inner product $( \cdot, \cdot) : \mathcal{A}^{p,q}(\mathcal{X}, \mathcal{E}) \times \mathcal{A}^{p,q}(\mathcal{X}, \mathcal{E}) \to \mathbb{C}$ is defined by 
\begin{equation*}
   (s,t) :=  \int_\mathcal{X} \langle s, t \rangle \frac{\omega^n}{n!}. 
\end{equation*}
The morphisms of orbisheaves ${\nabla^{(1,0)}}^*:\mathcal{A}^{p,q}(\mathcal{E}) \to \mathcal{A}^{p-1,q}(\mathcal{E})$ and $\overline{\partial}^*:\mathcal{A}^{p,q}(\mathcal{E}) \to \mathcal{A}^{p,q-1}(\mathcal{E})$ defined by 
\begin{equation*}
{\nabla^{(1,0)}}^* := - * \circ   \overline{\partial} \circ * \hspace{10pt} \text{ and } \hspace{10pt} \overline{\partial}^* := - * \circ   \nabla^{(1,0)} \circ *
\end{equation*}
are the adjoint operators, with respect to $(\cdot, \cdot)$, of $\overline{\partial}$ and $\nabla^{(1,0)}$, respectively.

\begin{defn} \textnormal{(\cite{B}, Section 6)} Let $(M,g)$ be an $n$-dimensional Riemannian manifold and let $E'$ be a complex vector bundle over $M$. We say that a differential operator $\theta$ of order $2$ on $E'$ is \emph{strongly elliptic} if, when expressed in terms of local trivializations of $E$ on coordinate neighborhoods, yields a differential operator on vector-valued functions of the form 
\begin{equation*}
\theta(s) = \sum_{\ell,m=1}^n g^{\ell m} \partial_\ell \partial_m s + \sum_{m=1}^n A^m \partial_m s + B s
\end{equation*}
where $(g^{ij})$ is the local representation of the induced Riemannian metric $g$ on $T^*X$ and $A^1$, $\dotsc$, $A^n$, $B$ are smooth matrix-valued functions.

Let $\mathcal{X} = (X,\mathcal{U})$ be a real orbifold and let $g$ be a Riemannian metric on $\mathcal{X}$. Let $\mathcal{E}'$ be a complex orbifold vector bundle over $\mathcal{X}$. A morphism of orbisheaves $\theta : \mathcal{A}^0(\mathcal{E}') \to \mathcal{A}^0(\mathcal{E}')$ is said to be \emph{strongly elliptic} if, for each orbifold chart $(U,G,\varphi) \in \mathcal{U}$, $\theta_U : \mathcal{A}^0(E'_U) \to \mathcal{A}^0(E'_U)$ is a strongly elliptic differential operator of order $2$.
\end{defn}

The \emph{Laplacians associated to $D'$ and $D''$} are defined by 
\begin{equation*}
\Delta' := {\nabla^{(1,0)}}^* \circ \nabla^{(1,0)} + \nabla^{(1,0)} \circ {\nabla^{(1,0)}}^* \hspace{10pt} \text{ and } \hspace{10pt} \Delta'' := \overline{\partial}^* \circ \overline{\partial} + \overline{\partial} \circ \overline{\partial}^*.
\end{equation*}
A global section $s \in \mathcal{A}^{p,q}(\mathcal{X},\mathcal{E})$ is called \emph{($\Delta''$-)harmonic} if $\Delta''(s) = 0$. We denote the space of all harmonic $(p,q)$-forms with values in $\mathcal{E}$ by $\mathcal{H}^{p,q}(\mathcal{X},\mathcal{E})$. The fact that $\Delta''$ is self-adjoint and that $2\Delta''$ is strongly elliptic imply the following result.

\begin{thm} \textnormal{(\cite{B}, Theorem D and Theorem F)} \label{thm: Hodge Decomposition}
There exists an orthogonal decomposition
\begin{equation*}
\mathcal{A}^{p,q}(\mathcal{X},\mathcal{E}) = \overline{\partial} (\mathcal{A}^{p,q-1}(\mathcal{X},\mathcal{E})) \oplus \mathcal{H}^{p,q}(\mathcal{X},\mathcal{E}) \oplus \overline{\partial}^* (\mathcal{A}^{p,q+1}(\mathcal{X},\mathcal{E}))
\end{equation*}
and $\mathcal{H}^{p,q}(\mathcal{X},\mathcal{E})$ is finite dimensional.
\end{thm}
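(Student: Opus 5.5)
The plan is to reduce the orbifold statement to the classical Hodge theory for strongly elliptic self-adjoint operators on compact manifolds, working chart by chart and exploiting the fact that all objects involved are $G$-invariant. Baily's argument in \cite{B} works on the orbifold directly. The route I would follow mirrors that: build the theory on the Hilbert space of $L^2$ orbifold sections of $\left(\bigwedge^{(p,q)}\mathcal{T}^*\mathcal{X}\right)\otimes\mathcal{E}$, with the inner product $(\cdot,\cdot)$ defined via the orbifold integral. In a chart $(U,G,\varphi)$, an orbifold section is the same as a $G$-invariant section of $E_U$, and by \Cref{lem: Chern connection} the operators $\overline{\partial}$, $\overline{\partial}^*$ and $\Delta''$ are morphisms of orbisheaves. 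Consequently $\Delta''$ acts on $G$-invariant local sections and commutes with the $G$-action.

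\textbf{Step 1: a priori estimates.} Using a finite cover of the compact space $X$ by charts and a $G$-invariant partition of unity, I would establish G\aa rding's inequality
\begin{equation*}
\|s\|_{1}^2 \le C\left((\Delta'' s,s)+\|s\|_0^2\right)
\end{equation*}
and the higher elliptic estimates $\|s\|_{m+2}\le C(\|\Delta'' s\|_m+\|s\|_0)$. Here the Sobolev norms are defined chartwise on the lifts $\varphi^{-1}$, averaged by $1/|G|$. These estimates are local and follow from the manifold case, because $2\Delta''$ is strongly elliptic on each $U$ and the estimates hold on $G$-invariant sections restricted to $U$. Rellich compactness also transfers chartwise, since a $G$-invariant section on $U$ is in particular a section on the manifold $U$.

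\textbf{Step 2: spectral theory and harmonic space.} From Step 1, $(\Delta''+1)^{-1}$ is a compact self-adjoint operator on the $L^2$ completion. Hence $\Delta''$ has discrete spectrum with finite-dimensional eigenspaces, and elliptic regularity (again local, chartwise) shows that the eigensections are smooth orbifold sections. In particular $\mathcal{H}^{p,q}=\ker\Delta''$ is finite dimensional. Let $P$ be the projection onto $\mathcal{H}^{p,q}$ and $G''$ the Green operator, which is the inverse of $\Delta''$ on $(\mathcal{H}^{p,q})^\perp$ and maps smooth sections to smooth sections. This gives
\begin{equation*}
s = Ps + \Delta'' G'' s = Ps + \overline{\partial}(\overline{\partial}^* G'' s) + \overline{\partial}^*(\overline{\partial} G'' s).
\end{equation*}

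\textbf{Step 3: orthogonality.} The three pieces are pairwise orthogonal. We have $(\overline{\partial}a,\overline{\partial}^*b)=(\overline{\partial}^2 a,b)=0$. Also $\Delta'' h=0$ implies $\overline{\partial}h=\overline{\partial}^*h=0$, because $(\Delta'' h,h)=\|\overline{\partial}h\|^2+\|\overline{\partial}^*h\|^2$. This integration by parts requires Stokes' theorem on the orbifold, which is the fact $\int_{\mathcal{X}}d\alpha=0$ recalled earlier, together with the adjointness of $\overline{\partial}^*$ already stated.

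\textbf{Main obstacle.} I expect the main difficulty to be Step 1/2: making the local elliptic regularity and compactness compatible with the gluing. Care is needed so that the chartwise Sobolev norms are equivalent across overlaps via the embeddings $\lambda$, and so that $G$-invariance is preserved by the smoothing and cutoff operators. To handle the latter, I would average any cutoff or mollifier over $G$.
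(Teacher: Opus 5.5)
Your proposal is correct and follows essentially the same route as the paper. The paper gives no independent proof: it applies Baily's general theory of self-adjoint strongly elliptic operators on compact $V$-manifolds to $2\Delta''$, and your outline (chartwise elliptic estimates on $G$-invariant lifts, compact resolvent, Green operator, orthogonality via the adjoint relations) is a sketch of exactly that theory. One point worth making explicit in the regularity step: an orbifold weak solution is only tested against $G$-invariant sections, but since $\Delta''$ commutes with the $G$-action and the lift is $G$-invariant, averaging an arbitrary test section over $G$ shows the lift is a weak solution on all of $U$, so the manifold regularity theorem applies.
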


Note that the Bochner-Kodaira-Nakano identity
\begin{equation} \label{eq: BKN}
\Delta'' = \Delta' + \left [ \sqrt{-1}F_H, \Lambda \right ],
\end{equation}
which holds on holomorphic vector bundles over K\"ahler manifolds, extends to holomorphic orbifold vector bundles over K\"ahler orbifolds. Then, \Cref{thm: Hodge Decomposition} and \ref{eq: BKN} imply the following version of H\"ormander's estimates.

\begin{cor} \label{cor: Hormander} \textnormal{(c.f. \cite{D}, Chapter 7 Lemma 7.2 and Chapter 8 Theorem 4.5)}
Let $\mathcal{L}$ be a holomorphic line bundle over $\mathcal{X}$ and let $h$ be a Hermitian metric on $\mathcal{L}$ with positive curvature. There exists a constant $C$ such that for all $k$ sufficiently large, if $t \in \mathcal{A}^{0,1}(\mathcal{X}, \mathcal{E} \otimes \mathcal{L}^k)$ satisfies $\overline{\partial} t = 0$ there exists $s \in \mathcal{A}^0(\mathcal{X}, \mathcal{E} \otimes \mathcal{L}^k)$ such that
$$
\|s\|^2\leq \frac{C}{k} \|t\|^2.
$$
\end{cor}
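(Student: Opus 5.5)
The plan is the standard $L^2$ argument run on orbifold charts: the Bochner--Kodaira--Nakano identity \eqref{eq: BKN} gives a spectral gap of order $k$ for $\Delta''$ on $(0,1)$-forms, and the Hodge decomposition (\Cref{thm: Hodge Decomposition}) turns that gap into the estimate. The $s$ we construct will satisfy $\overline{\partial} s = t$; this is the intended content of the statement.

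\textbf{Step 1 (pass to $(n,1)$-forms).} Set $\mathcal{F}_k := \mathcal{E}\otimes\mathcal{L}^k\otimes K_{\mathcal{X}}^{-1}$. Here $K_{\mathcal{X}}^{-1}=\bigwedge^n\mathcal{T}^{(1,0)}\mathcal{X}$ is an orbifold line bundle with the metric induced by $\omega$. Contraction with the canonical section identifies $\mathcal{A}^{0,q}(\mathcal{E}\otimes\mathcal{L}^k)$ isometrically with $\mathcal{A}^{n,q}(\mathcal{F}_k)$, compatibly with $\overline{\partial}$, and this identification is chart-by-chart natural, hence a morphism of orbisheaves. We give $\mathcal{F}_k$ the product metric. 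Its Chern curvature is
$$F_{k} = F_H\otimes 1 + F_{K^{-1}} + k\,F_h\,\Id.$$
Since $h$ has positive curvature and $\mathcal{X}$ is compact, there is $c>0$ with $\sqrt{-1}F_h\ge c\,\omega$ on every chart. The first two terms are $k$-independent and bounded, again by compactness, since finitely many charts suffice and all quantities are $G$-invariant.

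\textbf{Step 2 (spectral gap).} On $(n,1)$-forms, the operator $[\sqrt{-1}F_k,\Lambda]$ acts pointwise by the sum of the eigenvalues of the curvature term. This is the usual linear algebra, done in a local chart, which is legitimate because everything is defined chartwise. We therefore get, pointwise,
$$\langle[\sqrt{-1}F_k,\Lambda]u,u\rangle\ge (ck-C_0)|u|^2.$$
Next we apply \eqref{eq: BKN} and integrate over $\mathcal{X}$, using the orbifold integral and the fact that $\int_{\mathcal X} d\alpha=0$. The term $(\Delta' u,u)=\|\nabla^{(1,0)}u\|^2+\|{\nabla^{(1,0)}}^*u\|^2$ is nonnegative, so
$$\|\overline{\partial} u\|^2+\|\overline{\partial}^*u\|^2\ \ge\ \tfrac{ck}{2}\|u\|^2\qquad\text{for }k\ge 2C_0/c.$$
In particular $\mathcal{H}^{n,1}(\mathcal{X},\mathcal{F}_k)=0$ for such $k$.

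\textbf{Step 3 (solve and estimate).} Let $t$ be $\overline{\partial}$-closed. For any $v$ we have $(t,\overline{\partial}^*v)=(\overline{\partial}t,v)=0$, so $t$ is orthogonal to the image of $\overline{\partial}^*$. The harmonic space vanishes by Step 2, so \Cref{thm: Hodge Decomposition} gives $t\in\overline{\partial}\mathcal{A}^{n,0}$. Let $G$ be the Green operator of $\Delta''$, which exists by the same theorem, and set $u=Gt$. Then $\Delta''u=t$. Since $G$ commutes with $\overline{\partial}$, we also have $\overline{\partial}u=0$. Put $s=\overline{\partial}^*u$. Then
$$\overline{\partial}s=\overline{\partial}\,\overline{\partial}^*u=\Delta''u-\overline{\partial}^*\overline{\partial}u=t.$$
For the norm we have
$$\|s\|^2=(u,\overline{\partial}\,\overline{\partial}^*u)=(u,t)\le\|u\|\,\|t\|.$$
Step 2, applied with $\overline{\partial}u=0$, gives $\tfrac{ck}{2}\|u\|^2\le\|s\|^2$. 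Combining the two inequalities yields $\|s\|^2\le \tfrac{2}{ck}\|t\|^2$, so we may take $C=2/c$.

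\textbf{Main obstacle.} The main point needing care is that the whole Bochner--Kodaira--Nakano integration by parts is valid on the orbifold. This requires checking three things: $\Delta'$, $\Delta''$ and the curvature term are morphisms of orbisheaves (cf.\ \Cref{lem: Chern connection}); the formal adjoints are genuine $L^2$-adjoints for the orbifold integral (by the Stokes remark); and the pointwise eigenvalue bound, computed on a chart $U$, is $G$-invariant and so descends. Once these are in place, the argument is identical to the manifold case.
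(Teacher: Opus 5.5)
Your proposal is correct and follows the route the paper indicates. The paper gives no argument beyond saying that the Hodge decomposition (\Cref{thm: Hodge Decomposition}) and the Bochner--Kodaira--Nakano identity \eqref{eq: BKN} imply the estimate, citing Demailly's $L^2$ theory; your twist by $K_{\mathcal{X}}^{-1}$ to $(n,1)$-forms, the order-$k$ spectral gap, and the construction $s=\overline{\partial}^*Gt$ are exactly that standard argument written out, and you are right to read the statement as including $\overline{\partial}s=t$.
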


\section{Local Reproducing Kernels} \label{local reproducing kernels}

In this section, we will recall the notion of local reproducing kernels from \cite{BBS}. Then, following \cite{RT}, we will construct local reproducing kernels for orbifold vector bundles.
Finally, we will show that local reproducing kernels approximate the global Bergman kernels.

Let $U \subseteq \mathbb{C}^n$ be a connected open neighborhood of the origin. Let $H: U \to M_{r \times r}(\mathbb{C})$ be a smooth matrix-valued function on $U$ that is Hermitian and let $\phi: U \to \mathbb{R}$ be a smooth real-valued function on $U$ such that $\omega := \frac{\sqrt{-1}}{2\pi}\partial\overline{\partial}\phi$ is K\"ahler. For any $k \in \mathbb{N}$ a smooth $\mathbb{C}^r$-valued function $s \in C^\infty(U,\mathbb{C}^r)$ on $U$, set
\begin{equation*}
\|s\|_k = \int_U s(x)^t H(x) \overline{s(x)} e^{-k\phi(x)} \frac{\omega^n}{n!}(x)
\end{equation*}
(which could be infinite).  We define the subset $\mathcal{H}_{k}(U) \subseteq C^\infty(U,\mathbb{C}^{r})$ by
    \begin{equation*}
        \mathcal{H}_{k}(U) = \left \{ u \in H^0(U, \mathbb{C}^r) : \|u\|_{k} < \infty \right \}.
    \end{equation*}
Then, $( \cdot, \cdot )_k : \mathcal{H}_{k}(U) \times \mathcal{H}_{k}(U) \to \mathbb{C}$ defined by
\begin{equation*}
(s,t)_k = \int_U s(x)^t H(x) \overline{t(x)} e^{-k\phi(x)} \frac{\omega^n}{n!}(x)
\end{equation*} 
is an inner product on $\mathcal{H}_{k}(U)$. If $W$ is a complex vector space, we can extend the inner product $( \cdot, \cdot)_{k}$ to a pairing $(\cdot, \cdot)_{k} : \mathcal{H}_{k}(U) \times \mathcal{H}_{k}(U) \otimes W \to \overline{W}$ by using the formula
$$
(u, v \otimes w)_{k} = (u,v)_{k} \,\overline{w},
$$
where $\overline{w}$ is the image of $w$ under the conjugation map $W \to \overline{W}$. Fix a smooth cutoff function $\chi : U \to \mathbb{R}$ such that $\chi$ is compactly supported in $U$ and $\chi \equiv 1$ in $\frac{1}{2}U$.

\begin{defn}
Suppose that $\{\mathcal{K}_{k}\}_{k \gg 0}$ is a sequence in $C^\infty\left (U \times U, \mathbb{C}^r \boxtimes \overline{\mathbb{C}^r} \right )$. We say that $\mathcal{K}_{k}$ for $k \gg 0$ are \emph{local reproducing kernels modulo} $O(k^{-N-1})$ \emph{for} $\mathcal{H}_{k}(U)$ if, for all $x \in \frac{1}{2} U$ and for all $u \in \mathcal{H}_{k}(U)$,
    \begin{equation*}
        (\chi u, \mathcal{K}_{k}(\cdot,x))_{k} = u(x) + O\left ( e^{k\phi(x)/2} k^{-N-1} \right ) \|u\|_{k}. 
    \end{equation*}
\end{defn}

The construction of local reproducing kernels involves the following notion.

\begin{defn}
    Let $f : U \to \mathbb{R}$ be a smooth function. We say that $g : U \times U \to \mathbb{C}$ is an \emph{almost analytic extension} of $f$ if, for all $x \in U$, $g(x,\overline{x}) = f(x)$ and $D\overline{\partial} g(x,\overline{x}) = 0$ for any differential operator $D$ on $U \times U$.
    
    If $M = (m_{ab})_{1 \leq a,b \leq r}$ is a smooth $M_{r\times r}(\mathbb{C})$-valued function on $U$, then we say that a smooth $M_{r\times r}(\mathbb{C})$-valued function $\tilde{M} = (\tilde{m}_{ab})_{1 \leq a,b \leq r}$ on $U \times U$ is an \emph{almost analytic extension} of $M$ if the real and imaginary parts of $\tilde{m}_{ab}$ are almost analytic extension of the real and imaginary parts of $m_{ab}$, respectively, for all $0 \leq a,b \leq r$.

    If $f : U \to \mathbb{R}$ is a smooth function and $g : U \times U \to \mathbb{C}$ is an almost analytic extension, we say that $g$ is \emph{symmetric} if $g(x, y) = \overline{g(\overline{y}, \overline{x})}$ for all $x,y \in U$.
\end{defn}

It is shown in \cite{M} and Section 3.3.3.1 of \cite{R} that symmetric almost-analytic extensions exist after replacing $U$ with a smaller open polydisk centered at the origin, if necessary. Let $\psi(x,y)$ be a symmetric almost analytic extension of $\phi(x)$ and let $\overline{H}(x,y)$ be an almost analytic extension of $\overline{H}(x)$. 

\begin{lem} \label{lem: diastasis estimate} \textnormal{(\cite{BBS}, Equation 2.7) (\cite{R}, Lemma 3.8)}
After replacing $U$ with a smaller neighborhood if necessary, there exists a positive constant $\delta > 0$ such that
\begin{equation} \label{eq: diastasis estimate}
    -\phi(x) + \psi(y,\overline{x}) - \phi(y) + \psi(x,\overline{y}) \leq - \delta|x-y|^2
\end{equation}
for all $x,y \in U$.
\end{lem}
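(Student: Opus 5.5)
The plan is to Taylor expand the left-hand side of \eqref{eq: diastasis estimate} in $x$ about the diagonal point $x=y$, show that it vanishes there to second order, and show that its Hessian is $-2\sum_{j,k}\phi_{j\bar k}(y)h_j\bar h_k$. Since $\omega = \frac{\sqrt{-1}}{2\pi}\partial\overline{\partial}\phi$ is K\"ahler, this quadratic form is negative definite, and a uniform cubic remainder bound then gives the estimate on a small enough neighborhood.

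Set
\begin{equation*}
F(x,y) = -\phi(x) + \psi(y,\overline{x}) - \phi(y) + \psi(x,\overline{y}),
\end{equation*}
and view $\psi$ as a function $\psi(z,w)$ of $(z,w)\in U\times U$. Throughout we use the defining property of an almost analytic extension. Every derivative of $\overline{\partial}\psi$ vanishes on $\{w=\overline{z}\}$. So, on that set, $\psi$ behaves to infinite order like a function that is holomorphic in $z$ and in $w$. In particular, differentiating $\psi(x,\overline{x}) = \phi(x)$ gives
\begin{equation*}
\partial_{z_j}\psi(y,\overline{y}) = \partial_j\phi(y), \qquad \partial_{w_k}\psi(y,\overline{y}) = \partial_{\bar k}\phi(y).
\end{equation*}
Differentiating once more gives
\begin{equation*}
\partial_{z_j}\partial_{z_k}\psi = \partial_j\partial_k\phi, \qquad \partial_{z_j}\partial_{w_k}\psi = \partial_j\partial_{\bar k}\phi, \qquad \partial_{w_j}\partial_{w_k}\psi = \partial_{\bar j}\partial_{\bar k}\phi
\end{equation*}
on the diagonal. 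All derivatives involving $\partial_{\bar z}$ or $\partial_{\bar w}$ vanish there.

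Now fix $y$ and write $x = y + h$.
\begin{itemize}
\item The diagonal value is $F(y,y) = -2\phi(y) + 2\psi(y,\overline{y}) = 0$.
\item For the first derivatives in $h$ at $h=0$, the $\partial_{x_j}$-derivative is $-\partial_j\phi(y) + \partial_{z_j}\psi(y,\overline{y}) = 0$. The term $\psi(y,\overline{x})$ contributes nothing here, since it depends on $x$ only through $\overline{x}$. The $\partial_{\bar x_j}$-derivative is $-\partial_{\bar j}\phi(y) + \partial_{w_j}\psi(y,\overline{y}) = 0$, using that $\partial_{\bar z}\psi$ vanishes on the diagonal.
\item For the pure second derivatives, $\partial_{x_j}\partial_{x_k}F = -\phi_{jk} + \psi_{z_jz_k} = 0$, and likewise $\partial_{\bar x_j}\partial_{\bar x_k}F = 0$.
\item For the mixed second derivatives, $\partial_{x_j}\partial_{\bar x_k}F = -\phi_{j\bar k}(y)$. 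Here $\psi(y,\overline{x})$ has no $x$-dependence, and $\psi(x,\overline{y})$ has its $\bar x$-derivatives vanishing at $x=y$.
\end{itemize}
Hence
\begin{equation*}
F(y+h,y) = -2\sum_{j,k}\phi_{j\bar k}(y)\,h_j\bar h_k + R(y,h).
\end{equation*}

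It remains to bound the remainder. Shrink $U$ to a relatively compact polydisk $U_0$ on which $\psi$ is defined and smooth with bounded third derivatives. Taylor's theorem in the real variables $(\Real h, \operatorname{Im} h)$ then gives $|R(y,h)| \le C|h|^3$ uniformly for $y, y+h \in U_0$. Since $(\phi_{j\bar k})$ is positive definite and continuous on $\overline{U_0}$, there is $c>0$ with $2\sum\phi_{j\bar k}h_j\bar h_k \ge 2c|h|^2$. Finally, replace $U$ by a polydisk of radius $r$ with $2rC \le c$, so that $|h| \le 2r$ for all $x,y$ in it. Then $F \le -2c|h|^2 + C|h|^3 \le -c|x-y|^2$, and we may take $\delta = c$.

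The main obstacle I expect is the bookkeeping for the derivatives of an almost analytic extension. The key point is that the first- and second-order identities above hold exactly on the diagonal, and not merely up to error, because every derivative of $\overline{\partial}\psi$ vanishes there. The only other thing to check is that the constant $C$ in the remainder is uniform in $y$, which is why we pass to a relatively compact subdomain before shrinking.
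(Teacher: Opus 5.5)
Your proof is correct and is essentially the argument behind the source the paper cites for this lemma (\cite{BBS}, Equation 2.7; the paper gives no proof of its own). You Taylor-expand in $x$ about $x=y$, use that all derivatives of $\overline{\partial}\psi$ vanish on $\{w=\overline{z}\}$ to kill the constant, linear and pure quadratic terms, and conclude from positivity of $(\phi_{j\bar k})$ plus a cubic remainder bound uniform in $y$. There are two harmless slips that affect only the constants. First, the quadratic term is $-\sum_{j,k}\phi_{j\bar k}(y)h_j\bar h_k$, not twice that; for $\phi=|x|^2$, $\psi(z,w)=zw$ the left side is exactly $-|x-y|^2$. Second, $\psi(y,\overline{x})$ does depend on $x$ through $\overline{w}$ when $\psi$ is merely almost analytic, so its $x$-derivatives vanish at $x=y$ by almost analyticity rather than identically, which is all you actually use.
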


The following proposition from \cite{BBS} shows that local reproducing kernels exist. Denote by $\{e_1, \dotsc, e_r\}$ the standard basis vectors for $\mathbb{C}^r$. We identify local sections $t(y,x)$ of $\mathbb{C}^r \boxtimes \overline{\mathbb{C}^r}$ with $M_{r \times r}(\mathbb{C})$-valued functions by writing the $\overline{e_i} \otimes e_j$ component of $t(y,x)$ in the $i,j$-th entry of an $r \times r$ matrix. We write $f = O(k^{-\infty})$ to say that $f = O(k^{-m})$ for any $m$.

\begin{prop} \textnormal{(\cite{BBS}, Proposition 2.7)} \label{prop: local reproducing kernel}
    Let $N \geq 1$ be a fixed integer. After replacing $U$ with a smaller neighborhood of the origin if necessary, there exist smooth matrix-valued functions $\tilde{b}_{q}(x,z) : U \times U \to M_{r \times r}(\mathbb{C})$ such that $\mathcal{K}_{k} : U \times U \to M_{r \times r}(\mathbb{C})$ defined by
    \begin{equation*}
         \overline{\mathcal{K}_{k}(y,x)} = \left ( \tilde{b}_{0} (x,\overline{y}) k^n + \tilde{b}_{1}(x,\overline{y})  k^{n-1} + \dotsb + \tilde{b}_{N}(x,\overline{y}) k^{n-N} \right ) \overline{H}(x,\overline{y})^{-1} e^{k\psi(x,\overline{y})}
    \end{equation*}
    is a local reproducing kernel modulo $O(k^{-N-1})$ for $\mathcal{H}_{k}(U)$. Each $\tilde{b}_{q}$ can be written as a polynomial in the derivatives of $\psi$ and $H$. In particular, for all $x,z \in U$,
    \begin{equation*}
        \tilde{b}_{0}(x,z) = \Id_{r \times r} \hspace{10pt} \text{ and } \hspace{10pt} \tilde{b}_{1}(x,\overline{x}) = \frac{\Scal_\omega(x)}{2} \Id_{E} + \sqrt{-1} \Lambda F_{H}(x).
    \end{equation*}
    Moreover, if $D$ is any differential operator on $U \times U$, then 
    \begin{gather*}
    e^{-k(\phi(x)/2 + \phi(y)/2)} D\overline{\partial}_y \mathcal{K}_{k} \hspace{5pt} \text{ and } \hspace{5pt}  e^{-k(\phi(x)/2 + \phi(y)/2)}  D \partial_x \mathcal{K}_{k}    
    \end{gather*}
    are both $O(k^{-\infty})$.
\end{prop}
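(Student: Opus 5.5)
The plan is to follow the Berman--Berndtsson--Sj\"ostrand construction. We write the kernel as an asymptotic sum times the phase $e^{k\psi(x,\overline{y})}$, with amplitude coefficients fixed by requiring the reproducing property, and then check the reproducing property by stationary phase. Concretely, we make the ansatz
\[
\overline{\mathcal{K}_k(y,x)} = A_k(x,\overline{y})\,\overline{H}(x,\overline{y})^{-1} e^{k\psi(x,\overline{y})}, \qquad A_k = \sum_{q=0}^N \tilde{b}_q k^{n-q}.
\]
Since $\psi$ and $\overline{H}$ are almost analytic, $\overline{\partial}_y \mathcal{K}_k$ and $\partial_x\mathcal{K}_k$ vanish to infinite order on the diagonal $y=x$. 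Combining this with \Cref{lem: diastasis estimate}, the factor $e^{k\psi}$ is controlled by $e^{k(\phi(x)+\phi(y))/2 - k\delta|x-y|^2/2}$. A factor vanishing to order $m$ at the diagonal then yields $O(k^{-m/2})$ after this Gaussian weighting, which gives the final ``moreover'' statement.

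For the reproducing property, fix $x\in\frac12 U$ and $u\in\mathcal{H}_k(U)$, and write
\[
(\chi u,\mathcal{K}_k(\cdot,x))_k = \int_U \chi(y)\, u(y)^t H(y)\, A_k(x,\overline{y})^{t}\cdots \overline{H}(x,\overline{y})^{-1}\, e^{k(\psi(x,\overline{y})-\phi(y))}\,\frac{\omega^n}{n!}(y).
\]
Following \cite{BBS}, I would first change variables to $\theta = \theta(x,y)$, defined by $\psi(x,\overline{y})-\psi(y,\overline{y}) = \theta\cdot(x-y)$ up to almost-analytic corrections. In these variables, Stokes' theorem shows that the form
\[
\int \chi\, u(y)\, g(y,\theta)\, e^{k\theta\cdot(x-y)}\, d\theta\wedge dy
\]
reproduces $u(x)$ exactly for any amplitude $g$ satisfying $g(x,\theta)=1$. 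The error terms come from $\overline{\partial}\chi$, which is supported where $|x-y|$ is bounded below and is therefore $O(e^{k\phi(x)/2}e^{-ck})$, and from almost-analyticity defects, which are $O(k^{-\infty})$.

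The amplitude then has to match this model. Its product with the Jacobian $\det(\partial\theta/\partial\overline{y})$, the volume form, and the matrices $H(y)\overline{H}(x,\overline{y})^{-1}$ must equal a function of $(y,\theta)$ whose value at $y=x$ is $1$, modulo terms that integrate to $O(k^{-N-1})$. We trade $y$-dependence for $\theta$-dependence by integrating by parts in $\theta$ (each factor $(x-y)$ becomes $k^{-1}\partial_\theta$). This gives a recursive triangular system for the $\tilde{b}_q$: the $q$-th equation determines $\tilde{b}_q$ from $\tilde{b}_0,\dots,\tilde{b}_{q-1}$ and derivatives of $\psi$ and $H$, and each $\tilde{b}_q$ is a polynomial in those derivatives. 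At leading order the Jacobian cancels the volume form and the matrices cancel on the diagonal, giving $\tilde{b}_0=\Id$. Here the choice of $\overline{H}^{-1}$ on the correct side must be handled carefully so that the matrix factors cancel. The error is controlled by Cauchy--Schwarz against $\|u\|_k$ together with the Gaussian bound, giving $O(e^{k\phi(x)/2}k^{-N-1})\|u\|_k$.

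The main obstacle is computing $\tilde{b}_1(x,\overline{x})$. I would expand the next order of the recursion at $y=x$, working in normal coordinates where $\partial\overline{\partial}\phi$ is the identity to second order. The scalar part reproduces the classical computation giving $\frac{\Scal_\omega}{2}$. The new term comes from second derivatives of $H$: the Laplacian-type contraction $-\omega^{i\overline{j}}\partial_i\overline{\partial}_j \log H$ appears, and in a frame with $dH(x)=0$ it equals $\sqrt{-1}\Lambda F_H(x)$. Keeping track of the noncommutative ordering of the matrices is the delicate bookkeeping in this step.
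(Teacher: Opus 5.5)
Your route is the Berman--Berndtsson--Sj\"ostrand construction, and that is all the paper relies on here. It gives no proof beyond the citation of \cite{BBS}; in \Cref{lem: recursive formula} it records the resulting recursion $\sum_{l=0}^m \frac{(D_\theta\cdot D_y)^l}{l!}\bigl(\tilde b_{m-l}(x,z)\Delta(x,y,\theta)\bigr)\big|_{y=x}=0$ for $m>0$, where $\Delta=\Delta_0\,\overline H(x,z)^{-1}\overline H(y,z)$ and $\Delta(x,x,\theta)=\Id$. Your argument for the final $O(k^{-\infty})$ assertion is fine: infinite-order vanishing of $\overline\partial$ on the diagonal, combined with the Gaussian bound from \eqref{eq: diastasis estimate}. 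Your treatment of the $\overline\partial\chi$ and almost-analyticity errors is also fine. You should add that the $\tilde b_q$ are themselves almost analytic, since they are built from holomorphic derivatives of $\psi$, of $\overline H$, and of the inverse of $z\mapsto\theta$.

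The step that fails is your model lemma. It is not true that $\int\chi u\,g\,e^{k\theta\cdot(x-y)}\,d\theta\wedge dy$ reproduces $u(x)$ for every amplitude with $g(x,\theta)=1$. Take $g=1+(y_1-x_1)\theta_1$. Write $(x_1-y_1)e^{k\theta\cdot(x-y)}=k^{-1}\partial_{\theta_1}e^{k\theta\cdot(x-y)}$ and integrate by parts in $\theta_1$; the boundary terms involve only $\overline\partial\chi$. The result is $(1+k^{-1})u(x)$, not $u(x)$. Stokes gives exact reproduction only for $g\equiv1$, or more generally for $g$ holomorphic in $y$, independent of $\theta$, with $g(x)=1$.

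The lemma is also inconsistent with the rest of your plan. Since $\Delta(x,x,\theta)=\Id$, it would already be satisfied by $\tilde b_0=\Id$ and $\tilde b_q=0$ for $q\ge1$. There would then be no recursion and no $\frac{\Scal_\omega}{2}$ term.

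The correct formulation is this: an amplitude $a(x,y,\theta)$ is equivalent, modulo $O(k^{-N-1})$, to $\sum_{l\le N}k^{-l}\frac{(D_\theta\cdot D_y)^l}{l!}a\big|_{y=x}$, which is a function of $(x,\theta)$ only. The requirement is that this be identically $1$ in $\theta$, not merely that $a|_{y=x}=1$. Your third paragraph already performs this reduction. So replacing the model lemma by this statement repairs the argument and yields exactly the recursion above.

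It also simplifies your last step. You have $\tilde b_1=-(D_\theta\cdot D_y)\Delta|_{y=x}$. Because $\Delta_0(x,x,\theta)=1$ and $\overline H(x,z)^{-1}\overline H(x,z)=\Id$ for all $\theta$, the cross terms vanish. Hence $\tilde b_1$ splits into the scalar BBS term plus $-\sum_{j,k}\frac{\partial z_k}{\partial\theta_j}\partial_{z_k}\bigl(\overline H^{-1}\partial_{x_j}\overline H\bigr)$ at $z=\overline{x}$. This is a curvature contraction with no matrix-ordering issues at this order.
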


Now, let $\mathcal{X} = (X,\mathcal{U})$ be a compact complex orbifold with a K\"ahler form $\omega$. Let $\mathcal{L}$ be a holomorphic line bundle over $\mathcal{X}$ equipped with a Hermitian metric $h$ such that the curvature form of $h$ is $-2\pi\sqrt{-1} \omega$. Let $\mathcal{E}$ be a holomorphic vector bundle of rank $r$ over $\mathcal{X}$ equipped with a Hermitian metric $H$. For each $k \in \mathbb{N}$, set $\mathcal{E}(k) :=  \mathcal{E} \otimes \mathcal{L}^k$. Fix an orbifold chart $(U,G,\varphi)$ such that $\varphi(0) = x_0$, $G = G_{x_0}$, and $E_U$ and $L_U$ are trivial vector bundles. Note that, by \Cref{lem: orbifold chart}, we can find such an orbifold chart over any point $x_0 \in X$. Moreover, by \Cref{lem: orbifold vector bundle trivialization} we may assume that there exists a representation $\rho^k$ of $G$ such that 
\begin{equation*}
(g^{-1})^* (\tilde{x},v) = (g \cdot \tilde{x}, \rho^k(g) v)
\end{equation*}
for any $(\tilde{x},v) \in E_U \otimes L_U^k$. So, we can view orbifold sections of $\mathcal{E}(k)$ on $U$ as $G$-equivariant $\mathbb{C}^r$-valued functions on $U$. For the sake of convenience, we will write
\begin{equation*}
\sigma(\tilde{x}) = \sigma_U(\tilde{x})
\end{equation*}
for any local section $\sigma$ of an orbifold vector bundle over $\mathcal{X}$. 

Set $\phi := -\log(h)$. Fix a nonnegative integer $N$. For any $k \in \mathbb{N}$, define the subset $\mathcal{H}_{k,G}(U) \subseteq \mathcal{H}_k(U)$ by
\begin{equation*}
\mathcal{H}_{k,G}(U) = \left \{ u \in \mathcal{H}_k(U) :  \text{ $u$ is $G$-equivariant} \right \}.
\end{equation*} 
Then,
\begin{equation*}
( \cdot, \cdot )_{k,G} = \frac{1}{|G|} (\cdot, \cdot)_k
\end{equation*}
is an inner product on $\mathcal{H}_{k,G}(U)$ and we denote the induced norm by $\| \cdot \|_{k,G}$.
Fix a smooth cutoff function $\chi : U \to \mathbb{R}$ such that $\chi$ is compactly supported in $U$, $\chi \equiv 1$ in $\frac{1}{2}U$, and $\chi$ is $G$-invariant. By \Cref{prop: local reproducing kernel}, we can find local reproducing kernels $\mathcal{K}_{k}$ modulo $O(k^{-N-1})$ for $\mathcal{H}_{k}(U)$. We define $\mathcal{K}_{k}^{\av} : U \times U \to M_{r \times r}(\mathbb{C})$ by
\begin{equation*}
    \mathcal{K}_{k}^{\av} (\tilde{y},\tilde{x}) := \frac{1}{|G|} \sum_{g,h \in G} \overline{\rho^k(h^{-1})} \mathcal{K}_{k} (g \cdot \tilde{y}, h \cdot \tilde{x}) \rho^k(g^{-1})^t.
\end{equation*}

\begin{lem} \label{lem: local reproducing kernel}
$\mathcal{K}_{k}^{\av}$ is a local reproducing kernel mod $O(k^{-N-1})$ for the subspace $\mathcal{H}_{k,G}(U) \subseteq \mathcal{H}_{k}(U)$. In other words, for all $\tilde{x} \in 
\frac{1}{2} U$ and for all $u \in \mathcal{H}_{k,G}(U)$,
    \begin{equation*}
        (\chi u, \mathcal{K}_{k}^{\av}(\cdot,\tilde{x}))_{k,G} = u(\tilde{x}) + O\left ( e^{k\phi(\tilde{x})/2} k^{-N-1} \right ) \|u\|_{k,G}. 
    \end{equation*}
\end{lem}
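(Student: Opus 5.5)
The idea is to unwind the averaging, use the $G$-equivariance of $u$ to move the group action from the kernel onto $u$, and then apply the reproducing property of \Cref{prop: local reproducing kernel} at each of the points $h\cdot\tilde{x}$.

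\textbf{Step 1: invariances.} The function $\phi=-\log h$ and the matrix $H$ transform compatibly with $\rho^k$. Concretely, the pointwise pairing $s^t H \overline{t}e^{-k\phi}$ is $G$-invariant when $s,t$ transform by $\rho^k$. The volume form $\omega^n/n!$ and the cutoff $\chi$ are also $G$-invariant. Changing variables $\tilde{y}\mapsto g\cdot\tilde{y}$ is allowed because $U$ is $G$-stable.

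\textbf{Step 2: expand the pairing.} Write out
\begin{equation*}
(\chi u,\mathcal{K}^{\av}_k(\cdot,\tilde{x}))_{k,G}=\frac{1}{|G|^2}\sum_{g,h}\int_U \chi(\tilde{y})\,u(\tilde{y})^t H(\tilde{y})\,\overline{\overline{\rho^k(h^{-1})}\,\mathcal{K}_k(g\tilde{y},h\tilde{x})\,\rho^k(g^{-1})^t}\,e^{-k\phi}\frac{\omega^n}{n!}.
\end{equation*}
Here the conjugated matrix is paired as in the convention identifying sections of $\mathbb{C}^r\boxtimes\overline{\mathbb{C}^r}$ with matrices.

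\textbf{Step 3: absorb $g$ into $u$.} Substitute $\tilde{y}'=g\tilde{y}$ and use $u(g\tilde{y})=\rho^k(g)u(\tilde{y})$. The factor $\rho^k(g^{-1})$ combines with $u$ to give $u(\tilde{y}')$, using the invariance of $H$ from Step 1. Each $g$ then contributes the same term, so the sum over $g$ cancels one factor of $|G|$. What remains is
\begin{equation*}
\frac{1}{|G|}\sum_{h}\rho^k(h^{-1})\,(\chi u,\mathcal{K}_k(\cdot,h\tilde{x}))_k .
\end{equation*}

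\textbf{Step 4: apply the reproducing property.} Since $h\tilde{x}\in\tfrac12U$ (this needs $\tfrac12U$ to be $G$-stable, e.g. a ball for a metric where $G$ acts unitarily), \Cref{prop: local reproducing kernel} gives
\begin{equation*}
(\chi u,\mathcal{K}_k(\cdot,h\tilde{x}))_k=u(h\tilde{x})+O\big(e^{k\phi(h\tilde{x})/2}k^{-N-1}\big)\|u\|_k .
\end{equation*}
Equivariance gives $\rho^k(h^{-1})u(h\tilde{x})=u(\tilde{x})$. Invariance of $\phi$ gives $\phi(h\tilde{x})=\phi(\tilde{x})$. Averaging over $h$ yields $u(\tilde{x})$ plus an error $O(e^{k\phi(\tilde{x})/2}k^{-N-1})\|u\|_k$. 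Two bounds are needed to keep this error uniform:
\begin{itemize}
\item $\rho^k(h^{-1})$ is unitary with respect to $H(\tilde{x})$, which follows from invariance of the metric, so it does not enlarge the error.
\item $\|u\|_k=|G|^{1/2}\|u\|_{k,G}$ differs from $\|u\|_{k,G}$ only by a constant factor.
\end{itemize}

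\textbf{Main obstacle.} The delicate point is the bookkeeping of conjugates and transposes in Step 3. One has to check that the factor $\rho^k(g^{-1})^t$ inside the conjugate pairs correctly with $H$ and $u$, which is exactly where the invariance $\rho^k(g)^tH(g\tilde{y})\overline{\rho^k(g)}e^{-k\phi(g\tilde{y})}=H(\tilde{y})e^{-k\phi(\tilde{y})}$ is used. I would verify this identity first, entrywise, using the matrix convention fixed before \Cref{prop: local reproducing kernel}.
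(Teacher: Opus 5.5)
Your proposal is correct and follows the paper's proof essentially step for step. The paper likewise uses the invariance $\rho^k(g^{-1})^*\overline{H}(\tilde{y})e^{-k\phi(\tilde{y})} = \overline{H}(g\cdot\tilde{y})e^{-k\phi(g\cdot\tilde{y})}\rho^k(g)$, together with $\rho^k(g)u(\tilde{y}) = u(g\cdot\tilde{y})$ and the $G$-invariance of $\chi$ and $\omega$, to eliminate $g$; it then applies \Cref{prop: local reproducing kernel} at $h\cdot\tilde{x}$ and uses equivariance to reach $\frac{1}{|G|^2}\sum_{g,h}u(\tilde{x}) = u(\tilde{x})$, and your remarks on the $G$-stability of $\tfrac12 U$ and on the harmless constants in the error term make explicit what the paper leaves implicit (strictly, $\rho^k(h^{-1})$ is an isometry from the fiber at $h\cdot\tilde{x}$ to the fiber at $\tilde{x}$ rather than unitary for $H(\tilde{x})$ alone, which is all you need).
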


\begin{proof}

Let $u \in \mathcal{H}(U)_{k,G}$ be given. Then, for any $g,h \in G$,
\begin{equation*}
\begin{split}
& \int_U \rho^{k}(h^{-1}) \overline{\mathcal{K}_{k} (g \cdot \tilde{y}, h \cdot \tilde{x})} \rho^{k}(g^{-1})^* \overline{H}(\tilde{y})e^{-k\phi(\tilde{y})} u(\tilde{y}) \chi(\tilde{y}) \frac{\omega^n}{n!} (\tilde{y})\\
= \hspace{2pt} & \rho^{k}(h^{-1})\int_U \overline{\mathcal{K}_{k} (g \cdot \tilde{y}, h \cdot \tilde{x})}  \overline{H}(g \cdot \tilde{y}) e^{-k\phi(g \cdot \tilde{y})} \rho^{k} (g) u(\tilde{y}) \chi(\tilde{y}) \frac{\omega^n}{n!}(\tilde{y}) \\
= \hspace{2pt} & \rho^{k}(h^{-1})\int_U \overline{\mathcal{K}_{k} (g \cdot \tilde{y}, h \cdot \tilde{x})}  \overline{H}(g \cdot \tilde{y}) e^{-k\phi(g \cdot \tilde{y})} u(g \cdot \tilde{y}) \chi(\tilde{y}) \frac{\omega^n}{n!} (\tilde{y}) \\
= \hspace{2pt} & \rho^{k}(h^{-1})\int_U \overline{\mathcal{K}_{k} (g \cdot \tilde{y}, h \cdot \tilde{x})}  \overline{H}(g \cdot \tilde{y}) e^{-k\phi(g \cdot \tilde{y})} u(g \cdot \tilde{y}) \chi(g \cdot \tilde{y}) \frac{(g^*\omega)^n}{n!} (\tilde{y})\\
= \hspace{2pt} & \rho^{k}(h^{-1})\int_U \overline{\mathcal{K}_{k} (\tilde{y}, h \cdot \tilde{x})}  \overline{H}(\tilde{y}) e^{-k\phi(\tilde{y})} u(\tilde{y}) \chi(\tilde{y}) \frac{\omega^n}{n!} (\tilde{y}) \\
= \hspace{2pt} & \rho^{k}(h^{-1}) u(h \cdot \tilde{x}) + \rho^{k}(h^{-1}) O \left ( e^{k\phi(h \cdot \tilde{x})/2} k^{-N-1} \right ) \|u\|_{k,G} \\
= \hspace{2pt} & u(\tilde{x}) + O\left ( e^{k\phi(\tilde{x})/2} k^{-N-1} \right ) \|u\|_{k,G}.
\end{split}
\end{equation*}
As a result, up to an error term of $O\left ( e^{k\phi(\tilde{x})/2} k^{-N-1} \right ) \|u\|_{k,G}$,
\begin{equation*}
     (\chi u, \mathcal{K}_{k}^{\av}(\cdot,\tilde{x}))_{k,G} = \frac{1}{|G|^2} \sum_{g,h \in G} u(\tilde{x}) = u(\tilde{x}).
\end{equation*}

\end{proof}

We will use \Cref{lem: local reproducing kernel} to show that the local reproducing kernels approximate the global Bergman Kernel up to an error term. To do this, we will need the following bound on smooth sections of holomorphic orbifold vector bundles.

\begin{prop} \textnormal{(c.f. \cite{RT}, Corollary 4.4)} \label{prop: standard estimate}
There exists a constant $C$ such that, for any section $s \in \mathcal{A}^0(\mathcal{X}, \mathcal{E}(k))$ and $x \in X$, we have
\begin{equation} \label{eq: standard estimate}
\|s(x)\|_{H \otimes h^k(x)} \leq C(k^{-\frac{1}{2}}\|\overline{\partial} s\|_{C^0(\mathcal{X})} + k^{\frac{n}{2}} \|s\|_{L^2(\mathcal{X})}).
\end{equation}
\end{prop}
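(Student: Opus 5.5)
The plan is to reduce the estimate to a purely local statement on a single orbifold chart, and then combine the $L^2$-minimal solution of $\overline{\partial}$ from \Cref{cor: Hormander} with a local mean-value estimate on balls of radius $k^{-1/2}$. Since $X$ is compact, I cover it by finitely many charts $(U,G,\varphi)$ centered at points $x_0$ with $G=G_{x_0}$, chosen so that $E_U$ and $L_U$ are trivial and $\phi=-\log h$ is strictly plurisubharmonic on $U$. It then suffices to prove \eqref{eq: standard estimate} for $x=\varphi(\tilde x)$ with $\tilde x$ ranging over a fixed compact subset of $\frac12 U$, with a constant uniform in $\tilde x$ and $k$. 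Because the orbifold $L^2$-norm of $s$ over $\varphi(U)$ is $|G|^{-1}$ times the $L^2$-norm of $s_U$ over $U$, the group $G$ costs at most a fixed constant. From here on we may work with the $G$-equivariant function $s_U$ on $U\subset\mathbb{C}^n$.

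The core is the local estimate. I change the trivialization of $L_U$ near $\tilde x$ so that $\phi(\tilde y)=\phi(\tilde x)+|\tilde y-\tilde x|^2+O(|\tilde y-\tilde x|^3)$, after replacing the pluriharmonic part of the Taylor expansion by a holomorphic gauge. On the ball $B=B(\tilde x,k^{-1/2})$ the weight $k\phi-k\phi(\tilde x)$ is then bounded uniformly in $k$, and $H$ is comparable to a constant matrix. So the weighted norms $|s|^2_{H\otimes h^k}$ on $B$ are uniformly comparable to $e^{-k\phi(\tilde x)}|s|^2$ in the new frame. Rescaling $\tilde y=\tilde x+k^{-1/2}w$ turns $B$ into the unit ball, and the function $v(w)=s(\tilde x+k^{-1/2}w)$ satisfies $\overline\partial_w v=k^{-1/2}(\overline\partial s)(\tilde x+k^{-1/2}w)$. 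I then apply the standard estimate on the unit ball for a $\mathbb{C}^r$-valued function,
\[
|v(0)|\le C\bigl(\|\overline\partial v\|_{C^0(B_1)}+\|v\|_{L^2(B_1)}\bigr),
\]
which follows from the Cauchy--Pompeiu (Bochner--Martinelli) representation with a cutoff. Undoing the scaling multiplies $\|v\|_{L^2(B_1)}$ by $k^{n/2}$, since $dV_w=k^{n}dV_{\tilde y}$ in real dimension $2n$, so the $L^2$ norm picks up $k^{n/2}$. The $\overline\partial$ term picks up the factor $k^{-1/2}$. Reinserting the weight $e^{-k\phi(\tilde x)/2}$ gives exactly \eqref{eq: standard estimate} at $x$.

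The remaining work is bookkeeping on the norms. The $C^0$-norm of $\overline\partial s$ must be the pointwise norm taken with respect to $H\otimes h^k$ and $\omega$, and the comparability constants must not depend on $k$; this holds because the weight oscillates by $O(1)$ on $B$. The gauge change must be holomorphic so that it commutes with $\overline\partial$. Neither \Cref{cor: Hormander} nor the orbifold structure is needed beyond the lift to $U$, since the estimate is pointwise.

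The main obstacle is the uniformity of the normalization of $\phi$ as $\tilde x$ varies, including at points near the orbifold locus where one might worry about the chart. I handle this by working on the fixed chart $U$ at every point of its compact subset. The holomorphic gauge at $\tilde x$ is then given by the explicit degree-two polynomial from the Taylor expansion of $\phi$ at $\tilde x$, whose coefficients depend smoothly on $\tilde x$. This makes all constants uniform, and finitely many charts cover $X$.
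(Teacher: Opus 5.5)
Your argument is correct and is essentially the proof the paper defers to (Ross--Thomas, Corollary 4.4, which follows the Berman--Berndtsson--Sj\"ostrand local estimate): lift to a chart, remove the pluriharmonic part of $\phi$ by a holomorphic gauge, apply the Cauchy--Pompeiu type estimate on a ball of radius comparable to $k^{-1/2}$, and absorb the factor $|G|$ from the orbifold $L^2$ norm. As you observe at the end, \Cref{cor: Hormander} plays no role here despite your opening sentence, and to cover all $k$ you should use balls of radius $r_0 k^{-1/2}$ with $r_0$ fixed small enough that they stay inside the chart.
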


Let $H^0(\mathcal{X},\mathcal{E} (k))$ be the set of global holomorphic sections of $\mathcal{E} (k)$. By \Cref{thm: Hodge Decomposition}, $H^0(\mathcal{X},\mathcal{E} (k))$ is finite dimensional.
Set $d_k = \dim(H^0 (\mathcal{X}, \mathcal{E}(k)))$. Choose an orthonormal basis $\{s_i\}_{i=1}^{d_k}$ of $H^0 (\mathcal{X}, \mathcal{E} (k))$. The Bergman kernel $K_k$ is the section of the vector bundle $\mathcal{E}(k) \boxtimes  \overline{\mathcal{E}(k)}$ given by
$$
K_k(x,y) = \sum_{i=1}^{d_k} s_i(x) \otimes \overline{s_i}(y)
$$
and satisfies the property that, for any element $s \in H^0(\mathcal{X},\mathcal{E}(k))$ and $x \in X$,
$$
s(x) = (s, K_k(\cdot,x)).
$$
The Bergman function $B_k$ is the section of $\mathcal{E}(k)^* \boxtimes \mathcal{E}(k)$ obtained from $\overline{K_k}$ by using the isomorphism $ \overline{\mathcal{E}(k)} \cong \mathcal{E}(k)^*$ induced by the metric $H\otimes h^k$.

\begin{prop} \label{prop: C^0 expansion}
There is a neighborhood $U_0 \subseteq \frac{1}{4}U$ of the origin such that
\begin{equation*}
K_k(x,y) = \mathcal{K}_k^{\av}(x,y) + O(k^{n-N-1})
\end{equation*}
on $\varphi(U_0) \times \varphi(U_0)$ with respect to the $C^0$ topology. Here, the error term $O(k^{n-N-1})$ is bounded with respect to the metric induced by $H\otimes h^k$.
\end{prop}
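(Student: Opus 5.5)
We follow the standard argument of \cite{BBS} and \cite{RT}: produce from the local kernel an almost-holomorphic global section, correct it with H\"ormander's estimate, and compare it with the Bergman kernel by the reproducing property.

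\textbf{Step 1: the cut-off section.} Fix $x \in \varphi(U_0)$ with lift $\tilde x$, and a vector $v$ in the fibre. We regard $\chi(\cdot)\,\mathcal{K}_k^{\av}(\cdot,\tilde x)\bar v$ as a $G$-equivariant section over $U$. The averaging in the definition of $\mathcal{K}_k^{\av}$ makes it $G$-equivariant in the first variable. Since $\chi$ is $G$-invariant and compactly supported, it extends by zero to a global smooth orbifold section $\sigma_x$ of $\mathcal{E}(k)$.

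\textbf{Step 2: estimate $\overline\partial\sigma_x$.} We have
\[
\overline\partial\sigma_x=\overline\partial\chi\cdot\mathcal{K}_k^{\av}+\chi\,\overline\partial\mathcal{K}_k^{\av}.
\]
The second term is $O(k^{-\infty})e^{k(\phi(\tilde y)+\phi(\tilde x))/2}$ by \Cref{prop: local reproducing kernel}; the same bound holds for the $G$-translates, because the terms are composed with biholomorphisms $g$. In the first term $\overline\partial\chi$ is supported where $|\tilde y|\ge \tfrac12 r_U$. For $\tilde x\in U_0\subseteq \tfrac14 U$ we get $|g\tilde y-h\tilde x|\ge c>0$. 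The diastasis estimate \Cref{lem: diastasis estimate} then gives
\[
|\mathcal{K}_k(g\tilde y,h\tilde x)|_{h^k}\lesssim k^n e^{-\delta k c^2/2}.
\]
So in norm $\overline\partial\sigma_x=O(k^{-\infty})$, both in $C^0$ and in $L^2$.

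\textbf{Step 3: H\"ormander correction.} Apply \Cref{cor: Hormander} to $t=\overline\partial\sigma_x$, taking $s$ to be the $L^2$-minimal solution of $\overline\partial s=t$, so that $s\perp H^0$. This gives $\|s\|_{L^2}=O(k^{-\infty})$. Then $\sigma_x-s$ is holomorphic. Next, \Cref{prop: standard estimate} applied to $s$ gives the pointwise bound $|s(y)|\le C(k^{-1/2}\|t\|_{C^0}+k^{n/2}\|s\|_{L^2})=O(k^{-\infty})$.

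\textbf{Step 4: identify $\sigma_x-s$ with the projection.} Since $s\perp H^0$, the section $\sigma_x-s$ is the Bergman projection $P\sigma_x$. For any orthonormal basis element $s_i$ (restricted to $U$, it lies in $\mathcal{H}_{k,G}$ up to the volume factor $1/|G|$), \Cref{lem: local reproducing kernel} gives
\[
(s_i,\sigma_x)=\overline{(\chi s_i,\mathcal{K}^{\av}_k(\cdot,\tilde x)\bar v)_{k,G}}=\langle s_i(x),v\rangle+O(e^{k\phi/2}k^{-N-1}).
\]
Hence
\[
P\sigma_x(y)=\sum_i s_i(y)\,\overline{(\sigma_x,s_i)}=K_k(y,x)\bar v+\sum_i s_i(y)\,O(k^{-N-1}).
\]
The error is controlled by Cauchy--Schwarz applied to the reproducing estimate: the error functional has norm $O(k^{-N-1})$, so the error is $O(k^{-N-1})\,|K_k(y,y)|^{1/2}$ pointwise. \Cref{prop: standard estimate} gives $|K_k(y,y)|\le Ck^n$ (the supremum of $|s(y)|^2$ over unit holomorphic $s$). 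Thus the error is $O(k^{n/2-N-1})\le O(k^{n-N-1})$.

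\textbf{Step 5: conclude.} Combining Steps 3 and 4 for $y\in U_0$, where $\chi\equiv1$, gives
\[
K_k(y,x)\bar v=\mathcal{K}^{\av}_k(y,\tilde x)\bar v+O(k^{n-N-1})
\]
in the $H\otimes h^k$ norms, uniformly in $x,y\in\varphi(U_0)$ and unit $v$.

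\textbf{Main obstacle.} The delicate part is Step 2's off-diagonal decay for the averaged kernel. Every translate $g\tilde y$, $h\tilde x$ must stay in the region where the diastasis estimate holds. This forces the choice $U_0\subseteq\tfrac14U$, a $G$-invariant polydisc, and requires checking that the $\rho^k$ factors are unitary for the fixed-point metric, so that the norms are unaffected.
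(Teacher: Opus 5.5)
Your proposal is correct and follows essentially the paper's route. You cut off the averaged local kernel, show its $\overline{\partial}$ is $O(k^{-\infty})$ via the diastasis estimate and almost-holomorphicity, identify the discrepancy with the $L^2$-minimal H\"ormander solution (the projection onto $H^0(\mathcal{X},\mathcal{E}(k))^\perp$), and control everything pointwise with \Cref{prop: standard estimate}. The only differences are cosmetic. You bound the reproducing error $\sum_i s_i(y)\,\epsilon(s_i)$ by a Cauchy--Schwarz/Riesz-representative argument, whereas the paper picks an orthonormal basis adapted to the evaluation map so that only $r$ terms survive; both give $O(k^{n/2-N-1})$. Also, the conjugations in your Step 4 need tidying: with $(\cdot,\cdot)$ linear in the first slot, the projection is $P\sigma_x=\sum_i(\sigma_x,s_i)\,s_i$.
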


\begin{proof}

We will provide a sketch of the proof; full details can be found in Theorem 3.1 of \cite{BBS} and Proposition 4.6 of \cite{RT}. We write
\begin{equation*}
K_k(\tilde{x},\tilde{y}) = (K_k)_{U \times U}(\tilde{x},\tilde{y}).
\end{equation*}
Then,
\begin{equation*}
\begin{split}
\left ( \chi(\cdot) K_k(\cdot, \tilde{x}),  \mathcal{K}_k^{\av}(\cdot, \tilde{y}) \right )_{k,G} & =  K_k(\tilde{y},\tilde{x}) + \sum_{i=1}^{d_k}  O(e^{k\phi(\tilde{y})/2} k^{-N-1}) s_i(\tilde{x}) \|s_i\|_{k,G} \\
& = K_k(\tilde{y},\tilde{x}) + O(e^{k\phi(\tilde{y})/2}e^{k\phi(\tilde{x})/2} k^{-N-1} )\sum_{i=1}^{d_k} \|s_i(\tilde{x})\|_{H\otimes h^k(\tilde{x})}.
\end{split}
\end{equation*}
Since the evaluation map
\begin{equation*}
\ev_{\tilde{x}} (s) = s(\tilde{x})
\end{equation*}
has a kernel of dimension at least $d_k -r$, \Cref{prop: standard estimate} implies that
\begin{equation*}
\sum_{i=1}^{d_k} \|s_i(\tilde{x})\|_{H\otimes h^k(\tilde{x})} = O(k^{\frac{n}{2}}).
\end{equation*}
Therefore,
\begin{equation*}
\left ( \chi(\cdot) K_k(\cdot, x),  \mathcal{K}_k^{\av}(\cdot, y) \right ) =  K_k(y,x) + O(e^{k\phi(y)/2}e^{k\phi(x)/2} k^{\frac{n}{2} -N-1}).
\end{equation*}
This implies that, up to an error term of $O(e^{k\phi(y)/2}e^{k\phi(x)/2} k^{n-N-1})$,
\begin{equation*}
K_k(x,y) = \chi(x)\mathcal{K}_k^{\av}(x,y) - u_y(x),
\end{equation*}
where
\begin{equation*}
u_y(x) =  \chi(x)  \mathcal{K}_k^{\av}(x, y) - \left ( \chi(\cdot)  \mathcal{K}_k^{\av}(\cdot, y), K_k(\cdot, x) \right ). 
\end{equation*}
We observe that $u_y$ is precisely the orthogonal projection of $\chi(\cdot)  \mathcal{K}_k^{\av}(\cdot, y)$ onto $H^0(\mathcal{X},\mathcal{E}(k))^\perp$. 

Fix an open neighborhood $U_0$ of the origin such that $U_0 \subseteq \frac{1}{4}U$ and $U_0$ is $G$-invariant. By \Cref{lem: diastasis estimate}, there exists $\delta > 0$ such that
\begin{equation*}
 \left \| (\overline{\partial}\chi) \mathcal{K}_k^{\av}(\cdot,y) \right \|_{C^0(\mathcal{X})} = O(e^{-\delta k})
\end{equation*}
for all $y \in U_0$. Additionally, by
\Cref{prop: local reproducing kernel}, 
\begin{equation*}
\left \| \chi(\cdot) \overline{\partial} \mathcal{K}_k^{\av}(\cdot,y) \right \|_{C^0(\mathcal{X})} = O\left ( \frac{1}{k^\infty}  \right ).
\end{equation*}
As a result,
\begin{equation*}
\left \| \overline{\partial}\left (  \chi(\cdot)  \mathcal{K}_k^{\av}(\cdot,y) \right ) \right \|_{C^0(\mathcal{X})} = O\left ( \frac{1}{k^\infty}  \right ).
\end{equation*}
Then, by \Cref{cor: Hormander} and \Cref{prop: standard estimate},
\begin{equation*}
\|u_y(x)\| = O \left ( \frac{1}{k^\infty} \right ).
\end{equation*}

\end{proof}

In fact, the local reproducing kernels $\mathcal{K}_k^{\av}$ approximate the global Bergman kernels $K_k$ with respect to the $C^\infty$ topology. To show this, we will need the following lemma.

\begin{lem} \textnormal{(\cite{RT}, Lemma 4.9)} \label{lem: derivatives}
Let $U_0 \subseteq U$ be an open subset. Suppose that $\{f_k(\tilde{x},\tilde{y})\}$ is a sequence of functions on $U_0 \times U_0$ such that 
\begin{equation*}
(\overline{\partial}_{\tilde{x}} f_k)e^{-k(\phi(\tilde{x})+\phi(\tilde{y}))/2} \hspace{10pt} \text{ and } \hspace{10pt} (\partial_{\tilde{y}} f_k)e^{-k(\phi(\tilde{x})+\phi(\tilde{y}))/2} 
\end{equation*} 
are both $O(k^{-\infty})$. Suppose also that
\begin{equation*}
f_k e^{-k(\phi(\tilde{x})+\phi(\tilde{y}))/2} = O(k^m).
\end{equation*}
Then, for any differential operator $D$ in $\tilde{x}$ and $\tilde{y}$ of order $p$,
\begin{equation*}
(Df_k) e^{-k(\phi(\tilde{x})+\phi(\tilde{y})))/2} = O(k^{m+p})
\end{equation*}
on $\frac{1}{2} U_0 \times \frac{1}{2} U_0$.
\end{lem}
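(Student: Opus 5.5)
The plan is to prove the lemma from Cauchy-type estimates in rescaled coordinates, reducing to the case where $f_k$ is exactly holomorphic in $\tilde{x}$ and antiholomorphic in $\tilde{y}$. I would proceed in three steps.

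\emph{Step 1: change of weight.} Replace the weight $e^{-k(\phi(\tilde{x})+\phi(\tilde{y}))/2}$ by the almost-analytic weight $e^{-k\psi(\tilde{x},\overline{\tilde{y}})}$, where $\psi$ is the symmetric almost analytic extension already fixed. Set
\[
g_k(\tilde{x},\tilde{y}) := f_k(\tilde{x},\tilde{y})\, e^{-k\psi(\tilde{x},\overline{\tilde{y}})}.
\]
By \Cref{lem: diastasis estimate}, $|e^{k\psi(\tilde{x},\overline{\tilde{y}})}| e^{-k(\phi(\tilde{x})+\phi(\tilde{y}))/2} \le e^{-\delta k|\tilde{x}-\tilde{y}|^2/2}$. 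Since $\overline{\partial}\psi$ vanishes to infinite order on the diagonal, $\overline{\partial}_{\tilde{x}} g_k$ and $\partial_{\tilde{y}} g_k$ are $O(k^{-\infty})$ near the diagonal. Away from the diagonal the Gaussian factor makes every quantity we care about $O(e^{-ck})$. I would handle the general off-diagonal point directly, noting that all hypotheses are pointwise bounds in the weighted norm.

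\emph{Step 2: rescaled Cauchy estimates.} Fix $(\tilde{x}_0,\tilde{y}_0)\in\frac12U_0\times\frac12U_0$ and work on the polydisc $P$ of radius $k^{-1/2}$ centred there. On $P$, Taylor expansion of $\phi$ at $\tilde{x}_0$ (resp.\ $\tilde{y}_0$) writes $k\phi$ as
\[
k\,\mathrm{Re}(\text{holomorphic polynomial}) + O(1).
\]
So after multiplying $f_k$ by $e^{-k h_{\tilde{x}_0}(\tilde{x})/2}\,\overline{e^{-k h_{\tilde{y}_0}(\tilde{y})/2}}$ for suitable holomorphic $h$'s, the weighted function $F_k$ is bounded by $O(k^m)$ on $P$. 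It also has $\overline{\partial}_{\tilde{x}}F_k$ and $\partial_{\tilde{y}}F_k$ equal to $O(k^{-\infty})$ there.

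In rescaled coordinates $\tilde{x}=\tilde{x}_0+k^{-1/2}z$, $F_k$ becomes a function on the unit polydisc that is holomorphic in $z$ and antiholomorphic in $w$ up to $O(k^{-\infty})$. I would decompose $F_k = A_k + R_k$. Here $A_k$ is exactly holomorphic/antiholomorphic, obtained by solving $\overline{\partial}$ with the standard polydisc solution operator (or by Bochner--Martinelli), and $R_k$ is $O(k^{-\infty})$ in $C^p$ on a slightly smaller polydisc. This uses interior elliptic estimates for $\overline{\partial}$, whose constants are independent of $k$ after rescaling.

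The Cauchy estimate then bounds all $z,w$-derivatives of $A_k$ of order $p$ by $O(k^m)$. Undoing the rescaling costs $k^{p/2}$ per order-$p$ derivative. Undoing the multiplication by $e^{\pm k h/2}$ introduces factors of $k\,\partial h = O(k\cdot k^{-1/2}) = O(k^{1/2})$ on $P$ per derivative. Either way each derivative costs at most $k^{1/2}$, and certainly at most $k$, which gives the claimed $O(k^{m+p})$.

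\emph{Main obstacle.} The hard part is the uniform $\overline{\partial}$-correction in Step 2. It requires the $O(k^{-\infty})$ hypotheses to survive weighting and rescaling in $C^\ell$ norms, not just $C^0$. The hypotheses only control $\overline{\partial}f_k$ in $C^0$, so I would first apply the argument to the first derivative and then bootstrap, or rely on the fact that in our applications the $O(k^{-\infty})$ bounds hold for all derivatives, as in \Cref{prop: local reproducing kernel}. Since the statement only claims the crude bound $k^{m+p}$, I expect some slack to absorb losses from the bootstrap.
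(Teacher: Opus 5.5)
The paper gives no proof of this lemma; it is quoted from \cite{RT}, so your argument has to stand on its own. Your Step 2 is the right mechanism, and it works at every point of $\frac12U_0\times\frac12U_0$, on or off the diagonal. There you remove the pluriharmonic part of $\phi$ at $\tilde x_0$ and at $\tilde y_0$ with holomorphic quadratic Taylor polynomials, rescale by $k^{-1/2}$, and apply Cauchy estimates.

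\textbf{The gap.} The step you call the main obstacle is a real gap, and the bootstrap you suggest cannot close it. If the $O(k^{-\infty})$ hypotheses are only pointwise bounds, as the statement literally says, the conclusion is false. Take $n=1$, $\phi(x)=|x|^2$ (so $\psi(x,\bar y)=x\bar y$), a bump $\chi\in C^\infty_c(\mathbb{R})$, a point $a$, and
\[
f_k(x,y)=e^{kx\bar y}\bigl(1+r_k(x)\bigr),\qquad r_k(x)=e^{-k}\,\overline{(x-a)}\,\chi\bigl(e^{4k}|x-a|^2\bigr).
\]
\begin{itemize}
\item $\partial_y f_k=0$.
\item With $t=e^{4k}|x-a|^2$ one has $\overline{\partial}_x r_k=e^{-k}\bigl(\chi(t)+t\chi'(t)\bigr)$.
\item Since $|e^{kx\bar y}|e^{-k(|x|^2+|y|^2)/2}=e^{-k|x-y|^2/2}$, this gives $|\overline{\partial}_x f_k|e^{-k(|x|^2+|y|^2)/2}\le Ce^{-k}$ and $|f_k|e^{-k(|x|^2+|y|^2)/2}\le C$.
\item Yet $\partial_x^2 r_k=e^{7k}\,\overline{(x-a)}^{\,3}\chi''(t)$ has size $e^{k}$ where $|x-a|\sim e^{-2k}$.
\end{itemize}
So $(\partial_x^2 f_k)e^{-k|x|^2}$ on the diagonal is not $O(k^2)$.

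\textbf{The fix.} You must commit to your second option and read the hypotheses as follows: $e^{-k(\phi(\tilde x)+\phi(\tilde y))/2}D\overline{\partial}_{\tilde x}f_k$ and $e^{-k(\phi(\tilde x)+\phi(\tilde y))/2}D\partial_{\tilde y}f_k$ are $O(k^{-\infty})$ for every differential operator $D$. This is what is available where the lemma is used. $K_k$ is exactly holomorphic in its first variable and antiholomorphic in its second, and \Cref{prop: local reproducing kernel} supplies bounds of precisely this form for $\mathcal{K}_k$, hence for $\mathcal{K}^{\av}_k$.

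With that reading the step is routine.
\begin{itemize}
\item In rescaled coordinates a derivative falling on $e^{-kh/2}$ costs only $O(k^{1/2})$. So the rescaled $\overline{\partial}F_k$ is $O(k^{-\infty})$ in every $C^\ell$ norm on the unit polydisc $\mathbb{D}$.
\item The iterated Cauchy--Pompeiu formula, with a cutoff so that derivatives of the area integrals land on $\overline{\partial}F_k$, gives $\|F_k\|_{C^p(\frac12\mathbb{D})}\le C_p\bigl(\|F_k\|_{C^0(\mathbb{D})}+\|\overline{\partial}F_k\|_{C^p(\mathbb{D})}\bigr)=O(k^m)$, with $C_p$ independent of $k$.
\end{itemize}
You do not even need the splitting $F_k=A_k+R_k$.

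\textbf{Two smaller corrections.}

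First, on $P$ one has $k\,\partial h_{\tilde x_0}=O(k)$, not $O(k^{1/2})$, because $\partial h_{\tilde x_0}(\tilde x_0)=2\partial\phi(\tilde x_0)$ is in general nonzero. The full factor $k$ per derivative comes from the weight, not from the rescaling, and the exponent is sharp. For $f_k=e^{k\psi(\tilde x,\overline{\tilde y})}$ one gets $(\partial_{\tilde x}f_k)e^{-k\phi(\tilde x)}=k\,\partial\phi(\tilde x)$ on the diagonal. So there is no slack in the power of $k$ to absorb bootstrap losses. The Leibniz rule gives $O(k^{m+p_1/2+p_2})$ with $p_1+p_2\le p$, whose worst case is exactly $k^{m+p}$.

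Second, drop Step 1. It is never used, and it goes the wrong way. By \Cref{lem: diastasis estimate}, $|e^{-k\psi(\tilde x,\overline{\tilde y})}|\ge e^{-k(\phi(\tilde x)+\phi(\tilde y))/2}e^{\delta k|\tilde x-\tilde y|^2/2}$, so $g_k=f_ke^{-k\psi}$ may grow, not decay, away from the diagonal.
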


Let $p$ be a nonnegative integer. Applying \Cref{prop: C^0 expansion} and \Cref{lem: derivatives} to the difference
\begin{equation*}
K_k - \mathcal{K}_k^{\av},
\end{equation*}
we obtain the following result.

\begin{prop} \label{prop: C^p expansion}
There is a neighborhood $U_0 \subseteq U$ of the origin such that
\begin{equation*}
K_k(x,y) = \mathcal{K}_k^{\av}(x,y) + O(k^{n-N+p-1})
\end{equation*}
on $\varphi(U_0) \times \varphi(U_0)$ with respect to the $C^p$ topology.
\end{prop}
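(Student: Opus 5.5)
The plan is to upgrade \Cref{prop: C^0 expansion} from the $C^0$ to the $C^p$ topology by applying \Cref{lem: derivatives} to $f_k := K_k - \mathcal{K}_k^{\av}$ on $U_0\times U_0$, with $U_0$ the $G$-invariant neighbourhood from \Cref{prop: C^0 expansion}. We then shrink to $\frac12 U_0$, which is again a $G$-invariant neighbourhood of the origin. Throughout we work in the chart $(U,G,\varphi)$, where sections of $\mathcal{E}(k)$ are $G$-equivariant $\mathbb{C}^r$-valued functions. The weighted norm in which the estimates are stated is then $|f|e^{-k(\phi(\tilde x)+\phi(\tilde y))/2}$, up to the bounded factor coming from $H$.

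The argument checks the three hypotheses of \Cref{lem: derivatives} and then applies it.

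\textbf{Size.} By \Cref{prop: C^0 expansion}, $f_k e^{-k(\phi(\tilde x)+\phi(\tilde y))/2} = O(k^{n-N-1})$. So we may take $m = n-N-1$.

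\textbf{Holomorphy of $K_k$.} $K_k(x,y)=\sum_i s_i(x)\otimes\overline{s_i}(y)$ is exactly holomorphic in the first variable and antiholomorphic in the second. Hence $\overline{\partial}_{\tilde x}K_k = 0$ and $\partial_{\tilde y}K_k=0$.

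\textbf{Almost-holomorphy of $\mathcal{K}_k^{\av}$.} This is the one point needing care, because of the orientation conventions: \Cref{prop: local reproducing kernel} is phrased for $\mathcal{K}_k(y,x)$, with the almost analytic variable in the first slot after conjugation. Here $\mathcal{K}_k^{\av}$ is a finite average of the functions $(\tilde y,\tilde x)\mapsto \overline{\rho^k(h^{-1})}\,\mathcal{K}_k(g\tilde y,h\tilde x)\,\rho^k(g^{-1})^t$. Since $G$ acts by biholomorphisms and the matrices $\rho^k$ are constant, $\overline{\partial}$ and $\partial$ commute with these substitutions up to composition with holomorphic maps. The weight is $G$-invariant because $\phi(g\tilde x)=\phi(\tilde x)$. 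Therefore the bounds $e^{-k(\phi(x)+\phi(y))/2}D\overline{\partial}_y\mathcal{K}_k = O(k^{-\infty})$ and $e^{-k(\phi(x)+\phi(y))/2}D\partial_x\mathcal{K}_k = O(k^{-\infty})$ from \Cref{prop: local reproducing kernel} transfer to each summand. It follows that $\overline{\partial}$ in the holomorphic slot and $\partial$ in the antiholomorphic slot of $\mathcal{K}_k^{\av}$ are $O(k^{-\infty})$ after weighting. Combining this with the previous point, $f_k$ satisfies the almost-holomorphy hypotheses of \Cref{lem: derivatives}.

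\textbf{Conclusion.} \Cref{lem: derivatives} now gives, for every differential operator $D$ of order $p$, that $(Df_k)e^{-k(\phi(\tilde x)+\phi(\tilde y))/2} = O(k^{n-N-1+p})$ on $\frac12U_0\times\frac12U_0$.

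It remains to interpret this as a $C^p$ estimate with respect to $H\otimes h^k$. Derivatives of the weight $e^{-k\phi/2}$ produce at most a factor $k$ each, and derivatives of $H$ are bounded. So, on the relatively compact set, the unweighted derivatives of the weighted kernel can be rewritten in terms of weighted derivatives of $f_k$ at the cost of the same total power $k^p$. This yields $O(k^{n-N+p-1})$ in $C^p$.

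Finally, because $G$-equivariance is preserved, the estimate descends from $\frac12U_0$ to $\varphi(\frac12U_0)\times\varphi(\frac12U_0)$. Renaming $\frac12U_0$ as $U_0$ gives the statement.
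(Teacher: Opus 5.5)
Your proposal is correct and follows the paper's argument: the paper simply applies \Cref{lem: derivatives} to $K_k - \mathcal{K}_k^{\av}$, using the $C^0$ bound from \Cref{prop: C^0 expansion} as the size estimate. Your write-up additionally supplies two points the paper leaves implicit: the verification of the (almost-)holomorphy hypotheses for the averaged kernel via $G$-invariance of $\phi$, and the Leibniz-rule conversion of the weighted derivative bounds into a $C^p$ estimate.
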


\section{Coefficients of the Local Reproducing Kernels} \label{coefficients}

In this section, we will analyze the coefficients in the expansion of the local reproducing kernels given in \Cref{prop: local reproducing kernel}. We will show that the coefficients can always be expressed as linear combination of certain matrix products. We will use this crucially in the next section where we prove \Cref{thm: main} .

Let $V$ be an $r$ dimensional complex vector space. Then, for any nonnegative integer $i \geq 0$, $\Sym^i V$ is an $r_i$ dimensional complex vector space, where $r_i = \binom{r+i-1}{r-1}$. We denote by $\mathfrak{s}^i : \End(V) \to \End(\Sym^i V)$ the Lie algebra homomorphism corresponding to $\Sym^i : \GL(V) \to \GL(\Sym^i V)$. Now, let $\{e_1, \dotsc, e_r\}$ be a basis for $V$. For any multi-index $\mu = (\mu_1, \dotsc, \mu_r)$ such that $\mu \geq 0$, we set
\begin{equation*} 
    e_\mu := \frac{{e_1}^{\mu_1} \dotsb {e_r}^{\mu_r}}{ \sqrt{\mu_1! \dotsb \mu_r!}} \in \Sym^{|\mu|}V.
\end{equation*}
For any nonnegative integer $i \geq 0$, we make the identification $\Sym^i V \cong \mathbb{C}^{r_i}$, by using the basis 
\begin{equation} \label{eq: symmetric basis}
    \left \{ e_\mu : \mu \in (\mathbb{Z}_{\geq 0})^r \text{ and } |\mu| = i  \right \}.
\end{equation}
Once a basis for $V$ has been chosen, we will use the above bases to view $\Sym^i$ and $\mathfrak{s}^i$ as maps from $M_{r \times r}(\mathbb{C})$ to $M_{r_i \times r_i}(\mathbb{C})$. It is a fact that, for any Hermitian matrix $H \in M_{r \times r}(\mathbb{C})$, $\Sym^i (H)$ is also a Hermitian matrix and that
\begin{equation*}
    \langle v_1 \dotsb v_i, w_1 \dotsb w_i \rangle_{\Sym^i (H)} = \sum_{\sigma \in \mathfrak{S}_i} \langle v_{1}, w_{\sigma(1)} \rangle_{H} \dotsb \langle v_{i}, w_{\sigma(i)} \rangle_{H} 
\end{equation*}
for all vectors $v_1$, $\dotsc$, $v_i$, $w_1$, $\dotsc$, $w_i$ in $\mathbb{C}^r$.

Let $U \subseteq \mathbb{C}^n$ be a connected neighborhood of the origin. Suppose that $H : U \to M_{r \times r}(\mathbb{C})$ is a Hermitian metric on $U \times \mathbb{C}^r$ and that the function $\phi : U \to \mathbb{R}$ defined by $\phi := -\log(\det(H))$ is smooth and $\omega := \frac{\sqrt{-1}}{2\pi} \partial \overline{\partial} \phi$ is a K\"ahler form on $U$. Set $H^i := \Sym^i (H)$. For any positive integers $i, k > 0$ and $s \in C^\infty(U,\mathbb{C}^{r_i})$ set,
\begin{equation*}
\|s\|_{i,k} = \int_U s(x)^t H^i(x) \overline{s(x)} e^{-k\phi(x)} \frac{\omega^n}{n!}(x).
\end{equation*}
As in \Cref{local reproducing kernels}, we define $\mathcal{H}_{i,k}(U)$ by
\begin{equation*}
\mathcal{H}_{i,k}(U) = \left \{ s \in H^0(U, \mathbb{C}^{r_i}) : \|s\|_{i,k} < \infty \right \}
\end{equation*}
and define an inner product $( \cdot, \cdot)_{i,k}$ on $\mathcal{H}_{i,k}(U)$ by 
\begin{equation*}
    (s,t)_{i,k} = \int_U s(x)^t H^i(x) \overline{t(x)} e^{-k\phi(x)} \frac{\omega^n}{n!}(x).
\end{equation*}

Let $N$ be a fixed nonnegative integer. By \Cref{prop: local reproducing kernel}, there exist smooth matrix-valued functions $\tilde{b}_{i,q}(x,z) : U \times U \to M_{r \times r}(\mathbb{C})$ such that $\mathcal{K}_{i,k} : U \times U \to M_{r_i \times r_i}(\mathbb{C})$ defined by
\begin{equation} \label{eq: local kernel}
\overline{\mathcal{K}_{k}(y,x)} = \left ( \tilde{b}_{i,0} (x,\overline{y}) k^n + \tilde{b}_{i,1}(x,\overline{y})  k^{n-1} + \dotsb + \tilde{b}_{i,N}(x,\overline{y}) k^{n-N} \right ) \overline{H}(x,\overline{y})^{-1} e^{k\psi(x,\overline{y})}
\end{equation}
is a local reproducing kernel modulo $O(k^{-N-1})$ for $\mathcal{H}_{i,k}(U)$.

We recall from \cite{BBS} the constructions of the functions $\tilde{b}_{i,q}$ in \eqref{eq: local kernel}. Define $\theta : U \times U \times U \to \mathbb{C}^{3n}$ by
\begin{equation*}
    \theta_j(x,y,z) =  \int_0^1\partial_j \psi(tx+(1-t)y,z) \,dt, 
\end{equation*}
where $\theta_j$ is the $j$-th coordinate of $\theta$ and $\partial_j \psi$ is the holomorphic derivative of $\psi(x,z)$ with respect to $x_j$. Because $\phi$ is strictly plurisubharmonic, the map
\begin{equation*}
    (x,y,z) \mapsto(x,y,\theta)
\end{equation*}
is a diffeomorphism near the origin in $\mathbb{C}^{3n}$. Define the function $\Delta_0$ by
\begin{equation*}
    \Delta_{0} (x,y,\theta) = \frac{\det (\partial_y\partial_z \psi(y,z))} {\det(\partial_z \theta(x,y,z))}
\end{equation*}
and set
\begin{equation*}
    \Delta_i (x,y,\theta) = \Delta_0(x,y,\theta)  \overline{H^i}(x,z)^{-1} \overline{H^i}(y,z).
\end{equation*}
Note that 
\begin{equation} \label{eq: base case}
\Delta_i(x,x,z(x,x,\theta)) = \det \partial_\theta (\theta) \Id = \Id.
\end{equation}

\begin{lem} \label{lem: recursive formula} \textnormal{(\cite{BBS}, Equation 2.13)}
    Let $U$ be a sufficiently small polydisk centered at the origin. Then, the coefficients $\tilde{b}_{i,q}(x,z)$ satisfy the recursive formula
    \begin{equation*}
        \sum_{l = 0}^m \frac{(D_\theta \cdot D_y)^l}{l!} \left ( \tilde{b}_{i,m-l}(x,z)  \Delta_i(x,y,\theta) \right)_{\mid y=x} = 0
    \end{equation*}
    for all $m > 0$.
\end{lem}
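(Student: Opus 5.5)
This is a citation of \cite{BBS}, Equation 2.13, so the plan is to re-derive the recursion in the vector-valued setting with the twisted amplitude $\Delta_i$. The Hermitian matrix $H$ of \cite{BBS} is replaced by $H^i=\Sym^i(H)$, and the weight is $\phi=-\log\det H$, which is strictly plurisubharmonic. Throughout, $U$ is shrunk to a polydisk on which three things hold: the almost analytic extensions $\psi$ and $\overline{H^i}$ exist, \Cref{lem: diastasis estimate} applies, and the change of variables $(x,y,z)\mapsto(x,y,\theta)$ is a diffeomorphism.

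\textbf{Step 1: reduce the reproducing property to a formal identity.} For $u\in\mathcal{H}_{i,k}(U)$ and $x\in\frac12 U$, write out
\[
(\chi u,\mathcal{K}_{i,k}(\cdot,x))_{i,k}.
\]
Substituting \eqref{eq: local kernel} gives an integral over $y$ of $\chi(y)$ times
\[
\sum_q k^{n-q}\,\tilde b_{i,q}(x,\overline y)\,\overline{H^i}(x,\overline y)^{-1}\,\overline{H^i}(y,\overline y)\,e^{k(\psi(x,\overline y)-\phi(y))}\,u(y)
\]
against $\omega^n/n!$. Write $\omega^n/n!$ as $c_n\det(\partial_y\partial_{\bar y}\psi)\,dy\wedge d\bar y$ on the diagonal $z=\bar y$.

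Next pass to the totally real picture. By almost analyticity, replacing $\overline y$ by an independent variable $z$ changes each integrand only by terms that vanish to infinite order on $z=\bar y$. Since Gaussian decay comes from \Cref{lem: diastasis estimate}, these terms contribute $O(e^{k\phi(x)/2}k^{-\infty})\|u\|_{i,k}$.

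Now write $\psi(x,z)-\psi(y,z)=(x-y)\cdot\theta(x,y,z)$ by the fundamental theorem of calculus, and change variables $z\mapsto\theta$. The Jacobian together with $\det\partial_y\partial_z\psi$ produces exactly $\Delta_0$, and the metric factors produce $\overline{H^i}(x,z)^{-1}\overline{H^i}(y,z)$. The integral therefore becomes a Fourier-type integral
\[
\int e^{k(x-y)\cdot\theta}\,\tilde b_i(x,z)\,\Delta_i(x,y,\theta)\,u(y)\,\chi\,d\theta\,dy,
\]
where $\tilde b_i=\sum_q k^{-q}\tilde b_{i,q}$.

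\textbf{Step 2: stationary phase.} Apply the standard identity for integrals $\int e^{k(x-y)\cdot\theta}a(y,\theta)\,u(y)$ with $u$ holomorphic (\cite{BBS}, Section 2). Expanding in powers of $k^{-1}$, it equals
\[
u(x)\sum_l k^{-l}\,\frac{(D_\theta\cdot D_y)^l}{l!}\,a\Big|_{y=x}
\]
up to $O(k^{-M})$. Holomorphy of $u$ is what lets one integrate by parts in $y$ with $u$ absorbed and no derivatives falling on it. Because of the normalisation constant and the factor $k^n$, the $l=0$, $q=0$ term is $u(x)\,\tilde b_{i,0}\,\Delta_i|_{y=x}$. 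By \eqref{eq: base case} this is $u(x)$ once $\tilde b_{i,0}=\Id$.

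\textbf{Step 3: match coefficients.} Collect the terms of order $k^{-m}$; they are exactly
\[
\sum_{l=0}^m \frac{(D_\theta\cdot D_y)^l}{l!}\big(\tilde b_{i,m-l}\Delta_i\big)_{\mid y=x}.
\]
The reproducing property modulo $O(k^{-N-1})$ therefore holds precisely when these vanish for $1\le m\le N$. Conversely, the $\tilde b_{i,q}$ from \Cref{prop: local reproducing kernel} are constructed in \cite{BBS} by solving this triangular system: isolate the $l=0$ term $\tilde b_{i,m}(x,z)\,\Delta_i(x,x,\theta)=\tilde b_{i,m}$ and express it through lower coefficients. Thus they satisfy the recursion.

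\textbf{Main obstacle.} The delicate step is the matrix-valued bookkeeping. The factor $\overline{H^i}(x,z)^{-1}\overline{H^i}(y,z)$ must be grouped into $\Delta_i$ rather than left separately, and it does not commute with $\tilde b_{i,q}$, so one has to check that it sits on the right-hand side of $\tilde b_{i,q}$ in the integrand. I would first verify this ordering carefully against the transpose/conjugate conventions of the pairing $(s,t)_{i,k}=\int s^tH^i\bar t$. Once the ordering is fixed, the remaining estimates are scalar and carry over verbatim from \cite{BBS}.
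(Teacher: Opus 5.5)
Your conclusion is right, and its real justification is the same as the paper's. The paper gives no argument beyond the citation of \cite{BBS}, where this triangular system is exactly how the $\tilde b_{i,q}$ of \Cref{prop: local reproducing kernel} are defined, so the lemma holds by construction; this is what the last two sentences of your Step 3 say.

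The problem is the derivation in Step 2 that is meant to justify the matrix-valued recursion. The identity
\[
c\,k^n\int e^{k(x-y)\cdot\theta}a(y,\theta)\,u(y)\,\chi\,d\theta\wedge dy
= u(x)\sum_l k^{-l}\frac{(D_\theta\cdot D_y)^l}{l!}a\Big|_{y=x}+O(k^{-M})
\]
is false: after setting $y=x$ the right-hand side still depends on the free variable $\theta$, and the left-hand side does not. For example, take $a=\theta_j$. Then $\theta_j e^{k(x-y)\cdot\theta}=-k^{-1}\partial_{y_j}e^{k(x-y)\cdot\theta}$. Integrating by parts in $y$ (using holomorphy of $u$ and Stokes for the holomorphic form $d\theta\wedge dy$ on the contour) gives $k^{-1}\partial_j u(x)$ up to negligible terms. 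That is not $u(x)$ times any quantity independent of $u$. So Step 3 has no well-defined $k^{-m}$ coefficients to collect. The claim that reproduction holds \emph{precisely} when the recursion holds is also unsupported, and it is not needed.

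What \cite{BBS} actually uses is a reduction at the level of amplitudes. The steps are:

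\begin{enumerate}
\item Taylor-expand $a(x,y,\theta)$ in $y$ about $x$.
\item Write $(y-x)^\alpha e^{k(x-y)\cdot\theta}=(-k^{-1}\partial_\theta)^\alpha e^{k(x-y)\cdot\theta}$ and integrate by parts in $\theta$ on the contour $\theta=\theta(x,y,\bar y)$.
\item Control the $\bar\partial$-errors from almost analyticity: they are $O(|x-y|^\infty)$ and are absorbed by the Gaussian bound of \Cref{lem: diastasis estimate}.
\end{enumerate}

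This shows that $a$ can be replaced, modulo $O(k^{-M})$, by the $y$-independent amplitude
\[
S(a)(x,\theta)=\sum_l \frac{k^{-l}}{l!}(D_\theta\cdot D_y)^l a\Big|_{y=x}.
\]
Separately, the constant amplitude $\Id$ reproduces $u(x)$ by the basic formula. One then \emph{imposes} $S(\tilde b_i\Delta_i)\equiv\Id$ identically in $(x,\theta)$, which is a sufficient condition for reproduction. Its $k^{-m}$-component is exactly the displayed recursion, holding for all $(x,\theta)$, equivalently all $(x,z)$. This is why $z$ stays free in the lemma.

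Your ordering check is correct, and it is what lets this go through for matrices. The amplitude is $\tilde b_{i,q}(x,z)\,\Delta_0\,\overline{H^i}(x,z)^{-1}\overline{H^i}(y,z)$ with $u(y)$ on the right. $S$ acts entrywise and never touches $u$.
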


Fix a positive $N \geq 1$ and a nonnegative integer $i \geq 0$. Fix a neighborhood $U$ of the origin such that there exist smooth matrix-valued functions $\tilde{b}_{i,q} : U \times U \to M_{r_i \times r_i}(\mathbb{C})$, for $q = 0, \dotsc, N$, that satisfy the properties in \Cref{prop: local reproducing kernel}. By replacing $U$ with a smaller neighborhood of the origin if necessary, we may assume that there exists a smooth function $\Phi : U \times U \to M_{r \times r}(\mathbb{C})$ such that
\begin{equation*}
H(y,z) = e^{-\Phi(y,z)}.    
\end{equation*}

We denote the space of all smooth $M_{r_i \times r_i}(\mathbb{C})$-valued functions on $U \times U$ by $C^\infty(U \times U, M_{r_i \times r_i}(\mathbb{C}))$. For each $m \geq 0$, we define $R_m$ as the subset of $C^\infty(U \times U, M_{r_i \times r_i}(\mathbb{C}))$ consisting of functions $F$ such that $F$ can be written as a linear combination of elements of the form
\begin{equation*}
    f(x,z) \mathfrak{s}^i(M_1(x,z)) \dotsb \mathfrak{s}^i(M_{m'}(x,z))
\end{equation*}
where $m' \leq m$, $M_l(x,z)$ is a smooth $M_{r \times r}(\mathbb{C})$-valued function on $U \times U$ for all $1 \leq l \leq m'$, and $f(x,z) \in C^\infty(U \times U)$. Note that 
\begin{equation*}
    R_{m_1} \cdot R_{m_2} \subseteq R_{m_1+m_2}    
\end{equation*}
for all nonnegative integers $m_1 \geq 0$ and $m_2 \geq 0$. 

\begin{prop} \label{prop: shape of coefficients}
    For each $q = 0, \dotsc, N$, $\tilde{b}_{i,q}(x,z) \in R_q$.
\end{prop}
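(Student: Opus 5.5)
We argue by strong induction on $q$, using the recursive formula of \Cref{lem: recursive formula}. The base case is $\tilde{b}_{i,0}=\Id\in R_0$ (the constant function $1$ times the empty product). For the inductive step we isolate the term $l=0$ in the recursion:
\begin{equation*}
\tilde{b}_{i,m}(x,z)\,\Delta_i(x,x,\theta)=-\sum_{l=1}^{m}\frac{(D_\theta\cdot D_y)^l}{l!}\left(\tilde{b}_{i,m-l}(x,z)\,\Delta_i(x,y,\theta)\right)_{\mid y=x}.
\end{equation*}
By \eqref{eq: base case}, $\Delta_i(x,x,\theta)=\Id$, so this expresses $\tilde{b}_{i,m}$ through lower coefficients. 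It therefore suffices to show that each summand on the right lies in $R_m$ once we return to the variables $(x,z)$. The change of variables $(x,y,z)\leftrightarrow(x,y,\theta)$ involves only $\psi$, which is scalar, so it preserves the scalar factor $f$ in the definition of $R_m$.

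The key structural input is the form of $\Delta_i$. First, $\overline{H^i}=\Sym^i(\overline{H})$, and we write $\overline H(y,z)=e^{-\overline{\Phi}(y,z)}$ (an almost analytic version of $\Phi$). Then
\begin{equation*}
\Delta_i(x,y,\theta)=\Delta_0\cdot \Sym^i\!\left(e^{\overline\Phi(x,z)}e^{-\overline\Phi(y,z)}\right)=\Delta_0\cdot\Sym^i(G(x,y,z)),
\end{equation*}
where $G:=\overline H(x,z)^{-1}\overline H(y,z)$ is an $r\times r$ matrix with $G|_{y=x}=\Id$.

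The main claim is that for any multi-index $\alpha$ in the $y$ and $\theta$ directions, $\partial^\alpha \Sym^i(G)$ can be written as $\Sym^i(G)$ times a sum of products of at most $|\alpha|$ factors $\mathfrak{s}^i(M)$, where each $M$ is a smooth $r\times r$ matrix function. This follows from the identity
\begin{equation*}
\partial\,\Sym^i(G)=\Sym^i(G)\,\mathfrak{s}^i(G^{-1}\partial G),
\end{equation*}
which holds because $\Sym^i$ is a group homomorphism and $\mathfrak{s}^i$ is its differential at the identity. We also need $\partial\,\mathfrak{s}^i(M)=\mathfrak{s}^i(\partial M)$, which holds because $\mathfrak{s}^i$ is linear. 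Iterating these two rules by Leibniz shows that each derivative adds at most one $\mathfrak{s}^i$ factor.

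Derivatives of the previous coefficient $\tilde b_{i,m-l}$ and of $\Delta_0$ behave the same way. By induction $\tilde b_{i,m-l}\in R_{m-l}$, and differentiating an element $f\,\mathfrak{s}^i(M_1)\cdots\mathfrak{s}^i(M_{m'})$ keeps it in $R_{m-l}$, since derivatives act by $\mathfrak{s}^i(\partial M_j)$ or $\partial f$. Evaluation at $y=x$ makes $\Sym^i(G)=\Id$. So $(D_\theta\cdot D_y)^l$ applied to $\tilde b_{i,m-l}\Delta_i$ gives terms in $R_{m-l}\cdot R_{2l}$.

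This crude count gives $R_{m+l}$, which is too large, and sharpening it is the main obstacle. The operator $(D_\theta\cdot D_y)^l$ has order $2l$ in total, but only the $l$ derivatives $D_y$ can hit $G$ nontrivially after setting $y=x$. This works because $G^{-1}\partial_\theta G$ vanishes at $y=x$: $G|_{y=x}\equiv\Id$ for all $z$, hence for all $\theta$ at fixed $x=y$. A naive count still gives up to $2l$ factors, so we need a refined bookkeeping. We filter by the order of vanishing along $y=x$: each $\mathfrak{s}^i$ factor created by a $\theta$-derivative carries a factor vanishing at $y=x$ (something like $(y-x)$ times a matrix). That factor must be killed by a $y$-derivative, which does not generate a new $\mathfrak{s}^i$ factor. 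With this filtration, at most $l$ new $\mathfrak{s}^i$ factors survive, giving membership in $R_{m-l}\cdot R_l\subseteq R_m$.

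If this bookkeeping proves delicate, a fallback is to write $G=\exp(\Xi)$ with $\Xi(x,y,z)$ vanishing on $y=x$, so that $\Sym^i(G)=\exp(\mathfrak{s}^i(\Xi))$. Then expand in powers of $\mathfrak{s}^i(\Xi)$, Taylor-expanding $\Xi$ in $y-x$: only terms with at most $l$ powers of $\mathfrak{s}^i(\Xi)$ survive $l$ $y$-derivatives at $y=x$.
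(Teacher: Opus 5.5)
Your proposal is correct and takes essentially the paper's route. Both arguments induct via the recursion of \Cref{lem: recursive formula}, use that $\Sym^i$ is a homomorphism so that derivatives of $\overline{H^i}(x,z)^{-1}\overline{H^i}(y,z)$ become products of $\mathfrak{s}^i$-factors, and observe that only the $l$ derivatives $D_y$ can produce factors surviving $y=x$, because this matrix is identically $\Id$ on $\{y=x\}$. The only difference is bookkeeping: the paper proves the count in $(y,z)$-variables, where pure $z$-derivatives vanish at $y=x$, and transfers it to $(y,\theta)$ by the chain rule, whereas you count directly in $(y,\theta)$ by order of vanishing along $y=x$ (or via $\Sym^i(e^{\Xi})=e^{\mathfrak{s}^i(\Xi)}$), and either version works.
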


\begin{proof}

We will show first that 
\begin{equation} \tag{\textasteriskcentered} \label{eq: calc for prop}
    {\partial_z}^\beta{\partial_y}^\alpha \left ( \overline{H^i}(x,z)^{-1}  \overline{H^i}(y,z)\right )_{\mid y = x} \in R_{|\alpha|}
\end{equation}
for any multi-indices $\alpha$ and $\beta$. Consider the case when $|\alpha| = 1$. Without loss of generality, we may assume that ${\partial_y}^\alpha = \partial_{y_1}$. Then,
\begin{equation*}
    {\partial_z}^\beta \partial_{y_1} \left ( \overline{H^i}(x,z)^{-1} \overline{H^i}(y,z) \right ) = {\partial_z}^\beta \left ( \overline{H^i}(x,z)^{-1}\overline{H^i}(y,z) \mathfrak{s}^i\left ( \overline{M(y,z)} \right ) \right ),
\end{equation*}
where 
\begin{equation*}
    M(y,z) = \sum_{n=1}^\infty \frac{1}{n!}\ad (\Phi(y,z))^{n-1}  (\partial_{y_1}\Phi(y,z)).
\end{equation*}
Since
\begin{equation*}
    (D_z)^{\beta'} \left ( \overline{H^i}(x,z)^{-1} \overline{H^i}(y,z) \right)_{\mid y = x} = 0
\end{equation*}
for any multi-index $\beta'$,
\begin{equation*}
    {\partial_z}^\beta \partial_{y_1} \left ( \overline{H^i}(x,z)^{-1} \overline{H^i}(y,z) \right )_{\mid y = x} =   \mathfrak{s}^i\left ({\partial_z}^\beta \left ( \overline{M(x,z)} \right ) \right ) \in R_1.
\end{equation*} 
The proof of \eqref{eq: calc for prop} when $|\alpha| > 1$ is similar; for convenience of the reader we omit the proof.

Next, we view $z = z(x,y,\theta)$ as a function of $x$, $y$, and $\theta$ and show that
\begin{equation} \tag{\textasteriskcentered \textasteriskcentered} \label{eq: calc for prop 2}
{\partial_\theta}^\beta {\partial_y}^\alpha \left ( \overline{H^i}(x,z)^{-1}  \overline{H^i}(y,z)\right )_{\mid y = x} \in R_{|\alpha|}
\end{equation}
for any multi-indices $\alpha$ and $\beta$. By the Chain Rule, 
\begin{equation*}
\partial_{\theta_j} = \sum_{k=1}^n\deriv{z_k}{\theta_j} \partial_{z_k}
\end{equation*}
and
\begin{equation*}
    \partial_{y_j} = \partial_{y_j} + \sum_{k=1}^n\deriv{z_k}{y_j} \partial_{z_k}.
\end{equation*}
Then, \eqref{eq: calc for prop 2} follows from \eqref{eq: calc for prop}.

Finally, we prove the proposition by induction. By \Cref{lem: recursive formula}, 
\begin{equation*}
\tilde{b}_{i,0}(x,z) = \Id \in R_0.
\end{equation*}
Now, let $q_0$ be a positive integer and suppose that $\tilde{b}_{i,q} \in R_q$ for all $q \leq q_0 - 1$. Let $1 \leq l \leq q_0$ be given. By \eqref{eq: base case} and \Cref{lem: recursive formula}, it suffices to show that  
\begin{equation*}
    \frac{(D_\theta \cdot D_y)^l}{l!} \left ( \tilde{b}_{i,q_0-l} \Delta_i \right ) \in R_{q_0}
\end{equation*}
for all $l = 1, \dotsc, q_0$. Note that any derivative in $y$ or $\theta$ of an element of the form
\begin{equation*}
    f(x,z) \mathfrak{s}^i(M_1(x,z)) \dotsb \mathfrak{s}^i(M_p(x,z))
\end{equation*}
just replaces $f$ or one of $M_1$, $\dotsc$, $M_p$ with its derivative. So, if $\tilde{b}_{i,q} \in R_q$, then any derivative in $y$ or $\theta$ of $\tilde{b}_{i,q}$ is also an element of $R_q$. So, it will be enough to show that
\begin{equation} \tag{\textasteriskcentered \textasteriskcentered \textasteriskcentered} \label{eq: calc for prop 3}
{\partial_\theta}^\beta{\partial_y}^\alpha \Delta_{i}(x,y,\theta)_{\mid y = x} \in R_{|\alpha|}
\end{equation}
for any multi-indices $\alpha$ and $\beta$. We know that
\begin{equation*}
    \Delta_{i}(x,y,\theta) = \Delta_0(x,y,\theta) \overline{H^i}(x,z)^{-1} \overline{H^i}(y,z)
\end{equation*}
where $\Delta_0$ is a scalar function. As a result, \eqref{eq: calc for prop 3} follows from \eqref{eq: calc for prop 2}.

\end{proof}

From now on, we will restrict our attention to the case when the rank of $E$ is two. Let $V$ be a complex vector space of dimension $2$ and let $\{e_1,e_2\}$ be a basis for $V$. Let $i \geq 0$ be a nonnegative integer. Let $(A_{\mu\nu})_{|\mu|=|\nu| = i}$ denote the coordinates on $\GL(\Sym^i V)$ induced from the basis \eqref{eq: symmetric basis} for $\Sym^i V$. So, for any $A \in \GL(\Sym^i V)$,
$$
A(e_\nu) = \sum_{|\mu| = i} A_{\mu\nu}(A) e_\mu .
$$
Then, a direct computation of the derivative $\mathfrak{s}^i = d \Sym^i \mid_{\Id} : \End(V) \to \End(\Sym^i V)$ shows that
\begin{equation} \label{eq: s^i calculation}
\begin{split}
    \mathfrak{s}^i \left ( \begin{pmatrix} A_{11} & A_{12} \\ A_{21} & A_{22}\end{pmatrix}\right )  = &  \sum_{|\mu| = i} (\mu_1A_{11} + \mu_2 A_{22}) \tanv{A_{\mu\mu}} \\
    & + \sum_{|\mu| = i, \hspace{2pt} \mu_1 \geq 1} \sqrt{\mu_1(\mu_2+1)} A_{12}  \tanv{A_{(\mu_1,\mu_2)(\mu_1-1,\mu_2+1)}} \\
    & + \sum_{|\mu| = i, \hspace{2pt} \mu_2 \geq 1} \sqrt{(\mu_1+1)\mu_2} A_{21} \tanv{A_{(\mu_1 ,\mu_2)(\mu_1+1,\mu_2-1)}}.
\end{split} 
\end{equation}

\begin{prop} \label{prop: diff op}
Let $p \geq 1$ be a positive integer. For each $j = 1, \dotsc, p$, let $M_j$ be a two-by-two matrix. We write
\begin{equation*}
    M_j = 
    \begin{pmatrix}
        M_{j,11} & M_{j,12} \\
        M_{j,21} & M_{j,22} 
    \end{pmatrix}
\end{equation*}
for each $j = 1, \dotsc, p$. For any positive integer $i \geq 1$, denote by $A^i = (A^i_{\mu\nu})_{|\mu|=|\nu| = i}$ the matrix product
\begin{equation*}
    A^i = \mathfrak{s}^i \left ( M_1 \right ) \dotsb \mathfrak{s}^i \left ( M_p \right ).
\end{equation*}
Then, there exists a linear differential operator
\begin{equation*}
    L_0\left (T_1,T_2, \tanv{T_1}, \tanv{T_2}, M_{1,11}, \dotsb, M_{p,22} \right )
\end{equation*}
of order $p$ such that, for all $\mu = (\mu_1,\mu_2) \in (\mathbb{Z}_{\geq 0})^2$,
\begin{equation*}
   A^{\mu_1+\mu_2}_{\mu \mu} {T_1}^{\mu_1}{T_2}^{\mu_2} = L_0 \left ({T_1}^{\mu_1}{T_2}^{\mu_2} \right ).
\end{equation*}
Additionally, there exist linear differential operators
\begin{gather*}
    L_1\left (T_1,T_2, \tanv{T_1}, \tanv{T_2}, M_{1,11}, \dotsb, M_{p,22} \right ) \hspace{5pt} \text{ and } \hspace{5pt} L_{-1}\left (T_1,T_2, \tanv{T_1}, \tanv{T_2}, M_{1,11}, \dotsb, M_{p,22} \right )
\end{gather*}
of order $p+1$ such that
\begin{equation*}
    \sqrt{\mu_1(\mu_2 + 1)}A^{\mu_1 + \mu_2}_{(\mu_1,\mu_2)(\mu_1-1,\mu_2+1)} {T_1}^{\mu_1-1} T_2^{\mu_2} = L_1 \left ( {T_1}^{\mu_1} T_2^{\mu_2} \right ),
\end{equation*}
whenever $\mu_1 > 0$ and
\begin{equation*}
    \sqrt{(\mu_1+1)\mu_2 }A^{\mu_1 + \mu_2}_{(\mu_1,\mu_2)(\mu_1+1,\mu_2-1)} {T_1}^{\mu_1} T_2^{\mu_2-1} = L_{-1} \left ( {T_1}^{\mu_1} T_2^{\mu_2} \right ),
\end{equation*}
whenever $\mu_2 > 0$. The coefficients of $L_0$, $L_1$, and $L_{-1}$ are polynomials in $T_1$, $\frac{1}{T_1}$, $T_2$, $\frac{1}{T_2}$, $M_{1,11}$, $\dotsc$, $M_{p,22}$.
\end{prop}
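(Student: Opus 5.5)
We will realize each $\mathfrak{s}^i(M_j)$ as a differential operator on polynomials in $T_1,T_2$. The matrix product $A^i$ then becomes a composition of operators, and its entries can be read off as coefficients of monomials.

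Identify $\Sym^i V$ with homogeneous polynomials of degree $i$ in $T_1,T_2$ by sending $e_\mu \mapsto {T_1}^{\mu_1}{T_2}^{\mu_2}/\sqrt{\mu_1!\mu_2!}$. Under this identification the derivative $\mathfrak{s}^i(M)$ of $\Sym^i$ acts on the monomials as the first-order operator
\begin{equation*}
D_M = M_{11} T_1 \tanv{T_1} + M_{22} T_2 \tanv{T_2} + M_{21} T_2 \tanv{T_1} + M_{12} T_1 \tanv{T_2}.
\end{equation*}
We will check which off-diagonal term goes where against \eqref{eq: s^i calculation}; the roles of $M_{12}$ and $M_{21}$ may need to be swapped, or transposed, depending on row/column conventions. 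This operator is independent of $i$, since it is the vector field generating the linear substitution $T\mapsto e^{tM}T$.

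\textbf{Step 1.} Since $\mathfrak{s}^i$ is a Lie algebra homomorphism, $\mathfrak{s}^i(M_1)\dotsb \mathfrak{s}^i(M_p)$ corresponds to the composition $D := D_{M_1}\circ\dotsb\circ D_{M_p}$ (or its reverse, depending on convention). This $D$ is a differential operator of order $p$ with polynomial coefficients, and it preserves degree. In the monomial basis with normalizations $c_\mu = \sqrt{\mu_1!\mu_2!}$ we get
\begin{equation*}
D\left({T_1}^{\nu_1}{T_2}^{\nu_2}\right) = \sum_{|\mu|=i} \frac{c_\nu}{c_\mu} A^i_{\mu\nu}\, {T_1}^{\mu_1}{T_2}^{\mu_2}
\end{equation*}
for $|\nu|=i$.

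\textbf{Step 2: the diagonal entry.} To isolate the coefficient of ${T_1}^{\mu_1}{T_2}^{\mu_2}$ in $D({T_1}^{\mu_1}{T_2}^{\mu_2})$, we use the torus grading. Each term of $D$ is a monomial operator ${T_1}^{a}{T_2}^{b}\partial_1^{c}\partial_2^{d}$ shifting the bidegree by $(a-c,b-d)$. Set $L_0$ to be the sum of the terms with $a=c$ and $b=d$. Such terms act on ${T_1}^{\mu_1}{T_2}^{\mu_2}$ as a scalar, namely a falling-factorial polynomial in $\mu$, times the monomial itself. Hence $L_0({T_1}^{\mu_1}{T_2}^{\mu_2}) = A^{i}_{\mu\mu}{T_1}^{\mu_1}{T_2}^{\mu_2}$, and $L_0$ has order at most $p$ with polynomial coefficients in the $M_{j,ab}$.

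\textbf{Step 3: the off-diagonal entries.} For $L_1$ we want $A^i_{(\mu_1,\mu_2)(\mu_1-1,\mu_2+1)}$. Take $P$ to be the component of $D$ with bidegree shift $(1,-1)$. Then
\begin{equation*}
P\left({T_1}^{\mu_1-1}{T_2}^{\mu_2+1}\right) = \frac{c_{(\mu_1-1,\mu_2+1)}}{c_\mu}\, A\, {T_1}^{\mu_1}{T_2}^{\mu_2},
\end{equation*}
where $A$ is the required entry. Here $c_{(\mu_1-1,\mu_2+1)}/c_\mu = \sqrt{(\mu_2+1)/\mu_1}$. We must convert the input ${T_1}^{\mu_1}{T_2}^{\mu_2}$ into $\mu_1 {T_1}^{\mu_1-1}{T_2}^{\mu_2+1}$, which is done by applying $T_2\tanv{T_1}$. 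So set
\begin{equation*}
L_1 = (\mu_2+1)^{-1}\text{-free version of } \; {T_1}^{-1} P\, T_2 \tanv{T_1}.
\end{equation*}
More precisely, ${T_1}^{-1}P\,T_2\partial_{T_1}$ applied to ${T_1}^{\mu_1}{T_2}^{\mu_2}$ gives $\mu_1\sqrt{(\mu_2+1)/\mu_1}\,A\,{T_1}^{\mu_1-1}{T_2}^{\mu_2}$, which equals $\sqrt{\mu_1(\mu_2+1)}\,A\,{T_1}^{\mu_1-1}{T_2}^{\mu_2}$. This is exactly the required left-hand side. The operator has order $p+1$, and its coefficients involve $1/T_1$, which explains the allowed Laurent coefficients. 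The operator $L_{-1}$ is built symmetrically as ${T_2}^{-1}P'\,T_1\tanv{T_2}$, with $P'$ the component of bidegree shift $(-1,1)$.

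The main obstacle is bookkeeping rather than substance. We need to nail the sign and ordering conventions: whether $A_{\mu\nu}$ is row $\mu$ and column $\nu$, whether $\mathfrak{s}^i$ matches $D_M$ or $D_{M^t}$, and whether the product corresponds to $D_{M_1}\circ\dotsb\circ D_{M_p}$ or to the reverse order. We also need to check that the square-root normalizations cancel exactly as above. We will verify all of this against \eqref{eq: s^i calculation} in the case $p=1$, and the general case then follows by composition.
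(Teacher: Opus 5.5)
Your proof is correct, and it takes a genuinely different route from the paper. The conventions you left to check work out. With $A(e_\nu)=\sum_\mu A_{\mu\nu}e_\mu$, your $D_M$ reproduces \eqref{eq: s^i calculation} exactly as written, with no swap of $M_{12}$ and $M_{21}$. The product $A^i$ corresponds to $D_{M_1}\circ\dotsb\circ D_{M_p}$ in that order; this is just composition of linear maps and has nothing to do with $\mathfrak{s}^i$ being a Lie algebra homomorphism. Your normalization ratio makes $L_1=T_1^{-1}\circ P\circ T_2\partial_{T_1}$ (the formula after ``More precisely''; drop the garbled phrase before it) produce exactly $\sqrt{\mu_1(\mu_2+1)}\,A^i_{\mu,(\mu_1-1,\mu_2+1)}T_1^{\mu_1-1}T_2^{\mu_2}$. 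Two small things to say explicitly: $D$ is put in normal-ordered form (coefficients to the left) before taking torus-weight components, and order at most $p$ (resp.\ $p+1$) is all that \Cref{thm: main local} actually uses.

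The paper instead works entry by entry from the tridiagonal matrix \eqref{eq: s^i calculation}:
\begin{itemize}
\item it expands $A^i_{\mu\nu}$ as a sum over lattice paths;
\item it pairs up-steps with down-steps so the square roots combine into polynomials $(\mu_1-a)(\mu_2+a+1)$;
\item it realizes each path's product by composing Euler-type operators such as $T_1^{a+1}\partial_{T_1}T_1^{-a}$;
\item it handles $\ell=\pm1$ by splitting a path at its last visit to level $0$;
\item it must separately observe that paths leaving the admissible index range contribute zero.
\end{itemize}
Your approach replaces all of this with one $i$-independent operator on $\mathbb{C}[T_1,T_2]$. Independence of $L_0$ and $L_{\pm1}$ from $\mu$ and $i$ is then automatic, and extracting a matrix entry is just taking a torus-weight component. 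It also extends verbatim to entries farther from the diagonal and to higher rank, whereas the paper's path-pairing is tied to paths on a line. In exchange, the paper's construction gives explicit per-path formulas read directly off the matrix entries.
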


\begin{proof}

Let $i \geq 1$ be a positive integer. For each $j = 1, \dotsc, p$, we will write 
\begin{equation*}
\mathfrak{s}^i(M_j) = \left ( M^i_{j, \mu \nu} \right )_{|\mu| = |\nu| = i}
\end{equation*}
for the entries of the matrix $\mathfrak{s}^i(M_j)$. Then,
\begin{equation*}
    A^i_{\mu\nu} = \sum_{|\mu^1| = \dotsb = |\mu^{p-1}| = i} M^i_{1,\mu\mu^1} M^i_{2,\mu^1\mu^2} \dotsb M^i_{p,\mu^{p-1}\nu}.
\end{equation*}
Denote by $\Gamma(p,\mu,\nu)$ the set of all sequences $\{\mu^j\}_{j=0}^p$ of multi-indices in $\left ( \mathbb{Z}_{\geq 0} \right )^2$ of length $p+1$ such that $|\mu^j| = i$ for all $j = 0, \dotsc, p$, $\mu^0 = \mu$, $\mu^p = \nu$, and $|\mu^j - \mu^{j-1}| \leq 2$ for all $j = 1, \dotsc, p$. We infer from \eqref{eq: s^i calculation} that
\begin{equation} \tag{\textasteriskcentered} \label{eq: product formula}
    A^i_{\mu\nu} = \sum_{\{\mu^j\}_{j=0}^p \in \Gamma(p,\mu,\nu)} M^i_{1,\mu^0\mu^1} M^i_{2,\mu^1\mu^2} \dotsb M^i_{p,\mu^{p-1}\mu^p}.
\end{equation}

Fix $\mu = (\mu_1,\mu_2) \in \left ( \mathbb{Z}_{\geq 0} \right )^2$ with $|\mu| = i$. Define $F : \{ (\mu_1',\mu_2') \in \mathbb{Z} \times \mathbb{Z} : \mu_1' + \mu_2' = i\} \to \mathbb{Z}$ by
\begin{equation*}
    F(\mu_1',\mu_2') = \mu_2' - \mu_2.
\end{equation*}
Denote by $\Gamma'(p,\ell,\ell')$ the set of all integer sequences $\{\ell_j\}_{j=0}^p$ of length $p+1$ such that $\ell_0 = \ell$, $\ell_p = \ell'$, and $|\ell_j - \ell_{j-1}| \leq 1$ for all $j = 1, \dotsc, p$. Then, $F$ induces a bijection between $\Gamma(p,\mu,(\mu_1-\ell, \mu_2 + \ell))$ and $\Gamma'(p,0,\ell)$. Define $P : M_{2 \times 2} (\mathbb{C}) \times \{(a,b) \in \mathbb{Z}^2 : |a-b| \leq 1 \} \to \mathbb{C}$ by
\begin{equation*}
    P \left (\begin{pmatrix} M_{11} & M_{12} \\ M_{21} & M_{22} \\ \end{pmatrix}, a,b \right) =  
    \begin{cases}
        (\mu_1 - a)  M_{11} + (\mu_2 + a) M_{22} & \text{ if  $a = b$} \\
        \sqrt{ \max\{(\mu_1 - a)(\mu_2 + a +1),0\} } M_{12}  & \text{ if $a+1 = b$, and} \\
        \sqrt{ \max\{(\mu_1 - a + 1) (\mu_2 + a),0\} } M_{21} & \text{ if $a - 1 = b$.} \\
    \end{cases}
\end{equation*}
Then, \eqref{eq: product formula} can be rewritten as
\begin{equation} \tag{\textasteriskcentered \textasteriskcentered} \label{eq: product formula 1}
    A^i_{(\mu_1,\mu_2) (\mu_1 - \ell, \mu_2 + \ell)} = \sum_{\{\ell_j\}_{j=0}^p \in \Gamma'(p,0,\ell)} \prod_{j = 1}^{p} P(M_j, \ell_{j-1},\ell_{j}).
\end{equation}

We will first consider the case when $\ell = 0$. Let $\gamma = \{\ell_j\}_{j=0}^p \in \Gamma' (p,0,0)$ be given. We observe that, whenever $P(M_j,a,a+1)$ for some integer $a \in \mathbb{Z}$ appears in the product $\prod_{j = 1}^{p} P(M_j, \ell_{j-1},\ell_{j})$, $P(M_{j'},a+1,a)$ must also appear in the product, because the sequence $\{\ell_j\}_{j=0}^p$ starts and ends at $0$. Let $p'$ denote the number of integers $j$ such that $\ell_j = \ell_{j+1}$. After rearranging the terms in the product and relabeling the sequence $\{M_j\}_{j=1}^p$, we can choose a sequence of integers $\{a_j\}_{j=1}^{(p+p')/2}$ so that we can rewrite the product $\prod_{j = 1}^{p} P(M_j, \ell_{j-1},\ell_{j})$ as
\begin{equation*}
    \left ( \prod_{j=1}^{p'} (\mu_1 - a_j)M_{j,11} + (\mu_2 + a_j)M_{j,22}\right ) \left ( \prod_{j = 1}^{(p-p')/2}  \left ( \mu_1 - a_{p'+j} \right )\left ( \mu_2 + a_{p'+j} + 1\right ) M_{p'+2j-1,12}M_{p'+2j,21}  \right ). 
\end{equation*}
Define a differential operator $L_\gamma\left (T_1,T_2, \tanv{T_1}, \tanv{T_2}, M_{1,11}, \dotsb, M_{p,22} \right )$ by
\begin{equation*}
    L_\gamma = L_{\gamma,1} \circ \dotsb \circ L_{\gamma, (p + p')/2}
\end{equation*}
where
\begin{equation*}
    L_{\gamma,j} (f)=
    \begin{cases}
        M_{j,11}{T_1}^{a_j + 1}\tanv{T_1}\left ( {T_1}^{-a_j}  f \right ) + M_{j,22}{T_2}^{-a_j + 1}\tanv{T_2}\left ( {T_2}^{a_j}  f \right ) & \text{ if $1 \leq j \leq p'$, and } \\
        M_{2j-p'-1,12}M_{2j-p',21}{T_1}^{a_j + 1}{T_2}^{-a_j} \tanv{T_1}\tanv{T_2} \left ( {T_1}^{-a_j}{T_2}^{a_j+1} f \right ) & \text{ if $p'+1 \leq j \leq (p+p')/2$.}
    \end{cases}
\end{equation*}
Note that $L_\gamma$ does not depend on $\mu^0$ and that $L_\gamma({T_1}^{\mu_1}{T_2}^{\mu_2}) = 0$ whenever $a_j \geq \mu_1$ or $a_j \leq -\mu_2-1$ for some $j$. Setting 
\begin{equation*}
L_0 := \sum_{\gamma \in \Gamma'(p,0,0)} L_\gamma   
\end{equation*}
and using \eqref{eq: product formula 1}, we see that
\begin{equation*} 
    A^i_{\mu \mu} {T_1}^{\mu_1}{T_2}^{\mu_2} = L_0 \left ({T_1}^{\mu_1}{T_2}^{\mu_2} \right ).
\end{equation*}
By construction, $L_0$ is a differential operator of order $p$, $L_0$ does not depend on $\mu$ or $i$, and the coefficients of $L_0$ are polynomials in $T_1$, $\frac{1}{T_1}$, $T_2$, $\frac{1}{T_2}$, $M_{1,11}$, $\dotsc$, $M_{p,22}$. 

Next, we will consider the case when $\ell = 1$. Let $\gamma = \{\ell_j\}_{j=0}^p \in \Gamma'(p,0,1)$ be given. Let $j_0$ be the largest number such that $\ell_{j_0} = 0$. Then, it is necessary that $\ell_{j_0 + 1} = 1$. So $\gamma' = \{\ell_j\}_{j=0}^{j_0} \in \Gamma'(j_0,0,0)$ and $\gamma'' = \{\ell_j\}_{j=j_0+1}^p \in \Gamma'(p-j_0-1,1,1)$. By what we have already shown, we can rearrange the sequence $\{M_j\}_{j=1}^p$ to find differential operators $L_{\gamma'}$ and $L_{\gamma''}$ such that
\begin{equation*}
\begin{split}
    & \prod_{j = 1}^{p} P(M_j, \ell_{j-1},\ell_{j}) \\
    = \hspace{2pt} & \left ( {T_1}^{-\mu_1}{T_2}^{-\mu_2} L_{\gamma'} \left ({T_1}^{\mu_1}{T_2}^{\mu_2} \right ) \right )\sqrt{\mu_1(\mu_2+1)}M_{j_0+1,12} \left ( {T_1}^{-\mu_1+1}{T_2}^{-\mu_2-1} L_{\gamma''} \left ({T_1}^{\mu_1-1}{T_2}^{\mu_2+1} \right )\right ).
\end{split}
\end{equation*}
Define a differential operator $L_\gamma\left (T_1,T_2, \tanv{T_1}, \tanv{T_2}, M_{1,11}, \dotsb, M_{p,22} \right )$ by
\begin{equation*}
    L_\gamma(f) =  M_{j_0+1,12}  \tanv{T_2} \bigg ( L_{\gamma''}  \bigg ( T_2\tanv{T_1} \bigg ( L_{\gamma'}\bigg ( f \bigg ) \bigg ) \bigg ) \bigg ).
\end{equation*}
Set
\begin{equation*}
    L_1 := \sum_{\gamma \in \Gamma'(p,0,1)} L_\gamma.
\end{equation*}
Then,
\begin{equation*}
    \sqrt{\mu_1(\mu_2 + 1)}A^i_{(\mu_1,\mu_2)(\mu_1-1,\mu_2+1)} {T_1}^{\mu_1-1} T_2^{\mu_2} = L_1 \left ( {T_1}^{\mu_1} T_2^{\mu_2} \right ).
\end{equation*}
As before, $L_1$ is a differential operator of order $p+1$, $L_1$ does not depend on $\mu$ or $i$, and the coefficients of $L_1$ are polynomials in $T_1$, $\frac{1}{T_1}$, $T_2$, $\frac{1}{T_2}$, $M_{1,11}$, $\dotsc$, $M_{p,22}$.

Finally, the case when $\ell = -1$ is analogous to the case when $\ell = 1$.

\end{proof}

\section{Local Expansion} \label{expansion}

In this section, we will prove \Cref{thm: main}. Let $\mathcal{X}$ be a compact complex orbifold with abelian singularities equipped with a K\"ahler form $\omega$. Suppose that, for each $x \in X$, the stabilizer group $G_x$ is abelian and has rank at most two. Let $\mathcal{E}$ be an orbi-ample vector bundle of rank $2$ over $\mathcal{X}$ and set $\mathcal{L} := \det \mathcal{E}$. Let $H$ be a Hermitian metric on $\mathcal{E}$ such that the curvature form of $h := \det H$ is $-2\pi\sqrt{-1}\omega$, and fix $x_0 \in X$. Using \Cref{lem: orbifold chart} and \Cref{lem: orbifold vector bundle trivialization}, we fix an orbifold chart $(U,G,\varphi)$ such that $\varphi(0) = x_0$, $G = G_{x_0}$ acts linearly on $U$,  $E := E_U$ is a trivial vector bundle, and there exists a representation $\rho$ of $G$ such that 
\begin{equation*}
(g^{-1})^* (\tilde{x},v) = (g \cdot \tilde{x}, \rho(g) v)
\end{equation*}
for any $(\tilde{x},v) \in E$.

By our assumption $G = \mathbb{Z}/a\mathbb{Z} \times \mathbb{Z}/b\mathbb{Z}$ where $a, b \geq 1$, and that $G$ acts linearly on $U$. Since $G$ is abelian, we can choose a different trivialization of $E$ if necessary so that
\begin{equation*}
    \rho (g_1,g_2) = 
    \begin{pmatrix}
        \zeta_a(g_1) \zeta_b(g_2) & 0 \\
        0 & \zeta_a'(g_1) \zeta_b'(g_2)
    \end{pmatrix}
\end{equation*}
for any $(g_1,g_2) \in G$, where $\zeta_a(g_1)$ and $\zeta_a'(g_1)$ are $a$-th roots of unity and $\zeta_b(g_2)$ and $\zeta_b'(g_2)$ are $b$-th roots of unity. We set $\phi := -\log(\det H)$ so that $h = e^{-\phi}$. Additionally, for any nonnegative integer $i \geq 0$ and positive integer $k \geq 1$, we set $E^{i,k} = \Sym^i E \otimes L^k$ and $\rho^{i,k} := \Sym^i \rho \otimes (\det \rho)^{\otimes k}$.

Now, let $\{e_1,e_2\}$ be a basis for $V$. Note that $\End(\Sym^i V \otimes \det(V)^{\otimes k}) \cong \End(\Sym^i V)$. With respect to the basis \eqref{eq: symmetric basis}, we can view $\tau_{i,k}$ as a map from $M_{(i+1) \times (i+1)}(\mathbb{C})$ to $M_{2 \times 2}(\mathbb{C})$. Then, \eqref{eq: dual} and \eqref{eq: s^i calculation} imply that
\begin{equation} \label{eq: tauik calculation}
\begin{split}
    & \tau_{i,k}( (A_{\mu\nu})_{|\mu|=|\nu| = i}) \\
    & = 
    \begin{pmatrix}
        k \sum_{|\mu|=i} A_{\mu\mu} + \sum_{|\mu| = i} \mu_1 A_{\mu\mu} & \sum_{|\mu|=i, \hspace{2pt} \mu_2 \geq 1} \sqrt{(\mu_1+1)\mu_2} A_{(\mu_1+1,\mu_2-1)\mu} \\
        \sum_{|\mu|=i, \hspace{2pt} \mu_1 \geq 1} \sqrt{\mu_1(\mu_2+1)} A_{(\mu_1-1,\mu_2+1)\mu} & k \sum_{|\mu|=i} A_{\mu\mu} + \sum_{|\mu| = i} \mu_2 A_{\mu\mu}
    \end{pmatrix} \\
\end{split}
\end{equation}

The following lemma is a reformulation of the claim inside the proof of Theorem 3.3 in \cite{RT}. We provide the proof here to fix a small mistake found in \cite{RT}.

\begin{lem} \label{lem: claim}
    Let $(U,G,\varphi) \in \mathcal{U}$ be an orbifold chart such that $U$ is a neighborhood of the origin, $G$ acts linearly on $U$, and \eqref{eq: diastasis estimate} holds on $U$. Let $g,h \in G$ with $g \neq h$. Define $\eta : U \to \mathbb{C}$ by
    \begin{equation*}
        \eta(\tilde{x}) = e^{\psi(h \cdot \tilde{x}, \overline{g \cdot \tilde{x}}) - \phi(g \cdot \tilde{x})}.
    \end{equation*}
    Then, for any nonnegative integer $s \geq 0$ and any $x \in U$,
    \begin{equation*}
        \|h \cdot \tilde{x} - g \cdot \tilde{x}\|^{2s} \eta(\tilde{x})^k = O(k^{-s}).
    \end{equation*}
\end{lem}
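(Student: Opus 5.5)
The plan is to reduce the statement to the diastasis estimate of \Cref{lem: diastasis estimate} and then use an elementary one-variable bound. Set $a = g\cdot\tilde{x}$ and $b = h\cdot\tilde{x}$; both lie in $U$ because $G$ acts on $U$. The real part of the exponent of $\eta$ is what matters:
\begin{equation*}
|\eta(\tilde{x})| = e^{\Real\psi(b,\overline{a}) - \phi(a)} .
\end{equation*}
By the symmetry $\psi(x,\overline{y}) = \overline{\psi(y,\overline{x})}$ of the almost analytic extension, we have $\Real\psi(b,\overline{a}) = \tfrac12(\psi(b,\overline{a}) + \psi(a,\overline{b}))$. The second step is to use $G$-invariance of $\phi$, which holds because $h=\det H$ is an orbifold metric and the trivialization transforms by the unitary characters $\det\rho$. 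This gives $\phi(a) = \phi(b) = \phi(\tilde{x})$, so we may write $\phi(a) = \tfrac12(\phi(a)+\phi(b))$.

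Combining these,
\begin{equation*}
2\left(\Real\psi(b,\overline{a}) - \phi(a)\right) = -\phi(a) + \psi(b,\overline{a}) - \phi(b) + \psi(a,\overline{b}) \le -\delta |a-b|^2
\end{equation*}
by \eqref{eq: diastasis estimate}. Hence
\begin{equation*}
|\eta(\tilde{x})|^k \le e^{-k\delta|a-b|^2/2}.
\end{equation*}
This is where the care lies, and presumably where \cite{RT} slipped. One must take the real part, and one must use the invariance of $\phi$ to symmetrize $\phi(g\cdot\tilde{x})$. Without that, one is left with $\phi(a)$ alone rather than the average needed to match the diastasis expression. I would double-check here that $\phi$ is genuinely $G$-invariant in the chosen trivialization, since $|\det\rho(g)|=1$.

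The last step is the elementary bound $\sup_{t\ge 0} t^{s} e^{-k\delta t/2} = (2s/(e\delta))^{s} k^{-s}$, applied with $t=|a-b|^2$. It gives
\begin{equation*}
\|h\cdot\tilde{x} - g\cdot\tilde{x}\|^{2s}|\eta(\tilde{x})|^k \le C_s k^{-s}
\end{equation*}
uniformly in $\tilde{x}\in U$, with $C_s$ depending only on $s$ and $\delta$. This is the claimed $O(k^{-s})$ estimate, read as a bound on modulus. The hypothesis $g\neq h$ is not needed for the inequality itself; it only indicates where the lemma is applied, namely to the off-diagonal terms of $\mathcal{K}_k^{\av}$.
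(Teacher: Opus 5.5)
Your proposal is correct and follows the paper's proof. Like the paper, you use the symmetry $\psi(b,\overline{a}) = \overline{\psi(a,\overline{b})}$ and the $G$-invariance of $\phi$ to rewrite $2\Real\bigl(\psi(h\cdot\tilde{x},\overline{g\cdot\tilde{x}}) - \phi(g\cdot\tilde{x})\bigr)$ in the form of \eqref{eq: diastasis estimate}. The only cosmetic difference is the final one-variable bound: the paper uses $|e^{kz}| \le s!/(-k\Real z)^s$ after treating $h\cdot\tilde{x} = g\cdot\tilde{x}$ separately, whereas you maximize $t^s e^{-k\delta t/2}$ directly, which covers that case automatically.
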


\begin{proof}
    If $\|h \cdot \tilde{x} - g \cdot \tilde{x}\| = 0$, then the desired estimate holds. So, suppose that $\|h \cdot \tilde{x} - g \cdot \tilde{x}\|^2 \neq 0$. By \eqref{eq: diastasis estimate} and the fact that $\phi$ is $G$-invariant and Hermitian,
    \begin{equation*}
    \begin{split}
        2 \Real (\psi(h \cdot \tilde{x}, \overline{g \cdot \tilde{x}}) - \phi(g \cdot \tilde{x})) & =  - \phi(h \cdot \tilde{x}) + \psi(g \cdot \tilde{x}, \overline{h \cdot \tilde{x}}) - \phi(g \cdot \tilde{x}) + \psi(h \cdot \tilde{x}, \overline{g \cdot \tilde{x}})  \\
        & \leq -\delta \|h \cdot \tilde{x} - g \cdot \tilde{x}\|^2.
    \end{split}
    \end{equation*}
    Recall that if $z$ is a complex number with $\Real(z) < 0$, then $|e^{kz}| \leq \frac{s!}{(-k\Real(z))^s}$ for all $k \geq 0$. It follows that
    \begin{equation*}
        \eta(\tilde{x})^k \leq \left (\frac{2}{\delta} \right )^s \frac{s!}{k^s \|h \cdot \tilde{x} - g \cdot \tilde{x}\|^{2s}}.
    \end{equation*}
\end{proof}

Fix nonnegative integers $N, p \geq 0$. Let $\zeta_{\ord(\mathcal{X})}$ be a primitive $\ord(\mathcal{X})$-th root of unity. Let $\{c_{ij}\}_{i,j \geq 0}$ be a set of nonnegative constants such that the polynomial
\begin{equation*}
\sum_{i,j \geq 0} c_{ij} \left ( \sum_{\mu \in (\mathbb{Z}_{\geq 0})^2, |\mu|=i} {T_1}^{\mu_1} {T_2}^{\mu_2} \right ){T_3}^j
\end{equation*} 
has total order at least $5N+5p+5$ at $(T_1,T_2,T_3) = ({\zeta_{\ord(\mathcal{X})}}^{m_1}, {\zeta_{\ord(\mathcal{X})}}^{m_2}, {\zeta_{\ord(\mathcal{X})}}^{m_1+m_2})$ for all $m_1$,$m_2$ $\in \mathbb{Z}$ such that $({\zeta_{\ord(\mathcal{X})}}^{m_1},{\zeta_{\ord(\mathcal{X})}}^{m_2})$ does not equal $(1,1)$. For all nonnegative integers $k \geq 0$, we can write \eqref{eq: orbifold Bergman function} as
\begin{equation*}
    B_k^{\orb}(x) = \sum_{i,j \geq 0} c_{ij}\tau_{i,k+j} \left ( K_{i,k+j}(x, x) \overline{H^i}(x)e^{-k\phi(x)}  \right ).
\end{equation*}

\begin{thm} \label{thm: main local}
There exists a neighborhood of the origin $U_0 \subseteq U$ and local sections $b_0, \dotsc, b_N$ of $ \End(\mathcal{E})$ on $U$ such that
    \begin{equation*}
        B_k^{\orb}(\tilde{x}) =  b_0(\tilde{x}) k^{n+1} + \dotsb + b_{N}(\tilde{x}) k^{n-N+1} + O(k^{n - N})
    \end{equation*}
    on $U_0$ with respect to the $C^p$-norm. The matrix-valued functions $b_q$ depend only on the constants $\{c_{ij}\}_{i,j \geq 0}$ and the derivatives of the metrics involved. In particular,
    \begin{equation*}
        b_0(x) = \sum_{i,j \geq 0} c_{ij} (i+1) \Id_E
    \end{equation*}
    and
    \begin{equation*}
    b_1(x) =  \sum_{i,j \geq 0} c_{ij} \left (nj + \frac{(i+1)(2i+1)}{2} \Scal_\omega(x) \right )\Id_E.
    \end{equation*}
\end{thm}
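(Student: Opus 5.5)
By \Cref{prop: C^p expansion}, applied to each bundle $\Sym^i\mathcal{E}\otimes\mathcal{L}^{k+j}$ for the finitely many pairs $(i,j)$ with $c_{ij}\neq 0$, I replace $K_{i,k+j}(\tilde x,\tilde x)$ by the averaged local kernel $\mathcal{K}^{\av}_{i,k+j}(\tilde x,\tilde x)$. This costs an error of $O(k^{n-N+p-1})$ in $C^p$. Taking $N$ larger at the outset, which the hypothesis of total order $5N+5p+5$ leaves room for, absorbs this into the required $O(k^{n-N})$.

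Unwinding the definition of $\mathcal{K}^{\av}$ then writes $B^{\orb}_k(\tilde x)$ as a double sum over $(g,h)\in G\times G$ of terms
\[
c_{ij}\,\tau_{i,k+j}\Big(\rho^{i,k+j}(h^{-1})\,\tilde b_{i,q}(h\tilde x,\overline{g\tilde x})\,\overline{H^i}(h\tilde x,\overline{g\tilde x})^{-1}\overline{H^i}(\tilde x)\,\rho^{i,k+j}(g^{-1})^*\Big)(k+j)^{n-q}\,\eta(\tilde x)^{k+j}.
\]
I first treat the diagonal terms $g=h$. Using $G$-invariance of $\phi,\psi$ and equivariance of $H$, they reduce to $|G|$ times the manifold expansion. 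I then expand $(k+j)^{n-q}$ and $\tau_{i,k+j}$ (linear in $k$ by \eqref{eq: tauik calculation}) in powers of $k$. This yields smooth coefficients $b_q$. Reading off $b_0$ uses $\tilde b_{i,0}=\Id$ and $\tau_{i,k}(\Id)$. Here the leading term is $k(i+1)\Id$, and $\det H$-normalization gives the stated $b_0$. For $b_1$ I combine the $k$-linear part of $\tau$ applied to $\tilde b_{i,1}(\tilde x,\overline{\tilde x})=\tfrac12\Scal_\omega\Id+\sqrt{-1}\Lambda F_{H^i}$, the subleading part of $\tau$, and the $nj$ coming from $(k+j)^n$. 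The curvature terms $\sqrt{-1}\Lambda F_{\Sym^iH\otimes h^{k+j}}$ must combine through $\tau$ into the scalar multiple of $\Scal_\omega$. This needs a short trace computation using $\mathfrak{s}^i$ and \eqref{eq: dual}.

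The main work is showing the off-diagonal terms $g\neq h$ contribute $O(k^{n-N})$ in $C^p$. The plan follows \cite{RT} and has three steps.

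\textbf{Step 1.} Taylor expand every smooth factor at $h\tilde x=g\tilde x$. By \Cref{lem: claim}, each power $\|h\tilde x-g\tilde x\|^{2s}$ gains $k^{-s}$, so Taylor terms of high order are negligible. This leaves finitely many terms of the form polynomial $\times\ \eta^{k+j}$.

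\textbf{Step 2.} Use \Cref{prop: shape of coefficients} to write each $\tilde b_{i,q}$, and each $y$-derivative of the $H^i$-quotient, as a combination of products of at most roughly $q+$(Taylor order) factors $\mathfrak{s}^i(M_l)$, with $M_l$ independent of $i$. Since $\rho$ is diagonal, conjugation by $\rho^{i,k+j}$ multiplies the $(\mu,\nu)$ entry by characters $\zeta^{\mu_1 m_1+\mu_2 m_2}\zeta^{(m_1+m_2)(k+j)}$ for the appropriate $(m_1,m_2)$ determined by $g^{-1}h$. Since $\tau$ only reads the entries with $|\mu-\nu|\le 1$ by \eqref{eq: tauik calculation}, it suffices to control those entries.

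\textbf{Step 3.} Apply \Cref{prop: diff op} together with the $k$-dependence of $\eta^{k+j}$, which I also express via $T_3$-derivatives. The sum over $(i,j)$ with weights $c_{ij}$ then becomes a fixed differential operator in $(T_1,T_2,T_3)$, of order at most about $5(N+p+1)$ once $C^p$ derivatives are included. This operator is applied to the generating polynomial and evaluated at $(\zeta^{m_1},\zeta^{m_2},\zeta^{m_1+m_2})$. Because $g\neq h$ and $\mathcal{E}$ is locally ample, $\rho$ is faithful, so $(\zeta^{m_1},\zeta^{m_2})\neq(1,1)$. The total order hypothesis then forces all such evaluations to vanish.

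Step 3 is the main obstacle: I would first verify carefully that the differential orders stay below $5N+5p+5$, counting Taylor order, number of $\mathfrak{s}^i$ factors, the extra order from $L_{\pm1}$, the powers of $k+j$, and the $p$ derivatives.
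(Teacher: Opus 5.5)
Your overall architecture is the paper's. You replace the Bergman kernels by the averaged local kernels; the paper keeps the local expansion to order $N+p$ rather than enlarging $N$. You separate $g=h$ from $g\neq h$. You truncate the $i$-dependent factor; the paper expands $e^{\mathfrak{s}^i(M)}$, and your Taylor expansion of the $H^i$-quotient via \Cref{prop: shape of coefficients} is equivalent. Finally you turn the weighted $(i,j)$-sum into a differential operator on the generating polynomial via \Cref{prop: diff op} and the faithfulness of $\rho$.

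A minor point: because of the factor $\frac{1}{|G|}$ in $\mathcal{K}^{\av}$, the diagonal terms give one copy of the manifold expansion, not $|G|$ copies. Otherwise your $b_0$ would be off by $|G|$.

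Two steps, however, do not work as written.

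\textbf{The vanishing mechanism in Step 3.} After truncation, each off-diagonal term still carries the $j$-dependent factor $\eta(\tilde x)^{j}$. So the weighted sum is your operator $D$ applied to the generating polynomial and evaluated at $(\sigma_1,\sigma_2,\sigma_1\sigma_2\,\eta(\tilde x))$, not at $(\zeta^{m_1},\zeta^{m_2},\zeta^{m_1+m_2})$, and it does not vanish.

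The paper uses the total-order hypothesis only to conclude that this function has a zero at $\eta=1$ of order $5N+5p+5-\ord D$. Since $\eta-1=O(\|h\tilde x-g\tilde x\|)$, \Cref{lem: claim} applied to the remaining factor $\eta^{k}$ then converts this zero into a negative power of $k$. So only the $j$-dependence may be absorbed into $T_3$. The factor $\eta^k$ must stay outside the generating function, because it is the sole source of decay.

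An equivalent fix is to Taylor-expand $\eta^j=e^{j\log\eta}$ in $\log\eta=O(\|h\tilde x-g\tilde x\|)$ and absorb each $j^m$ into $(T_3\partial_{T_3})^m$. Then you really do evaluate at the root of unity and get zero. In either form, this extra vanishing or Taylor order must appear explicitly in your count against $5N+5p+5$.

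\textbf{The coefficient $b_1$.} No trace computation can produce the stated formula. Since $F_{H^i}=\mathfrak{s}^i(F_H)$ and $\tr\mathfrak{s}^i(A)=\frac{i(i+1)}{2}\tr A$ in rank $2$, you get
\[
\tr\tilde b_{i,1}(\tilde x,\overline{\tilde x})=\tfrac{i+1}{2}\Scal_\omega+\tfrac{i(i+1)}{2}\sqrt{-1}\Lambda F_h .
\]
Because $F_h=-2\pi\sqrt{-1}\omega$, the last term is a constant with no $\Scal_\omega$ in it. It equals $\frac{i(i+1)}{2}n$, as one sees by comparing \Cref{prop: local reproducing kernel} for $\mathcal{L}\otimes\mathcal{L}^k$ with the expansion for $\mathcal{L}^{k+1}$.

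Two further contributions at order $k^n$ are missing. First, $\tau_{i,0}(\tilde b_{i,0})=\tau_{i,0}(\Id)=\frac{i(i+1)}{2}\Id_E$ contributes. Second, $(i+1)(k+j)^{n+1}$ contributes $(n+1)(i+1)j$, not $nj$. Carrying the computation through yields
\[
b_1=\sum_{i,j\ge 0}c_{ij}\,(i+1)\Big((n+1)\big(j+\tfrac{i}{2}\big)+\tfrac12\Scal_\omega\Big)\Id_E .
\]

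As a check, take $\mathbb{P}^2$ and $\mathcal{E}=\mathcal{O}(1)^{\oplus 2}$ with the product Fubini--Study metric. There are no orbifold points, so any $c_{ij}$ is allowed, and $\Scal_\omega=3$ in the normalization of \Cref{thm: cyclic}. One computes exactly
\[
\tau_{i,k}(B_{i,k})=(i+1)\big(k+\tfrac i2\big)\big(k+\tfrac{i+1}{2}\big)\big(k+\tfrac{i+2}{2}\big)\Id_E ,
\]
whose $k^2$-coefficient is $\frac32(i+1)^2$. The stated formula gives $\frac32(i+1)(2i+1)$, which differs for $i\ge 1$.

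The paper's proof contains the same unjustified step: it equates $\tr(\sqrt{-1}\Lambda F_{H^i})+\frac{i+1}{2}\Scal_\omega$ with $\frac{(i+1)(2i+1)}{2}\Scal_\omega$. So this is a defect in the stated $b_1$ rather than in your strategy, but the step you propose cannot be carried out.
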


\begin{proof}

Define $\mathcal{B}^{\orb}_k : U \times U \to M_{2 \times 2}(\mathbb{C})$ by
\begin{equation*}
\mathcal{B}^{\orb}_k(\tilde{x}) = \sum_{i,j \geq 0} c_{ij}\tau_{i,k+j} \left (  \overline{\mathcal{K}_{i,k+j}^{\av} (\tilde{x}, \tilde{x})} \overline{H^i}(\tilde{x}) e^{-(k+j)\phi(\tilde{x})}  \right ),
\end{equation*}
where
\begin{equation*}
\overline{\mathcal{K}_{i,k+j}^{\av} (\tilde{x}, \tilde{x})} = \frac{1}{|G|} \sum_{g,h \in G} \rho^{i,k+j}(h^{-1})\left ( \sum_{q = 0}^{N+p} \tilde{b}_{i,q} (h \cdot \tilde{x},\overline{g \cdot \tilde{x}}) k^{n-q} \right ) \overline{H^i}(h \cdot \tilde{x},\overline{g \cdot \tilde{x}})^{-1} e^{k\psi(h \cdot \tilde{x},\overline{g \cdot \tilde{x}})} \rho^{i,k+j}(g^{-1})^*.
\end{equation*}
By \Cref{prop: C^0 expansion}, there exists an open subset $U_0 \subseteq U$ 
such that
\begin{equation*}
    B_k^{\orb}(\tilde{x}) = \mathcal{B}_k^{\orb}(\tilde{x}) + O(k^{n-N}) 
\end{equation*}
with respect to the $C^p$-topology.

We observe that
\begin{equation*}
\begin{split}
    B_k^{\orb}(\tilde{x}) & = \sum_{i,j \geq 0} c_{ij}\tau_{i,k+j} \left ( \frac{1}{|G|} \sum_{g,h \in G} \rho^{i,k+j}(h^{-1}) \overline{\mathcal{K}_{i,k+j} (g \cdot \tilde{x}, h \cdot \tilde{x})} \rho^{i,k+j}(g^{-1})^* \overline{H^i}(\tilde{x})e^{-(k+j)\phi(\tilde{x})}  \right ) \\
    & = S_1 + S_2,
\end{split}
\end{equation*}
where $S_1$ consists of terms with $g=h$ and $S_2$ consists of terms with $g \neq h$. We will show below that $S_2 = O(k^{n-N})$. On the other hand,
\begin{equation*}
    \begin{split}
        S_1 
        & = \sum_{i,j \geq 0} c_{ij}\tau_{i,k+j} \left ( \frac{1}{|G|} \sum_{g \in G} \rho^{i,k+j}(g^{-1}) \overline{\mathcal{K}_{i,k+j} (g \cdot \tilde{x}, g \cdot \tilde{x})} \overline{H^i}(g \cdot \tilde{x})e^{-(k+j)\phi(g \cdot \tilde{x})} \rho^{i,k+j}(g)  \right ) \\
        & = \sum_{i,j \geq 0} c_{ij}\tau_{i,k+j} \left ( \frac{1}{|G|} \sum_{g \in G} \rho^{i,k+j}(g^{-1}) \left ( \sum_{q =0}^{N+p} \tilde{b}_{i,q} (g \cdot \tilde{x}, \overline{g \cdot \tilde{x}}) (k+j)^{n-q} \right ) \rho^{i,k+j}(g) \right ).
    \end{split}
\end{equation*}
Thus, up to an error term of $O(k^{n-N})$ in the $C^p$-topology, we have that
\begin{equation} \tag{\textasteriskcentered} \label{eq: S_1}
\begin{split}
    S_1 & =   \sum_{q=0}^{N-1}  \sum_{i,j \geq 0} c_{ij}\frac{1}{|G|} \sum_{g \in G}\tau_{i,0} \left ( \rho^{i,0}(g^{-1}) \tilde{b}_{i,q}(g \cdot \tilde{x},\overline{g \cdot \tilde{x}}) \rho^{i,0}(g) \right ) (k+j)^{n-q} \\
    & +   \sum_{q=0}^{N-1} \sum_{i,j \geq 0} c_{ij} \frac{1}{|G|} \sum_{g \in G} \tr \left ( \rho^{i,0}(g^{-1})\tilde{b}_{i,q+1}(g \cdot \tilde{x},\overline{g \cdot \tilde{x}})\rho^{i,0}(g) \right ) \Id  (k+j)^{n-q},
\end{split}
\end{equation}
where $\tilde{b}_{i,-1} = 0$. By \eqref{eq: dual}, we can write
\begin{equation*}
\sum_{g \in G}\tau_{i,0} \left ( \rho^{i,0}(g^{-1}) \tilde{b}_{i,q}(g \cdot \tilde{x},\overline{g \cdot \tilde{x}}) \rho^{i,0}(g) \right ) = \sum_{g \in G} \rho(g^{-1}) \tau_{i,0} \left (  \tilde{b}_{i,q}(g \cdot \tilde{x},\overline{g \cdot \tilde{x}}) \right ) \rho(g).
\end{equation*}
So, $\sum_{g \in G} \rho(g^{-1}) \tau_{i,0} \left (  \tilde{b}_{i,q}(g \cdot \tilde{x},\overline{g \cdot \tilde{x}}) \right ) \rho(g)$ is a $G$-equivariant section of $\End(E)$ for each $i$ and $q$. Moreover, the function
\begin{equation*}
\frac{1}{|G|} \sum_{g \in G} \tr \left ( \rho^{i,0}(g^{-1})\tilde{b}_{i,q+1}(g \cdot \tilde{x},\overline{g \cdot \tilde{x}})\rho^{i,0}(g) \right )
\end{equation*}
is $G$-invariant, because $\tr$ is invariant under conjugation.

If $q < n$, we can expand $(k+j)^{n-q}$ using the binomial expansion 
\begin{equation*}
    (k+j)^{n-q} = \sum_{\ell = 0}^{N-1} \binom{n-q}{\ell} j^\ell k^{n-q-\ell}  + O(k^{n-N}).    
\end{equation*}
If $q > n$, then
\begin{equation*}
    \begin{split}
        (k+j)^{n-q} & = k^{n-N+1} \frac{k^{N-n-1}}{(k+j)^{q-n}} \\
        & = k^{n-N+1} \left (f(k) + \frac{r(k)}{(k+j)^{q-n}} \right ),
    \end{split}
\end{equation*}
where $f(k)$ and $r(k)$ are polynomials in $k$ such that $\deg (f(k)) = N-q-1$ and $\deg(r(k)) < q-n$. We see that
\begin{equation*}
     (k+j)^{n-q} = k^{n-N+1}f(k) + O(k^{n-N}).
\end{equation*}
So, we expand every power of $(k+j)$ in \eqref{eq: S_1} to obtain local sections $b_0, \dotsc, b_N$ of $\End(\mathcal{E})$ such that the sections do not depend on $k$ and
\begin{equation*}
    S_1 = b_0(x)k^{n+1} + \dotsb + b_N(x) k^{n-N+1} + O(k^{n-N}).
\end{equation*}
In particular,
\begin{equation*}
    b_0(x) = \sum_{i,j \geq 0} c_{ij} (i+1) \Id_E
\end{equation*}
and
\begin{equation*}
\begin{split}
    b_1(x) & = \sum_{i,j \geq 0} c_{ij} \left (nj + \tr\left ( \sqrt{-1} \Lambda F_{H^i} \right ) + \frac{i+1}{2} \Scal_\omega(x)   \right )\Id_E\\
    & =  \sum_{i,j \geq 0} c_{ij} \left (nj  + \frac{(i+1)(2i+1)}{2} \Scal_\omega(x) \right )\Id_E.
\end{split}
\end{equation*}

It remains to show that $S_2 = O(k^{n-N})$ in the $C^p$-topology. We write $S_2 = \frac{1}{|G|}\sum_{g \neq h} S_{g,h}$, where
\begin{equation*}
    S_{g,h} = \sum_{i,j \geq 0} c_{ij}\tau_{i,k+j} \left (  \rho^{i,k+j}(h^{-1}) \overline{\mathcal{K}_{i,k+j} (g \cdot \tilde{x}, h \cdot \tilde{x})}  \overline{H^i}(g \cdot \tilde{x}) e^{-(k+j)\phi(g \cdot \tilde{x})}\rho^{i,k+j}(g)  \right ).
\end{equation*}
Fix $g,h \in G$ such that $g \neq h$.
We replace $U_0$ with a smaller neighborhood of the origin, if necessary, such that \eqref{eq: diastasis estimate} holds, $M: U_0 \to M_{2 \times 2}(\mathbb{C})$ defined by 
\begin{equation*}
    M(x) =  \log \left ( \overline{H}(h \cdot \tilde{x}, \overline{g \cdot \tilde{x}} )^{-1} \overline{H} (g \cdot \tilde{x}) \right )
\end{equation*}
is well-defined, and $M(\tilde{x}) = O(\|h \cdot \tilde{x} - g \cdot \tilde{x}\|)$. Set
\begin{equation*}
    \eta(\tilde{x}) = e^{\psi(h\cdot \tilde{x}, \overline{g \cdot \tilde{x}}) - \phi(g \cdot \tilde{x})}.
\end{equation*}
Then,
\begin{equation*}
    S_{g,h} = \sum_{i,j \geq 0}   \sum_{q=0}^{N+p} c_{ij}  \tau_{i,k+j} \left (\rho^{i,k+j}(h^{-1}) \tilde{b}_{i,q}(h \cdot \tilde{x}, \overline {g \cdot \tilde{x}}) e^{\mathfrak{s}^i(M(\tilde{x}))} \rho^{i,k+j}(g) \right ) \eta(\tilde{x})^{k+j} (k+j)^{n-q} .
\end{equation*}
As before, we expand every power of $(k+j)$ and use the identity 
\begin{equation*}
\tau_{i,k+j}(A) = (k+j)\tr(A) \Id + \tau_{i,0}(A)
\end{equation*}
to write
\begin{equation*}
    S_{g,h} =   f_0(\tilde{x}) k^{n+1} + \dotsb  + f_{N+p+1}(\tilde{x}) k^{n-N-p}
\end{equation*}
where $f_0(\tilde{x})$, $\dotsc$, $f_{N+p+1}(\tilde{x})$ are functions with values in $M_{2 \times 2}(\mathbb{C})$. Let $\alpha$ be a multi-index in $(\mathbb{Z}_{\geq 0})^{2n}$ with $|\alpha| \leq p$. We will show that $D^\alpha f_q(\tilde{x}) = O(k^{q-N-1})$ for all $0 \leq q \leq N+p+1$.

We will first consider the case when $q \leq n$. Set
\begin{equation*}
\sigma_1 = \zeta_a({h_1}^{-1}g_1) \zeta_b({h_2}^{-1}g_2) \hspace{10pt} \text{  and } \hspace{10pt} \sigma_2 = \zeta_a'({h_1}^{-1}g_1) \zeta_b'({h_2}^{-1}g_2).    
\end{equation*}
Then,
\begin{equation*}
\begin{split}
    D^\alpha f_q(\tilde{x}) & = \sum_{\ell = 0}^q \binom{n-\ell}{q - \ell}  \sum_{i,j \geq 0}  c_{ij} \tau_{i,0} \left (\rho^{i,0}(h^{-1}) \beta_{i,\ell}(h \cdot \tilde{x}, \overline{g \cdot \tilde{x}}) e^{\mathfrak{s}^i(M(x))} \rho^{i,0}(g) \right )  j^{q - \ell}(\sigma_1\sigma_2)^{k+j} \eta(x)^{k+j} \\
    & \hspace{10pt} +  \sum_{\ell = 0}^{q+1} \binom{n+1-\ell}{q+1-\ell}  \sum_{i,j \geq 0}  c_{ij}  \tr\left (\rho^{i,0}(h^{-1}) \beta_{i,\ell}(h \cdot \tilde{x}, \overline{g \cdot \tilde{x}}) e^{\mathfrak{s}^i(M(\tilde{x}))} \rho^{i,0}(g) \right ) j^{q+1-\ell}(\sigma_1\sigma_2)^{k+j}\eta(\tilde{x})^{k+j},
\end{split}
\end{equation*}
where $\beta_{i,\ell}$ is a linear combination of elements of the form $(k+j)^m \gamma$
with $0 \leq m \leq p$ and $\gamma \in R_{\ell + p - m}$. We observe that, for any $0 \leq \ell \leq q$,
\begin{equation*}
\begin{split}
    & \tau_{i,0} \left (\rho^{i,0}(h^{-1}) \beta_{i,\ell}(h \cdot \tilde{x}, \overline{g \cdot \tilde{x}}) \left ( \sum_{n=2(N+p+1)}^\infty \frac{\mathfrak{s}^i(M(\tilde{x}))^n}{n!} \right )  \rho^{i,0}(g) \right ) \eta(\tilde{x})^{k} \\
    = \hspace{3pt} & O \left (k^p \|h \cdot \tilde{x} - g \cdot \tilde{x} \|^{2(N+p+1)} \right ) \eta(\tilde{x})^k \\
    & \text{ since $M(\tilde{x}) = O( \|h \cdot \tilde{x} - g \cdot \tilde{x}\|)$} \\
    = \hspace{3pt} & O(k^{-N-1}) \\
    & \text{ by \Cref{lem: claim}.}
\end{split}
\end{equation*}
Similarly,
\begin{equation*}
    \tr\left (\rho^{i,0}(h^{-1}) \beta_{i,\ell}(h \cdot \tilde{x}, \overline{g \cdot \tilde{x}}) \left ( \sum_{n=2(N+p+1)}^\infty \frac{\mathfrak{s}^i(M(\tilde{x}))^n}{n!} \right ) \rho^{i,0}(g) \right ) \eta(\tilde{x})^{k} = O(k^{-N-1}).
\end{equation*}
Thus, it suffices to show that, for any $0 \leq \ell \leq q$, $0 \leq m \leq p$, and $\gamma \in R_{\ell+p-m}$,
\begin{equation*}
    \sum_{i,j \geq 0}  c_{ij} \tau_{i,0} \left (\rho^{i,0}(h^{-1}) \gamma(h \cdot \tilde{x}, \overline{g \cdot \tilde{x}}) \left ( \sum_{n=0}^{2N+2p + 1} \frac{\mathfrak{s}^i(M(\tilde{x}))^n}{n!}\right ) \rho^{i,0}(g) \right )  j^{q - \ell}(\sigma_1\sigma_2)^{k+j} \eta(\tilde{x})^{k+j} = O(k^{q-N-m-1})
\end{equation*}
and
\begin{equation*}
    \sum_{i,j \geq 0}  c_{ij} \tr \left (\rho^{i,0}(h^{-1}) \gamma(h \cdot \tilde{x}, \overline{g \cdot \tilde{x}}) \left ( \sum_{n=0}^{2N+2p +1} \frac{\mathfrak{s}^i(M(\tilde{x}))^n}{n!}\right ) \rho^{i,0}(g) \right )  j^{q+1-\ell}(\sigma_1\sigma_2)^{k+j} \eta(\tilde{x})^{k+j} = O(k^{q-N-m-1}).
\end{equation*}

Let $0 \leq \ell \leq N$, $0 \leq m \leq p$ be given, and $\gamma \in R_{\ell + p - m}$. Set
\begin{equation*}
    A^i(\tilde{x}) = \gamma (h \cdot \tilde{x}, \overline{g \cdot \tilde{x}}) \left ( \sum_{n=0}^{2N+2p+1} \frac{\mathfrak{s}^i(M(\tilde{x}))^n}{n!} \right ).
\end{equation*}
By \eqref{eq: tauik calculation},
\begin{equation*}
    \tau_{i,0} \left ( \rho^{i,0}(h^{-1}) A^i(\tilde{x}) \rho^{i,0}(g) \right ) =
    \begin{pmatrix}
        P_1 & P_2 \\
        P_3 & P_4 \\
    \end{pmatrix}
\end{equation*}
where
\begin{equation*}
\begin{split}
    P_1 & = \sum_{|\mu| = i} \mu_1  A^i_{\mu\mu}(\tilde{x}) {\sigma_1}^{\mu_1}{\sigma_2}^{\mu_2},  \\
    P_2 & = \zeta_a({h_1}^{-1})\zeta_b(h_2^{-1}) \zeta'_a(g_1)\zeta'_b(g_2) \sum_{|\mu|=i, \hspace{2pt} \mu_2 \geq 1} \sqrt{(\mu_1+1)\mu_2} A^i_{(\mu_1+1,\mu_2-1)\mu}(\tilde{x}) {\sigma_1}^{\mu_1} {\sigma_2}^{\mu_2-1},\\
    P_3 & = \zeta_a(g_1)\zeta_b(g_2)\zeta_a'({h_1}^{-1}) \zeta'_b({h_2}^{-1}) \sum_{|\mu|=i, \hspace{2pt} \mu_1 \geq 1} \sqrt{\mu_1(\mu_2+1)} A^i_{(\mu_1-1,\mu_2+1)\mu}(\tilde{x}) {\sigma_1}^{\mu_1-1}{\sigma_2}^{\mu_2}, \text{ and }  \\
    P_4 & =  \sum_{|\mu| = i} \mu_2 A^i_{\mu\mu}(\tilde{x}) {\sigma_1}^{\mu_1} {\sigma_2}^{\mu_2}.
\end{split}
\end{equation*}
By \Cref{prop: diff op}, there exist differential operators $L_0$, $L_1$, and $L_{-1}$ such that
\begin{equation*}
\begin{split}
    P_1 & = \left (T_1\tanv{T_1} \right ) \circ L_0   \left ( \sum_{|\mu|=i} {T_1}^{\mu_1} {T_2}^{\mu_2} \right )  \Bigg |_{T_1 = \sigma_1, T_2 = \sigma_2}, \\
    P_2 & = \zeta_a({h_1}^{-1})\zeta_b(h_2^{-1}) \zeta'_a(g_1)\zeta'_b(g_2) L_1  \left ( \sum_{|\mu|=i} {T_1}^{\mu_1} {T_2}^{\mu_2} \right )  \Bigg |_{T_1 = \sigma_1, T_2 = \sigma_2}, \\
    P_3 & = \zeta_a(g_1)\zeta_b(g_2)\zeta_a'({h_1}^{-1}) \zeta'_b({h_2}^{-1}) L_{-1}  \left ( \sum_{|\mu|=i} {T_1}^{\mu_1} {T_2}^{\mu_2} \right )  \Bigg |_{T_1 = \sigma_1, T_2 = \sigma_2}, \text{ and } \\
    P_4 & =   \left ( T_2\tanv{T_2} \right ) \circ L_0   \left ( \sum_{|\mu|=i} {T_1}^{\mu_1} {T_2}^{\mu_2} \right )  \Bigg |_{T_1 = \sigma_1, T_2 = \sigma_2}.
\end{split}
\end{equation*}
Define differential operators $D_1$, $D_2$, $D_3$, and $D_4$ by
\begin{equation*}
\begin{split}
    D_1 & = \left ( T_3 \tanv{T_3} \right )^{q-\ell} \circ \left ( T_1\tanv{T_1} \right )\circ L_0,    \\
    D_2 & = \zeta_a({h_1}^{-1})\zeta_b(h_2^{-1}) \zeta'_a(g_1)\zeta'_b(g_2)   \left ( T_3 \tanv{T_3} \right )^{q - \ell} \circ L_{1}, \\
    D_3 & = \zeta_a(g_1)\zeta_b(g_2)\zeta_a'({h_1}^{-1}) \zeta'_b({h_2}^{-1})   \left ( T_3 \tanv{T_3} \right )^{q - \ell} \circ L_{-1}, \text{ and } \\
    D_4 & = \left ( T_3 \tanv{T_3} \right )^{q - \ell} \circ \left ( T_2\tanv{T_2} \right )\circ L_0.
\end{split}
\end{equation*}
Note that the orders of the differential operators $D_1$, $\dotsc$, $D_4$ are at most $q + 2N+ 3p + 1$ and that $(\sigma_1,\sigma_2) \neq (1,1)$ because $\rho$ is a faithful representation. So,
\begin{equation*}
\begin{split}
    & \sum_{i,j \geq 0} c_{ij} \tau_{i,0} \left ( \rho^{i,0}(h) A^i(\tilde{x}) \rho^{i,0}(g^{-1}) \right ) j^{q - \ell}(\sigma_1\sigma_2)^{j} \eta(\tilde{x})^{j} \\
    = \hspace{3pt} & 
    \begin{pmatrix}
        D_1 & D_2 \\
        D_3 & D_4\\
    \end{pmatrix}
    \left ( \sum_{i,j} c_{ij} \sum_{|\mu| = i} {T_1}^{\mu_1}{T_2}^{\mu_2}{T_3}^j \right )\Bigg |_{T_1 = \sigma_1, T_2 = \sigma_2, T_3 = \sigma_1\sigma_2 \eta(\tilde{x})}
\end{split}
\end{equation*}
has a zero of order at least $2N+2p+4$ at $\eta(\tilde{x}) = 1$. In particular,
\begin{equation*}
    \frac{\sum_{i,j \geq 0} c_{ij} \tau_{i,0} \left ( \rho^{i,0}(h) A^i(\tilde{x}) \rho^{i,0}(g^{-1}) \right ) j^{q - \ell}(\sigma_1\sigma_2)^{j} \eta(\tilde{x})^{j}}{(\eta(\tilde{x}) - 1)^{2N+2p+4}} = O(1).
\end{equation*}
Note that $\eta(\tilde{x})-1 = O(\|h \cdot \tilde{x} - g \cdot \tilde{x}\|)$. Thus, by \Cref{lem: claim},
\begin{equation*}
    \begin{split}
        \sum_{i,j \geq 0}  c_{ij} \tau_{i,0} \left (\rho^{i,0}(h) A^i(\tilde{x}) \rho^{i,0}(g^{-1}) \right )  j^{q - \ell}(\sigma_1\sigma_2)^{k+j} \eta(\tilde{x})^{k+j} &  = O(\|h \cdot \tilde{x} - g \cdot \tilde{x}\|^{2N+2p+4}) \eta(\tilde{x})^k \\
        & =  O(k^{-N-p-2}).
    \end{split}
\end{equation*}
An analogous argument implies that
\begin{equation*}
    \begin{split}
        \sum_{i,j \geq 0}  c_{ij} \tr \left (\rho^{i,0}(h) A^i(\tilde{x}) \rho^{i,0}(g^{-1}) \right )  j^{q - \ell}(\sigma_1\sigma_2)^{k+j} \eta(\tilde{x})^{k+j}  =   O(k^{-N-p-1}).
    \end{split}
\end{equation*}

It remains to consider the case when $q \geq n+1$. In the equation
\begin{equation*}
     (k+j)^{n-l} = k^{n-N+ 1}f(k) + O(k^{n-N}),
\end{equation*}
the coefficients of $f(k)$ are polynomials in $j$ of degree at most $N-l-1$. Thus, we can apply arguments similar to the ones above.

\end{proof}

\Cref{thm: main} now follows from \Cref{thm: main local}, the fact that $X$ is compact, and the fact that the local sections $b_0$, $\dotsc$, $b_N$ of $\End(\mathcal{E})$ do not depend on $k$.

\newpage

\appendix

\section{Orbifold Charts and Orbifold Vector Bundle Trivializations}

In this section, we record some lemmas that are helpful for local calculations. \Cref{lem: orbifold chart} says that orbifolds are locally quotients of open subset of $\mathbb{C}^n$ by linear automorphisms (taken from $\mathsection 1.1$ of \cite{S}, and we include the proof for the sake of completeness). \Cref{lem: orbifold vector bundle trivialization} says that if $\mathcal{E}$ is a holomorphic orbifold vector bundle over a complex orbifold $\mathcal{X} = (X,\mathcal{U})$, then $\mathcal{E}$ is locally isomorphic to $(U \times \mathbb{C}^r) / G$, where $U$ is an open subset of $\mathbb{C}^n$ and $G$ is a finite group that acts on $U$ and $\mathbb{C}^r$.

\begin{lem}  \textnormal{(\cite{S}, $\mathsection 1.1$)} \label{lem: orbifold chart}
Let $\mathcal{X} = (X,\mathcal{U})$ be a complex orbifold and let $x \in X$. Then, there exists an orbifold chart $(U, G, \varphi) \in \mathcal{U}$ such that $G = G_x$, $0 \in U$, $\varphi(0)= x$, and $G$ acts linearly on $U$.
\end{lem}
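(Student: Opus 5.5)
The plan is the standard linearization argument: average a Hermitian metric to make the isotropy group act unitarily, then use Cartan's trick to produce holomorphic coordinates in which $G_x$ acts linearly, and shrink to a $G_x$-stable neighborhood.

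First, start from any chart $(V,H,\psi)\in\mathcal{U}$ with $x\in\psi(V)$, and pick $\tilde{x}\in V$ with $\psi(\tilde{x})=x$. Since $H$ is finite and acts by biholomorphisms, I choose a small connected $H_{\tilde{x}}$-stable neighborhood $V'$ of $\tilde{x}$ with $hV'\cap V'=\emptyset$ for $h\notin H_{\tilde{x}}$. For example, take a ball in an $H_{\tilde x}$-invariant metric, small enough that its $H$-translates by non-stabilizing elements are disjoint. Then $(V',H_{\tilde{x}},\psi|_{V'})$ is an orbifold chart, since $V'/H_{\tilde{x}}\to\psi(V')$ is a homeomorphism onto an open set. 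The inclusion is an embedding, so by maximality this chart lies in $\mathcal{U}$. Here $G:=H_{\tilde{x}}=G_x$. Translate so that $\tilde{x}=0$.

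Next comes the linearization. Let $\rho:G\to\GL_n(\mathbb{C})$, $\rho(g)=d g(0)$; this is well defined because each $g$ fixes $0$ and is holomorphic. Define
\[
\Phi(z)=\frac{1}{|G|}\sum_{g\in G}\rho(g)^{-1}\,g(z).
\]
This map is holomorphic, satisfies $\Phi(0)=0$ and $d\Phi(0)=\Id$, and is equivariant: $\Phi(h z)=\rho(h)\Phi(z)$, by reindexing $g\mapsto gh$. By the holomorphic inverse function theorem, $\Phi$ is a biholomorphism from a neighborhood $W$ of $0$ onto a neighborhood of $0$. I replace $W$ by $\bigcap_g gW$, then by a connected $G$-stable sub-neighborhood. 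One way to get this is to take $U=\Phi(W)\cap B$ with $B$ a small ball for a $\rho(G)$-invariant Hermitian inner product, obtained by averaging; $U$ is connected because it is the image of such a ball under $\Phi^{-1}$, after shrinking appropriately. Then $U\subseteq\mathbb{C}^n$ is connected, $\rho(G)$-invariant, and $G$ acts on it linearly via $\rho$.

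Finally, set $\varphi=\psi\circ\Phi^{-1}:U\to X$. Then $(U,\rho(G),\varphi)$ is a holomorphic orbifold chart: $\varphi$ is open and induces $U/G\cong\varphi(U)$ because $\Phi$ intertwines the actions. The map $\Phi^{-1}$ is a holomorphic embedding into $(V',G,\psi)$, so the new chart is compatible and belongs to the maximal atlas $\mathcal{U}$. We also have $\varphi(0)=x$. The action $\rho$ is faithful, since a finite-order biholomorphism fixing $0$ with trivial differential is the identity by the same averaging argument. Hence $\rho(G)\cong G_x$.

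The main point to check carefully, though routine, is the shrinking step. The neighborhood must simultaneously be connected, $G$-stable, inside the domain where $\Phi$ is invertible, and have disjoint translates under $H\setminus G$. Taking invariant balls in averaged metrics handles all of these requirements.
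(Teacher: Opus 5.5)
Your proposal is correct and is essentially the paper's argument: the paper also first shrinks to a stable neighborhood whose group is $G_{\tilde{x}}$, then uses the averaged coordinates $w=\frac{1}{|G|}\sum_{g}A(g^{-1})\,(z\circ g)$ (exactly your $\Phi=\frac{1}{|G|}\sum_g\rho(g)^{-1}g$), proves $w\circ h=A(h)\circ w$, and takes $(w|_{U'})^{-1}$ as the holomorphic embedding of charts. The only difference is that the paper cites Lemma 2.10 of \cite{MM} for faithfulness, whereas you derive it from the averaging; to finish that step you also need the identity theorem on the connected chart, to pass from $g$ acting trivially near $0$ to $g=e$.
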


\begin{proof}

Choose a holomorphic orbifold chart $(U,G,\varphi) \in \mathcal{U}$ such that $x \in \varphi(U)$. Choose $\tilde{x} \in U$ such that $\varphi(\tilde{x}) = x$. Because $G$ is finite, we can find an arbitrarily small open $G$-stable neighborhood $S$ of $\tilde{x}$ in $U$ such that $G_x = G_{\tilde{x}} = G_S$. Moreover, by Lemma 2.10 of \cite{MM}, $G_x$ acts faithfully on $U$. Then, the inclusion map $\lambda : (S,G_x, \varphi_{\mid S}) \to (U,G,\varphi)$ is a holomorphic embedding between orbifold charts and $(S,G_x,\varphi_{\mid S}) \in \mathcal{U}$. So, after replacing $(U,G,\varphi)$ with $(S,G_x,\varphi_{\mid S})$, we may assume that $G = G_x$. We may further assume that there exists a coordinate system $z = (z_1, \dotsc, z_n)$ on $U$ such that $z(\tilde{x}) = 0$.

For each $i \in \{1, \dotsc, n\}$, define $w_i : U \to \mathbb{C}$ by
\begin{equation*}
w_i = \frac{1}{|G|} \sum_{g \in G}\sum_{j=1}^n \deriv{(z_i \circ g^{-1})}{z_j} (\tilde{x})z_j \circ g.
\end{equation*}
Then, $w = (w_1, \dotsc, w_n)$ is a coordinate system on a neighborhood of $\tilde{x}$ since
\begin{equation*}
\begin{split}
\deriv{w_i}{z_k} (\tilde{x}) & = \frac{1}{|G|} \sum_{g \in G} \sum_{j=1}^n \deriv{(z_i \circ g^{-1})}{z_j} (\tilde{x}) \deriv{(z_j \circ g)}{z_k} (\tilde{x}) \\  
    & = \delta_{ik}.
\end{split}
\end{equation*}
Furthermore, for any $h \in G$, we see that
\begin{equation*}
\begin{split}
w_i \circ h & = \frac{1}{|G|} \sum_{g \in G} \sum_{j=1}^n \deriv{(z_i \circ g^{-1})}{z_j} (\tilde{x})z_j \circ g \circ h \\
    & = \frac{1}{|G|} \sum_{g \in G} \sum_{j=1}^n \deriv{(z_i \circ h \circ g^{-1})}{z_j} (\tilde{x})z_j \circ g \\
    & = \frac{1}{|G|} \sum_{g \in G} \sum_{k=1}^n \sum_{j=1}^n \deriv{(z_i \circ h)}{z_k}(\tilde{x}) \deriv{(z_k \circ g^{-1})}{z_j} (\tilde{x})z_j \circ g \\
    & = \sum_{k=1}^n \deriv{(z_i \circ h)}{z_k}(\tilde{x}) w_k.
\end{split}
\end{equation*}
Define $A : G \to \GL_n(\mathbb{C})$ by
\begin{equation*}
    A(h) = \left ( \deriv{(z_i \circ h)}{z_j}(\tilde{x}) \right )_{1 \leq i,j \leq n}.
\end{equation*}
By the Chain Rule and the fact that $G$ fixes $\tilde{x}$, $A$ is a group homomorphism. Moreover, $A$ is injective by Lemma 2.10 of \cite{MM}. By our computations from above, we can write
\begin{equation*} \tag{\textasteriskcentered} \label{eq: coord}
    w \circ h = A(h) \circ w.
\end{equation*}

We can find a sufficiently small connected neighborhood $U' \subseteq U$ of $\tilde{x} \in U$ such that $w$ is an injective immersion on $U'$ and $G_{U'} = G$. Set $V = w(U')$ and define $\lambda : V \to U'$ by $\lambda = (w_{\mid U'})^{-1}$. Inverting the equation \eqref{eq: coord}, we see that
\begin{equation*}
    \lambda \circ A(h^{-1}) = h^{-1} \circ \lambda
\end{equation*}
for any $h \in G$. Then, $\lambda : (V,G, \varphi \circ \lambda) \to (U,G,\varphi)$ is a holomorphic embedding between orbifold charts.

\end{proof}

\begin{lem} 
\label{lem: orbifold vector bundle trivialization}
Let $\mathcal{E}$ be a holomorphic vector bundle over a complex orbifold $\mathcal{X} = (X,\mathcal{U})$. Let $x_0 \in X$. Then, there exists an orbifold chart $(U, G, \varphi) \in \mathcal{U}$, a trivialization $E_U \cong U \times \mathbb{C}^r$, a linear action of $G$ on $U$ and an action of $G$ on $\mathbb{C}^r$ such that $\varphi(0) = x_0$ and
\begin{equation*}
\rho_U(g^{-1})( (\tilde{x},v) ) = (g \cdot \tilde{x}, g \cdot v)
\end{equation*}
for all $g \in G$, $\tilde{x} \in U$ and $v \in \mathbb{C}^r$.
\end{lem}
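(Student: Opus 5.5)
Start with \Cref{lem: orbifold chart}. It gives an orbifold chart $(U,G,\varphi)$ with $G=G_{x_0}$, $0\in U$, $\varphi(0)=x_0$, and $G$ acting linearly on $U$. Shrink $U$ to a small $G$-invariant ball (or polydisk) around $0$, which is possible because the action is linear and we may average a Hermitian form to make it $G$-unitary. Then $U$ is contractible and Stein, so $E_U$ is holomorphically trivial on $U$. The task is to upgrade an arbitrary trivialization to a $G$-equivariant one in which the lifted action is constant in $\tilde x$.

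Fix the fiber $V:=E_{U}|_0\cong\mathbb{C}^r$. Since every $g\in G$ fixes $0$, the bundle action restricts to a linear action on $V$. By condition (1) in the definition of orbifold vector bundles, $\rho_U(g)$ maps $V$ to itself. Define $g\cdot v:=\rho_U(g^{-1})v$ on $V$; because $\rho_U$ is an anti-homomorphism, this is a genuine representation. Choose any holomorphic trivialization $\Psi:U\times V\to E_U$ that restricts to the identity on the fiber over $0$. Then average it:
\[
\Psi^{\av}(\tilde x,v):=\frac{1}{|G|}\sum_{g\in G}\rho_U(g)\,\Psi\bigl(g\cdot\tilde x,\,g\cdot v\bigr).
\]
Here $\rho_U(g)$ sends the fiber over $g\cdot\tilde x$ to the fiber over $\tilde x$. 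This map is holomorphic, because each $\rho_U(g)$ is a holomorphic bundle map and the actions on $U$ and $V$ are linear. It is fiberwise linear. At $\tilde x=0$ it equals the identity, since each summand is $\rho_U(g)\rho_U(g^{-1})v=v$. Hence, after shrinking $U$ again to a smaller $G$-invariant ball, $\Psi^{\av}$ is an isomorphism on every fiber, and so it is a holomorphic trivialization.

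Next, verify equivariance. For $h\in G$, reindex the sum by $g\mapsto gh$ (or $hg$, depending on conventions) and use the anti-homomorphism property $\rho_U(gh)=\rho_U(h)\rho_U(g)$. This gives
\[
\rho_U(h^{-1})\,\Psi^{\av}(\tilde x,v)=\Psi^{\av}(h\cdot\tilde x,\,h\cdot v).
\]
In the coordinates given by $\Psi^{\av}$, this says exactly $\rho_U(h^{-1})(\tilde x,v)=(h\cdot\tilde x,h\cdot v)$, which is the claimed formula.

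The main obstacle is bookkeeping rather than substance. Condition (1) says $\rho_U(g)$ sends the fiber over $x$ to the fiber over $g^{-1}x$, and $\rho_U$ is an anti-homomorphism. The averaging formula and the direction of the reindexing must be chosen consistently with these two facts so that the identity holds exactly, not up to an inverse. I would fix conventions by checking the case $\tilde x=0$ first.
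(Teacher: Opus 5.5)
Your proposal is correct and is essentially the paper's argument. Both average an arbitrary holomorphic trivialization over $G$ against the isotropy representation on the fiber over $0$, note that the result is the identity at $0$ so it stays an isomorphism after shrinking, and verify equivariance by reindexing the sum. The paper averages the old-to-new coordinate change $\frac{1}{|G|}\sum_{g}\tilde\rho_{g^{-1}}(0)\tilde\rho_g(\tilde x)$, whereas you average the frame $\Psi$ in the new-to-old direction; the appeal to Stein/contractibility is unnecessary, since a $G$-invariant ball inside any trivializing neighborhood of $0$ suffices.
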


\begin{proof}

By \Cref{lem: orbifold chart}, there exists an orbifold chart $(U,G,\varphi) \in \mathcal{U}$ such that $G = G_{x_0}$, $\varphi(0) = x_0$, and $G$ acts linearly on $U$. By replacing $U$ with a smaller neighborhood of the origin, if necessary, we may assume that the holomorphic vector bundle $E_U$ is the trivial bundle. Then, for any $g \in G$, there exists a matrix valued function $\tilde{\rho}_g : U \to \GL(r,\mathbb{C})$ such that
\begin{equation*}
\rho_U(g^{-1}) (\tilde{x},v) = (g \cdot \tilde{x}, \tilde{\rho}_g(\tilde{x}) v)
\end{equation*}
for all $g \in G$ and $(\tilde{x},v) \in E_U \cong U \times \mathbb{C}^r$. Note that $(0,0)$ is a fixed point for the action of $G$ on $E_U$.

We observe that 
\begin{equation*}
\frac{1}{|G|} \sum_{g \in G} \sum_{j=1}^n \tilde{\rho}_{g^{-1}} (0) \tilde{\rho}_g(\tilde{x})
\end{equation*}
is an isomorphism for all $\tilde{x}$ sufficiently near $0$. After replacing $U$ with a smaller neighborhood of $0$ if necessary, we define a trivialization of $E_U$ by
\begin{equation*}
(\tilde{x}, u(\tilde{x},v))  =  \left ( \tilde{x}, \quad  \frac{1}{|G|} \sum_{g \in G} \sum_{j=1}^n \tilde{\rho}_{g^{-1}} (0) \tilde{\rho}_g(\tilde{x}) v  \right ).
\end{equation*}
For any $h \in G$, $\tilde{x} \in U$, and $v \in \mathbb{C}^r$,  we observe that
\begin{equation*}
\begin{split}
\left(h \cdot \tilde{x},  u(h \cdot \tilde{x}, \tilde{\rho}_h( \tilde{x}) v) \right) & = \left (h \cdot \tilde{x},\quad \frac{1}{|G|} \sum_{g \in G} \sum_{j=1}^n \tilde{\rho}_{g^{-1}} (0) \tilde{\rho}_g(h \cdot \tilde{x}) \tilde{\rho}_h( \tilde{x}) v  \right ) \\
    & = \left( h \cdot \tilde{x}, \quad \frac{1}{|G|} \sum_{g \in G} \sum_{j=1}^n \tilde{\rho}_{g^{-1}} (0) \tilde{\rho}_{gh}(\tilde{x}) v  \right) \\
    & = \left ( h \cdot \tilde{x}, \quad \frac{1}{|G|} \sum_{g \in G} \sum_{j=1}^n \tilde{\rho}_{hg^{-1}} (0) \tilde{\rho}_{g}( \tilde{x}) v  \right ) \\
    & =  \left (h \cdot  \tilde{x}, \quad \tilde{\rho}_{h} (0)  \left ( \frac{1}{|G|} \sum_{g \in G} \sum_{j=1}^n \tilde{\rho}_{g^{-1}} (0) \tilde{\rho}_{g}(\tilde{x}) v \right )  \right ) \\
    & =  ( h \cdot  \tilde{x},  \tilde{\rho}_h(0) u(\tilde{x}, v)  ).
\end{split}
\end{equation*}
In other words,
\begin{equation*}
\rho_U(h^{-1})( \tilde{x}, u( \tilde{x}, v)) = (h \cdot \tilde{x},  \tilde{\rho}_h(0) u( \tilde{x}, v)).
\end{equation*}

\end{proof}

\section{Existence of the Weights} \label{Schur}

Let $d$ be a positive integer and let $\zeta$ be a primitive $d$-th root of unity. Let $p$ be a positive integer. In view of \Cref{thm: main}, we seek a set $\{c_{ij}\}_{i,j \geq 0}$ of nonnegative constants, only finitely many of which are nonzero, such that the polynomial
\begin{equation} \label{eq: Schur}
\sum_{i,j \geq 0} c_{ij} \left ( \sum_{\mu \in (\mathbb{Z}_{\geq 0})^2, |\mu|=i} {T_1}^{\mu_1} {T_2}^{\mu_2} \right ){T_3}^j
\end{equation} 
has total order $p$ at $(T_1,T_2,T_3) = (\zeta^{m_1}, \zeta^{m_2}, \zeta^{m_1+m_2})$ for all $m_1$,$m_2$ $\in \mathbb{Z}$ such that $(\zeta^{m_1},\zeta^{m_2})$ does not equal $(1,1)$.

\begin{lem} \label{lem: Schur in 2 vars}
    Let $S_{(\lambda_1,\lambda_2)}(z,w)$ denote the Schur polynomial associated to the partition $(\lambda_1, \lambda_2)$ in two variables $z$ and $w$. Then,
    \begin{equation*}
        S_{(\lambda_1,\lambda_2)}(z,w) = z^{\lambda_1}w^{\lambda_2} + \dotsb + z^{\lambda_2}w^{\lambda_1}.
    \end{equation*}
\end{lem}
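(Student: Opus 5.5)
We need to prove that the two-variable Schur polynomial $S_{(\lambda_1,\lambda_2)}(z,w)$ equals $\sum_{t=0}^{\lambda_1-\lambda_2} z^{\lambda_1-t}w^{\lambda_2+t}$, i.e.\ $z^{\lambda_1}w^{\lambda_2}+z^{\lambda_1-1}w^{\lambda_2+1}+\dotsb+z^{\lambda_2}w^{\lambda_1}$. The plan is to start from the bialternant (Jacobi) definition, which for $\lambda_1 \geq \lambda_2 \geq 0$ reads
\begin{equation*}
S_{(\lambda_1,\lambda_2)}(z,w) = \frac{\det\begin{pmatrix} z^{\lambda_1+1} & w^{\lambda_1+1} \\ z^{\lambda_2} & w^{\lambda_2}\end{pmatrix}}{\det\begin{pmatrix} z & w \\ 1 & 1\end{pmatrix}} = \frac{z^{\lambda_1+1}w^{\lambda_2} - z^{\lambda_2}w^{\lambda_1+1}}{z-w}.
\end{equation*}

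The numerator factors as $(zw)^{\lambda_2}\bigl(z^{m+1}-w^{m+1}\bigr)$ with $m=\lambda_1-\lambda_2\geq 0$. Applying the geometric-sum identity $z^{m+1}-w^{m+1}=(z-w)\sum_{t=0}^{m} z^{m-t}w^{t}$ and cancelling $z-w$ gives
\begin{equation*}
S_{(\lambda_1,\lambda_2)}(z,w)=(zw)^{\lambda_2}\sum_{t=0}^{m} z^{m-t}w^{t}=\sum_{t=0}^{\lambda_1-\lambda_2} z^{\lambda_1-t}w^{\lambda_2+t},
\end{equation*}
which is the stated formula. The cancellation is legitimate as an identity in $\mathbb{Z}[z,w]$ because the factorization is polynomial, so nothing needs checking on the locus $z=w$.

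As a cross-check, one can instead use the combinatorial definition. For two rows, a semistandard Young tableau of shape $(\lambda_1,\lambda_2)$ with entries in $\{1,2\}$ must have its first $\lambda_2$ columns equal to $\binom{1}{2}$, contributing $(zw)^{\lambda_2}$. The remaining $\lambda_1-\lambda_2$ boxes of row one are a weakly increasing word in $1,2$, and each such word corresponds to exactly one monomial $z^{m-t}w^{t}$.

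There is no real obstacle here. The only care needed is fixing the convention that $\lambda_1\geq\lambda_2\geq 0$, and noting that the dots in the statement denote all intermediate monomials, each appearing with coefficient one.
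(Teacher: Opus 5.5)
Your proof is correct, but it takes a different route from the paper.

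The paper starts from the Jacobi--Trudi identity $S_{(\lambda_1,\lambda_2)} = H_{\lambda_1}H_{\lambda_2} - H_{\lambda_1+1}H_{\lambda_2-1}$, written in terms of complete homogeneous polynomials. It dehomogenizes with $T = w/z$, so that $z^{-\lambda}H_\lambda = 1 + T + \dotsb + T^\lambda$. It then telescopes the difference of two products of geometric sums down to $T^{\lambda_2} + \dotsb + T^{\lambda_1}$.

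You instead start from the bialternant formula. There the whole computation reduces to one factorization, $z^{\lambda_1+1}w^{\lambda_2} - z^{\lambda_2}w^{\lambda_1+1} = (zw)^{\lambda_2}(z^{m+1}-w^{m+1})$, followed by one geometric-sum division. This is carried out in $\mathbb{Z}[z,w]$, so you never invert $z$. You also never need the implicit convention $H_{-1}=0$ that the paper's formula uses when $\lambda_2 = 0$.

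A side benefit of your route is that it lands directly on the factored form $S_{(\lambda_1,\lambda_2)} = (zw)^{\lambda_2}\sum_{|\mu| = \lambda_1-\lambda_2} z^{\mu_1}w^{\mu_2}$. This is exactly the form the paper invokes right after the lemma, to match Schur expansions of $q(z,w)$ with polynomials of the shape in the main theorem's hypothesis. The paper's route, for its part, stays in the language of the $H_\lambda$, which are precisely the inner sums $\sum_{|\mu|=i}T_1^{\mu_1}T_2^{\mu_2}$ used there.

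Your tableau cross-check is a third valid proof. Since the bialternant, Jacobi--Trudi and tableau descriptions are classically equivalent, it does not matter which one you take as the definition.
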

\begin{proof}

We write $T = \frac{w}{z}$ so that, for any $\lambda \geq 0$,
\begin{equation*}
    \frac{1}{z^\lambda}H_\lambda(z,w) = 1 + T + \dotsb + T^\lambda.
\end{equation*}
By the Jacobi-Trudy identity,
\begin{equation*}
    S_{(\lambda_1,\lambda_2)}(z,w) = H_{\lambda_1}(z,w)H_{\lambda_2}(z,w) - H_{\lambda_1+1}(z,w)H_{\lambda_2-1}(z,w).
\end{equation*}
So, we see that
\begin{equation*}
    \begin{split}
        & \frac{1}{z^{\lambda_1 + \lambda_2}} S_{(\lambda_1,\lambda_2)}(z,w) \\
        & =(1 + T + \dotsb + T^{\lambda_1})(1 + T + \dotsb + T^{\lambda_2}) - (1 + T + \dotsb + T^{\lambda_1 + 1}) (1 + T + \dotsb + T^{\lambda_2 - 1}) \\
        & = \left ( \sum_{i=0}^{\lambda_2} T^i + \dotsb + T^{\lambda_1 + i} \right ) - \left ( \sum_{j=0}^{\lambda_2 - 1} T^j + \dotsb + T^{\lambda_1 + 1 + j} \right ) \\
        & = T^{\lambda_2} + \dotsb + T^{\lambda_1 + \lambda_2} - \sum_{j=0}^{\lambda_2 -1} T^{\lambda_1 + 1 + j} \\
        & = T^{\lambda_2} + \dotsb + T^{\lambda_1}.
    \end{split}
\end{equation*}
    
\end{proof}

The polynomial 
\begin{equation*}
q(z,w) = (1 + z + \dotsb + z^d)^p (1 + w + \dotsb + w^d)^p
\end{equation*}
has total order $p$ at $(z,w) = (\zeta^{m_1}, \zeta^{m_2})$ for all $m_1$,$m_2$ $\in \mathbb{Z}$ such that $(\zeta^{m_1},\zeta^{m_2})$ does not equal $(1,1)$. By \Cref{lem: Schur in 2 vars},
\begin{equation*}
S_{(\lambda_1,\lambda_2)}(z,w) = \left ( \sum_{\mu \in (\mathbb{Z}_{\geq 0})^2, |\mu|= \lambda_1 - \lambda_2 } z^{\mu_1} w^{\mu_2} \right ) (zw)^{\lambda_2},
\end{equation*}
for any partition $(\lambda_1, \lambda_2)$. As a result, $q(z,w)$ is Schur positive if and only if $q(z,w) = Q(z,w,zw)$, where $Q(T_1,T_2,T_3)$ is a polynomial of the form \eqref{eq: Schur} (to see this note that since $q(z,w)$ is a symmetric polynomial in two variables, it is sufficient to only consider Schur polynomials indexed by partitions with at most $2$ parts). Note that, if $q(z,w) = Q(z,w,zw)$, then $Q$ has total order $p$ at $(T_1,T_2,T_3) = (\zeta^{m_1}, \zeta^{m_2}, \zeta^{m_1+m_2})$ for all $m_1$,$m_2$ $\in \mathbb{Z}$ such that $(\zeta^{m_1},\zeta^{m_2})$ does not equal $(1,1)$. Thus to prove the existence of our desired $\{c_{ij}\}$ it suffices to show that $q(z,w)$ is Schur positive.

\begin{defn}
Let $f(T) = a_0 + \dotsb + a_d T^d$ be a degree $d$ polynomial with nonnegative real coefficients. We say that $f(T)$ is \emph{log-concave} if ${a_i}^2 \geq a_{i-1}a_{i+1}$ for all $i = 1, \dotsc, d-1$. We say that $f(T)$ \emph{has no internal zeros} if there do not exist integers $0 \leq i < j < k \leq d$ satisfying $a_i \neq 0$, $a_j = 0$, and $a_k \neq 0$. 
\end{defn}

\begin{lem} \textnormal{(c.f. \cite{Stan}, Proposition $1$)} \label{lem: Log-concave and Schur}
    Let $f(T) \in \mathbb{R}[T]$ be a real polynomial, so $p(z,w) := f(z)f(w)$ is a symmetric polynomial in the two variables $z$ and $w$. Then $p(z,w)$ is Schur positive if and only if $f(T)$ is log-concave and has no internal zeros.
\end{lem}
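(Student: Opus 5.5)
The plan is to expand $p(z,w)=f(z)f(w)$ in the Schur basis of two-variable symmetric polynomials explicitly, using \Cref{lem: Schur in 2 vars}, and then to read off positivity of the coefficients. Write $f(T)=\sum_{i=0}^d a_iT^i$, so that
\[
p(z,w)=\sum_{i,j} a_ia_j z^iw^j .
\]
The coefficient of $z^{\lambda_1}w^{\lambda_2}$ with $\lambda_1\ge\lambda_2$ is $a_{\lambda_1}a_{\lambda_2}$. By \Cref{lem: Schur in 2 vars}, each $S_{(\lambda_1,\lambda_2)}$ contains every monomial $z^{a}w^{b}$ with $a+b=\lambda_1+\lambda_2$ and $\lambda_2\le b\le \lambda_1$, each with coefficient one. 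Triangularity therefore gives the expansion $p=\sum_{\lambda_1\ge \lambda_2} c_\lambda S_\lambda$ with
\[
c_{(\lambda_1,\lambda_2)} = a_{\lambda_1}a_{\lambda_2}-a_{\lambda_1+1}a_{\lambda_2-1},
\]
where $a_{-1}=0$. This is exactly the Jacobi--Trudy-type difference already used in the proof of that lemma, and the check is the telescoping identity $\sum_{t\ge 0} c_{(\lambda_1+t,\lambda_2-t)}=a_{\lambda_1}a_{\lambda_2}$. So Schur positivity is equivalent to $a_{\lambda_1}a_{\lambda_2}\ge a_{\lambda_1+1}a_{\lambda_2-1}$ for all $\lambda_1\ge\lambda_2\ge 1$.

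For the backward direction, assume $f$ is log-concave with no internal zeros and nonnegative coefficients; the paper's definition of log-concavity presupposes nonnegative coefficients. On the support interval $[s,t]$ all $a_i>0$, so the ratios $a_{i+1}/a_i$ are nonincreasing there. For $\lambda_2\le\lambda_1$ with $\lambda_2-1$ and $\lambda_1+1$ both in the support, chaining the ratios gives
\[
\frac{a_{\lambda_1+1}}{a_{\lambda_1}}\le \frac{a_{\lambda_2}}{a_{\lambda_2-1}},
\]
which is the required inequality. If either index falls outside the support, the subtracted term vanishes and $c_\lambda\ge 0$ trivially.

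For the forward direction, the case $\lambda_1=\lambda_2=i$ gives $a_i^2\ge a_{i+1}a_{i-1}$, which is log-concavity. The coefficients are nonnegative because $c_{(i,0)}=a_ia_0$ and $c_{(i,i)}$ together with the top coefficient force signs. Here care is needed: for a genuinely real $f$ one may have to replace $f$ by $-f$, and I would state and handle that convention explicitly. For no internal zeros, suppose $a_i\ne0$, $a_j=0$, $a_k\ne0$ with $i<j<k$, and choose $j$ so that $a_{j-1}\ne 0$ and some later coefficient is nonzero. Taking $\lambda_1=j$ and $\lambda_2=j'$ suitably gives $c=0\cdot a_{j'}-a_{j+1}a_{j'-1}$; I would pick $\lambda_1\ge\lambda_2$ with $a_{\lambda_1}a_{\lambda_2}=0$ but $a_{\lambda_1+1}a_{\lambda_2-1}>0$. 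For instance, take $\lambda_2=j$, $\lambda_1=k-1\ge j$ with $k$ the first nonzero index after $j$, and $i=j-1$. Then $c=a_{k-1}a_j-a_ka_{j-1}<0$.

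The main obstacle is this last choice of indices together with the sign bookkeeping for a general real $f$. The rest is routine telescoping.
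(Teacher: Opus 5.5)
Your proposal is correct and follows essentially the same route as the paper: the same Schur coefficients $a_{\lambda_1}a_{\lambda_2}-a_{\lambda_1+1}a_{\lambda_2-1}$ (you justify them by triangularity and telescoping, while the paper just asserts the decomposition), the same ratio-chaining for the converse, and the same index choice $(\lambda_1,\lambda_2)=(k-1,j)$ to rule out internal zeros. The paper silently assumes the coefficients are nonnegative; your sign caveat is settled cleanly by $c_{(d,\ell)}=a_d a_\ell$ for $0\le \ell\le d$ (since $a_{d+1}=0$), so every coefficient has the sign of $a_d$ or is zero, and you may replace $f$ by $-f$.
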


\begin{proof}

We write
\begin{equation*}
    f(T) = a_0 + a_1T + \dotsb + a_dT^d.        
\end{equation*}
Then,
\begin{equation*}
    p(z,w) = \left ( a_0 + a_1z + \dotsb + a_dz^d \right ) \left ( a_0 + a_1w + \dotsb + a_dw^d \right ) = \sum_{r=0}^{2d} \sum_{i+j = r} a_ia_j z^iw^j.
\end{equation*}
We will show that $\sum_{i+j = r} a_ia_j z^i w^j$ is Schur positive for all $r = 0, \dotsc, 2d$ if and only if $f(T)$ is log-concave and has no internal zeros.

Let $r \in \{0, \dotsc, 2d\}$ and set $m = \lfloor r/2 \rfloor$. Since $p(z,w)$ is symmetric in $z$ and $w$, we can write
\begin{equation*}
\sum_{i+j = r} a_ia_j z^i w^j = \sum_{\ell=0}^{m} (a_\ell a_{r-\ell} - a_{\ell-1}a_{r-\ell+1}) (z^\ell w^{r-\ell} + \dotsb + z^{r-\ell} w^\ell),
\end{equation*}
where $a_{-1} = 0$. Additionally,
\begin{equation*}
\sum_{\ell=0}^{m} (a_\ell a_{r-\ell} - a_{\ell-1}a_{r-\ell+1}) (z^{r-\ell} w^{\ell} + \dotsb + z^{\ell} w^{r-\ell}) = \sum_{\ell=0}^{m} (a_\ell a_{r-\ell} - a_{\ell-1}a_{r-\ell+1}) S_{(r-\ell, \ell)}
\end{equation*}
by \Cref{lem: Schur in 2 vars}. Consequently, $p(z,w)$ is Schur positive if and only if $a_ia_j \geq a_{i-1}a_{j+1}$ whenever $i \leq j$. We claim that $a_ia_j \geq a_{i-1}a_{j+1}$ whenever $i \leq j$ if and only if $f(T)$ is log-concave with no internal zeros.

Suppose that $a_i a_j \geq a_{i-1} a_{j+1}$ whenever $0 \leq i \leq j \leq 2d$. In particular, ${a_i}^2 \geq a_{i-1} a_{i+1}$ for all $1 \leq i \leq 2d-1$. Suppose for contradiction that $f(T)$ has an internal zero. Then there exists $0 \leq i < j \leq 2d$ such that $a_i \neq 0$ and $a_j \neq 0$ but $a_k = 0$ for all $i < k < j$. Then, 
\begin{equation*}
0 = a_{i+1}a_{j-1} \geq a_ia_j,
\end{equation*}
which contradicts the fact that $f(T)$ has nonnegative coefficients.

Conversely, suppose that $f(T)$ is log-concave with no internal zeros. Let $i, j \in \{0, \dotsc, 2d\}$ be two elements such that $i \leq j$. If either $a_{i-1} = 0$ or $a_{j+1} = 0$, then $a_ia_j \geq a_{i-1} a_{j+1} = 0$. Otherwise, $a_{i-1} \neq 0$ and $a_{j+1} \neq 0$. Then, the ratio $\frac{a_k}{a_{k+1}}$ is well-defined for all $k = i-1, \dotsc, j$ because $f(T)$ has no internal zeros and $\frac{a_j}{a_{j+1}} \geq \frac{a_{i-1}}{a_{i}}$ because $f(T)$ is log-concave.

\end{proof}

\begin{prop}\label{prop:explicitconstants}
The polynomial 
\begin{equation*}
q(z,w) = (1+z+ \dotsb + z^d)^p(1 + w + \dotsb + w^d)^p
\end{equation*}
is Schur-positive. In other words, there exist nonnegative constants $\{c_{ij}\}_{i,j \geq 0}$ (and only finitely many of them are nonzero) with
\begin{equation*}
q(z,w) = \sum_{i,j \geq 0} c_{ij} S_{(i+j,j)}.
\end{equation*}
In fact, 
\begin{equation*}
c_{ij} =a_{i+j} a_j - a_{i+j+1}a_{j-1} 
\end{equation*}
where the $a_i$ are defined by
$$\sum_i a_i t^i = (1+ t + \cdots t^d)^p.$$
\end{prop}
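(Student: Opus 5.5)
By \Cref{lem: Log-concave and Schur}, it suffices to show that $f(T) = (1+T+\dotsb+T^d)^p$ is log-concave with no internal zeros. The explicit formula for the $c_{ij}$ will then be read off from the decomposition computed in the proof of that lemma.

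\emph{No internal zeros.} Every coefficient $a_i$ of $f$ with $0 \le i \le pd$ is strictly positive. Indeed, expanding the product, $a_i$ counts the ways of writing $i$ as an ordered sum of $p$ integers in $\{0,\dots,d\}$, and there is at least one such way whenever $0\le i\le pd$.

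\emph{Log-concavity.} I would argue by induction on $p$, using the classical fact that the product of two log-concave polynomials with nonnegative coefficients and no internal zeros is again log-concave with no internal zeros. This is the discrete analogue of Ibragimov's theorem (the convolution of log-concave sequences is log-concave). The base case $1+T+\dotsb+T^d$ has all coefficients equal to $1$, so it is trivially log-concave.

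For the product step, let $b$ and $c$ be the two coefficient sequences. By the same argument used in the converse part of the proof of \Cref{lem: Log-concave and Schur}, log-concavity together with the absence of internal zeros gives $b_ib_j \ge b_{i-1}b_{j+1}$ for all $i\le j$, and likewise for $c$. Equivalently, the infinite Toeplitz matrices $(b_{j-i})$ and $(c_{j-i})$ are totally positive of order $2$. The Cauchy--Binet formula then shows that their product, which is the Toeplitz matrix of the convolution, is also totally positive of order $2$; this is exactly log-concavity of the convolution. This step is the only nontrivial one, and I would cite it as standard rather than reprove it.

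\emph{Extracting the $c_{ij}$.} Following the proof of \Cref{lem: Log-concave and Schur}, in degree $r$ the coefficient of $S_{(r-\ell,\ell)}$ is $a_\ell a_{r-\ell}-a_{\ell-1}a_{r-\ell+1}$. The target expansion is indexed by $S_{(i+j,j)}$, so I set $\ell=j$ and $r-\ell=i+j$. This gives
\[
c_{ij} = a_{i+j}a_j - a_{i+j+1}a_{j-1},
\]
which is nonnegative by the inequality established above with the indices $j\le i+j$. Finitely many of the $c_{ij}$ are nonzero, since $a_m=0$ for $m>pd$.
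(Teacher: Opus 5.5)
Your proof is correct, but the key step differs from the paper's. The paper applies \Cref{lem: Log-concave and Schur} only to the one-factor polynomial $(1+z+\dotsb+z^d)(1+w+\dotsb+w^d)$, whose coefficients are all $1$. It then uses the fact that products of Schur-positive symmetric polynomials are Schur positive (Littlewood--Richardson; in two variables this reduces to $S_{(a,0)}S_{(b,0)}=\sum_{k=0}^{\min(a,b)}S_{(a+b-k,k)}$). Afterwards it computes the $c_{ij}$ by the same telescoping decomposition you use. Nonnegativity of the explicit $c_{ij}$ therefore comes from uniqueness of the expansion in the Schur basis, and log-concavity of the $a_k$ is never an input; it can only be recovered afterwards from the ``only if'' direction of the lemma.

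You instead prove directly that $(1+T+\dotsb+T^d)^p$ is log-concave with no internal zeros, using closure of order-$2$ P\'olya frequency sequences under convolution (Cauchy--Binet on $2\times 2$ minors of Toeplitz matrices). You then apply the lemma once, to $q$ itself. Your route replaces Littlewood--Richardson positivity with an elementary determinant identity. It also reads off nonnegativity of $c_{ij}=a_{i+j}a_j-a_{i+j+1}a_{j-1}$ directly as the inequality $a_ja_{i+j}\ge a_{j-1}a_{i+j+1}$, which supplies the missing justification for the paper's remark that ``the $a_i$ are log-concave.'' The paper's route, by contrast, needs no information at all about the coefficients of the $p$-th power.

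One step you pass over quickly is the ``equivalently'' with total positivity of order $2$. The $2\times 2$ minors of the factor Toeplitz matrices also include the inequalities $b_xb_y\ge b_{x-s}b_{y+s}$ with $s\ge 2$ and $x<y+s$. These follow from the $s=1$ case by chaining the ratios $b_k/b_{k-1}$ across the support, using the absence of internal zeros. This is standard and does not affect correctness.
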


\begin{rem}
The $c_{ij}$ are non-negative since the $a_{i}$ are log-concave.
\end{rem}


\begin{proof}

Since $f(T) = 1 + T + \dotsb + T^d$ is log-concave with no internal zeros, \Cref{lem: Log-concave and Schur} implies that 
\begin{equation*}
(1+z+ \dotsb + z^d)(1 + w + \dotsb + w^d)
\end{equation*}
is Schur positive. As products of Schur positive polynomials are Schur positive, $q(z,w)$ is Schur positive.

Next, we compute the coefficients of $q(z,w)$. We observe that
\begin{equation*}
\begin{split}
(1 + z + \dotsb + z^d)^p & = \frac{(1-z^{d+1})^p}{(1-z)^p} \\
& = \left ( \sum_{k=0}^p (-1)^k \binom{p}{k} z^{(d+1)k} \right ) \left ( \sum_{k = 0}^\infty \binom{k+p-1}{p-1} z^k \right ) \\
& = \sum_{k=0}^{dp} \left ( \sum_{\ell = 0}^{\lfloor k/(d+1) \rfloor } (-1)^\ell \binom{p}{\ell} \binom{k +p -  (d+1)\ell - 1 }{p-1}\right ) z^k.
\end{split}
\end{equation*}
Set
\begin{equation*}
a_k = 
\begin{cases}
\sum_{\ell = 0}^{\lfloor k/(d+1) \rfloor } (-1)^\ell \binom{p}{\ell} \binom{k +p -  (d+1)\ell - 1 }{p-1} & \text{if $0 \leq k \leq dp$ and} \\
0 & \text{otherwise.}
\end{cases}
\end{equation*}
Then,
\begin{equation*}
\begin{split}
(1 + z + \dotsb + z^d)^p (1 + w + \dotsb + w^d)^p & = \left ( \sum_{k} a_k z^k \right ) \left ( \sum_k a_k y^k \right ) \\
& = \sum_{r = 0}^{\infty} \sum_{i+j = r} a_i a_j z^i y^j.
\end{split}
\end{equation*}
For any $r \geq 0$, we can use \Cref{lem: Schur in 2 vars} to write
\begin{equation*}
\sum_{i+j = r} a_i a_j z^i y^j = \sum_{i = \lceil r/2 \rceil}^{r} (a_ia_{r-i} - a_{i+1} a_{r-i-1}) S_{(i, r-i)}. 
\end{equation*}
As a result,
\begin{equation*}
\begin{split}
\sum_{r = 0}^{\infty} \sum_{i+j = r} a_i a_j z^i y^j & = \sum_{r = 0}^{\infty} \sum_{i = \lceil r/2 \rceil}^{r} (a_ia_{r-i} - a_{i+1} a_{r-i-1}) S_{(i, r-i)} \\
& = \sum_{i,j \geq 0} (a_{i+j}a_j - a_{i+j+1}a_{j-1}) S_{(i+j,j)}.
\end{split}
\end{equation*}
Substituting $a_k = 0$ for all $k < 0$ and $k > dp$, we see that
\begin{equation*}
\begin{split}
q(z,w) = & \sum_{i =0}^{dp - 1} a_i a_0 S_{(i,0)} \\ 
& + \sum_{i + j = dp}  a_{dp} a_{j} S_{(dp,j)} \\
& + \sum_{i+j < dp, j > 0} (a_{i+j} a_{j} - a_{i+j+1}a_{j-1}) S_{(i+j,j)}.
\end{split}
\end{equation*}

\end{proof}

\newpage

\bibliographystyle{habbrv}

\bibliography{refs}

\end{document}